\def \xcirc{\objectmargin{0.1pc}\def\objectstyle{\sssize}\diagram
\squarify<1pt>{}\circled\enddiagram}
\newtheorem{theorem}{Theorem}[section]
\newtheorem{lemma}[theorem]{Lemma}
\newtheorem{proposition}[theorem]{Proposition}
\newtheorem{corollary}[theorem]{Corollary}
\theoremstyle{definition}
\newtheorem{notations}[theorem]{Notations}
\theoremstyle{remark}
\newtheorem{remark}[theorem]{Remark}
\DeclareMathOperator{\BC}{\mathsf{BC}}
\DeclareMathOperator{\BN}{\mathsf{BN}}
\DeclareMathOperator{\BP}{\mathsf{BP}}
\DeclareMathOperator{\B}{\mathsf{B}}
\DeclareMathOperator{\Ho}{\mathsf{H}}
\DeclareMathOperator{\HH}{\mathsf{HH}}
\DeclareMathOperator{\HC}{\mathsf{HC}}
\DeclareMathOperator{\HP}{\mathsf{HP}}
\DeclareMathOperator{\HN}{\mathsf{HN}}
\DeclareMathOperator{\ima}{\mathsf{Im}}
\DeclareMathOperator{\ide}{\mathsf{id}}
\DeclareMathOperator{\Tot}{\mathsf{Tot}}
\newcommand{\ot}{\otimes}
\newcommand{\sub}{\subseteq}
\newcommand{\wt}{\widetilde}
\newcommand{\wh}{\widehat}
\newcommand{\ov}{\overline}
\newcommand{\ba}{\mathbf a}
\newcommand{\bx}{\mathbf x}
\newcommand{\bb}{\mathbf b}
\newcommand{\bh}{\mathbf h}
\newcommand{\byy}{\mathbf y}
\newcommand{\bz}{\mathbf z}
\newcommand{\de}{\delta}
\newcommand{\ep}{\epsilon}
\newcommand{\si}{\sigma}
\newcommand{\cX}{\mathcal X}
\newcommand{\cY}{\mathcal Y}
\def\xcirc{\objectmargin{0.1pc}\def\objectstyle{\sssize}\diagram
\squarify<1pt>{}\circled\enddiagram}
\begin{document}

\title{Cyclic homology of crossed products}


\author{Graciela Carboni}
\address{} \curraddr{} \email{gcarbo@dm.uba.ar}
\thanks{Supported by  UBACYT 0294}

\author{Jorge A. Guccione}
\address{Departamento de Matem\'atica\\ Facultad de Ciencias Exactas y Naturales, Pabell\'on 1
- Ciudad Universitaria\\ (1428) Buenos Aires, Argentina.} \curraddr{} \email{vander@dm.uba.ar}
\thanks{Supported by UBACYT 0294 and PIP 5617 (CONICET)}

\author{Juan J. Guccione}
\address{Departamento de Matem\'atica\\ Facultad de Ciencias Exactas y Naturales\\
Pabell\'on 1 - Ciudad Universitaria\\ (1428) Buenos Aires, Argentina.} \curraddr{}
\email{jjgucci@dm.uba.ar}
\thanks{Supported by UBACYT 0294 and PIP 5617 (CONICET)}

\begin{abstract} We obtain a mixed complex, simpler that the canonical one, given the
Hochschild, cyclic, negative and periodic homology of a crossed product $E = A\#_f H$, where
$H$ is an arbitrary Hopf algebra and $f$ is a convolution invertible cocycle with values in
$A$. Actually, we work in the more general context of relative cyclic homology. Specifically,
we consider a subalgebra $K$ of $A$ which is stable under the action of $H$, and we find a
mixed complex computing the Hochschild, cyclic, negative and periodic homology of $E$ relative
to $K$. As an application we obtain two spectral sequences converging to the cyclic homology of
$E$ relative to $K$. The first one in the general setting and the second one (which generalizes
those previously found by several authors) when $f$ takes its values in $K$.\end{abstract}

\subjclass[2000]{primary 16E40;secondary 16W30}
\date{}

\dedicatory{}


\maketitle

\section*{Introduction}
Let $G$ be a group acting on a differential or algebraic manifold $M$. Then $G$ acts naturally
on the ring $A$ of regular functions of $M$, and the algebra ${}^{G\!} A$ of invariants of this
action consists of the functions that are constants on each of the orbits of $M$. This suggest
to consider ${}^{G\!} A$ as a replacement for $M/G$ in non-commutative geometry. Under suitable
conditions the invariant algebra ${}^{G\!} A$ and the smash product $A\# k[G]$, associated with
the action of $G$ on $A$, are Morita equivalent. Since $K$-theory, Hochschild homology and
cyclic homology are Morita invariant, there is no loss of information if ${}^{G\!} A$ is
replaced by $A\# k[G]$. In the general case the experience had shown that smash products are
better choices than invariants rings for algebras playing the role of noncommutative quotients.
This fact was a motivation for the interest in to develop tools to compute the cyclic homology
of smash products algebras. This problem was considered in \cite{F-T}, \cite{N} and \cite{G-J}.
In the first paper it was obtained a spectral sequence converging to the cyclic homology of the
smash product algebra $A\# k[G]$. In \cite{G-J}, this result was derived from the theory of
paracyclic modules and cylindrical modules developed by the authors. The main tool for this
computation was a version for cylindrical modules of Eilenberg-Zilber theorem. In \cite{A-K}
this theory was used to obtain a Feigin-Tsygan type spectral sequence for smash products $A\#
H$, of a Hopf algebra $H$ with an $H$-module algebra $A$.

At this point it is natural to try to extend this result to the general crossed products $A\#_f
H$ introduced in \cite{B-C-M} and \cite{D-T}. Crossed Products, and more general algebras such
as Hopf Galois extensions, have been homologically studied in several papers (see for instance
\cite{L}, \cite{S}, \cite{G-G} and \cite{J-S}) but almost all of them deal with its Hochschild
(co)homology. In \cite{J-S} the relative to $A$ cyclic homology of a Galois $H$ extension $C/A$
was studied, and the obtained results apply to the Hopf crossed products $A\#_f H$, giving the
absolute cyclic homology when $A$ is a separable algebra. As far as we know, the unique work
dealing with the absolute cyclic homology of a crossed product $A\#_f H$, with $A$ non
separable and $f$ non trivial is \cite{K-R}. In this paper the authors  get a Feigin-Tsygan
type spectral sequence for a crossed products $A\#_f H$, under the hypothesis that $H$ is
cocommutative and $f$ takes values in $k$.

The goal of this article is to present a mixed complex $\bigl(\ov{X},\ov{d},\ov{D}\bigr)$,
simpler than the canonical one, giving the Hochschild, cyclic, negative and periodic homology
of a crossed product $E=A\#_f H$. Under the assumptions of \cite{K-R} our complex is isomorphic
to the one obtain there. Our main result is Theorem~\ref{th3.2}, in which is proved that
$\bigl(\ov{X},\ov{d},\ov{D}\bigr)$ is homotopically equivalent to the canonical normalized
mixed complex $\bigl(E\ot\ov{E}^{\ot^*},b,B\bigr)$.

Actually, we work in the more general context of relative cyclic homology. Specifically, we
consider a subalgebra $K$ of $A$ which is stable under the action of $H$, and we find a mixed
complex computing the Hochschild, cyclic, negative and periodic homology of $E$ relative to $K$
(which we simply call the Hochschild, cyclic, negative and periodic homology of the $K$-algebra
$E$). As an application we obtain two spectral sequences converging to the cyclic homology of
the $K$-algebra $E$. The first one in the general setting and the second one (which generalizes
those of \cite{A-K} and \cite{K-R}) when $f$ takes values in $K$.

Our method of proof is different of the one used in \cite{G-J}, \cite{A-K} and \cite{K-R},
being based in the results obtained in \cite{G-G} and the Perturbation Lemma.

\smallskip

The paper is organized in the following way: in Section~1 we summarize the material on mixed
complexes, perturbation lemma and Hochschild homology of Hopf crossed products necessary for
our purpose. Moreover we set up notation and terminology. For proofs we refer to \cite{C} and
\cite{G-G}. In Section~2 we obtain a mixed complex $\bigl(\wh{X},\wh{d},\wh{D}\bigr)$, more
simpler that the canonical one, giving that Hochschild, cyclic, periodic and negative homology
of the $K$-algebra $E=A\#_f H$, which works without the usual assumption that $f$ is
convolution invertible. Finally in Section~3, we show that when $f$ is convolution invertible,
then $\bigl(\wh{X},\wh{d},\wh{D}\bigr)$ is isomorphic to a simpler mixed complex
$\bigl(\ov{X},\ov{d},\ov{D}\bigr)$. Finally, as an application we derive the above mentioned
spectral sequences.

\section{Preliminaries}
In this section we fix the general terminology and notation used in the following, and give a
brief review of the background necessary for the understanding of this paper.

\smallskip

Let $k$ be a commutative ring, $A$ a $k$-algebra and $H$ a Hopf $k$-algebra. We will use the
Sweedler notation $\Delta(h) = h^{(1)} \ot h^{(2)}$, with the summation understood and
superindices instead of subindices. Recall from \cite{B-C-M} and \cite{D-T} that a {\em weak
action} of $H$ on $A$ is a bilinear map $(h,a)\mapsto a^h$, from $H\times A$ to $A$, such that
for $h\in H$, $a,b\in A$

\begin{enumerate}

\smallskip

\item $(ab)^h = a^{h^{(1)}}b^{h^{(2)}}$,

\smallskip

\item $1^h = \ep(h)1$,

\smallskip

\item $a^1 = a$.

\smallskip

\end{enumerate}

Given a weak action of $H$ on $A$ and a $k$-linear map $f\:H\ot H\to A$, we let $A\#_f H$
denote the $k$-algebra (in general non-associative and without $1$) with underlying $k$-module
$A\ot H$ and multiplication map
$$
(a\ot h)(b \ot l) =  a b^{h^{(1)}}f(h^{(2)},l^{(1)}) \ot h^{(3)}l^{(2)},
$$
for all $a,b\in A$, $h,l \in H$. The element $a\ot h$ of $A\#_f H$ will usually be written $a\#
h$ to remind us $H$ is weakly acting on $A$. The algebra $A\#_f H$ is called a {\it crossed
product} if it is associative with $1\# 1$ as identity element. It is easy to check that this
happens if and only if $f$ and the weak action satisfy the following conditions:

\begin{enumerate}

\smallskip

\item[(i)] (Normality of $f$) for all $h\in H$, we have $f(h,1) = f(1,h) = \ep(h)1_A$,

\smallskip

\item[(ii)] (Cocycle condition) for all $h,l,m\in H$, we have
$$
f\bigl(l^{(1)},m^{(1)}\bigr)^{h^{(1)}} f\bigl(h^{(2)}, l^{(2)}m^{(2)}\bigr) =
f\bigl(h^{(1)},l^{(1)}\bigr) f\bigl(h^{(2)}l^{(2)},m\bigr),
$$

\smallskip

\item[(iii)] (Twisted module condition) for all $h,l\in H$, $a\in A$ we have
$$
\bigl(a^{l^{(1)}}\bigr)^{h^{(1)}} f\bigl(h^{(2)},l^{(2)} \bigr) = f\bigl(h^{(1)},l^{(1)}
\bigr)a^{h^{(2)}l^{(2)}}.
$$

\smallskip

\end{enumerate}

Next we establish some notations that we will use trough the paper.

\begin{notations} Let $K$ be a subalgebra of $A$ and let $B = A$ or $B = E$.

\begin{enumerate}

\smallskip

\item We set $\ov{D} = B/K$ and $\ov{H} = H/k$.

\smallskip

\item We use the unadorned tensor symbol $\ot$ to denote the tensor product $\ot_K$.

\smallskip

\item We write $\ov{H}^{\ot_k^l} = \ov{H}\ot_k\cdots\ot_k \ov{H}$ ($l$ times),
$\ov{D}^{\ot^l} = \ov{D}\ot\cdots\ot \ov{D}$ ($l$ times) and $\B_l(B)= B\ot\ov{D}^{\ot^l}\ot
B$.

\smallskip

\item Given $b_0\ot\cdots\ot b_r \in B^{\ot^{r+1}}$ and $0\le i<j\le r$, we write $\bb_{ij} =
b_i\ot\cdots\ot b_j$.

\smallskip

\item Given $\bh_{ij}\in H^{\ot_k^{j-i+1}}$, we set
$$
\bh_{ij}^{(1)}\ot_k \bh_{ij}^{(2)} = h_i^{(1)}\ot_k \cdots\ot_k h_j^{(1)} \ot_k h_i^{(2)}\ot_k
\cdots\ot_k h_j^{(2)}.
$$

\smallskip

\item Given $a\in A$ and $\bh_{ij}\in H^{\ot_k^{j-i+1}}$, we write $a^{\bh_{ij}} =
\Bigl(\cdots(a^{h_j})^{h_{j-1}}\cdots\Bigr)^{h_i}$.

\smallskip

\item Given $\ba_{ij}\in A^{\ot^{j-i+1}}$ and $h\in H$, we write $\ba_{ij}^h = a_i^{h^{(1)}}
\ot \cdots\ot a_j^{(h^{(j-i+1)})}$.

\smallskip

\item The symbol $\gamma(h)$ stands for $1\# h$.

\smallskip

\item  Given $\bh_{ij}\in H^{\ot_k^{j-i+1}}$, we set
$$
\qquad\gamma(\bh_{ij}) = \gamma(h_i)\ot\cdots\ot \gamma(h_j)\quad\text{and} \quad
\ov{\gamma(\bh_{ij})} = \gamma(h_i)\ot_A\cdots\ot_A \gamma(h_j).
$$

\smallskip

\item We will denote by $\mathcal{H}$ the image of the canonical inclusion of $H$ into $A\#
H$.

\smallskip

\item Given $h_1,\dots, h_i\in H$, we will denote by $\langle h_1,\dots,h_i\rangle$ the Hopf
subalgebra of $H$ generated by $h_1,\dots, h_i$.

\end{enumerate}

\end{notations}

\subsection{A simple resolution} Let $\Upsilon$ be the family of all epimorphisms of
$E$-bimodules which split as left $E$-module maps. In this subsection we review the
construction of the $\Upsilon$-projective resolution $(X_*,d_*)$, of $E$ as an $E$-bimodule,
given in Section~1 of \cite{G-G}. We are going to modify the sign of some maps in order to
obtain expressions for the boundary maps $d_*$ and the comparison maps between $(X_*,d_*)$ and
the normalized bar resolution of $E$, simpler than those of the above mentioned paper. Let $K$
be a subalgebra of $A$, closed under the weak action of $H$ on $A$. Since we want to consider
the Cyclic homology of the $K$-algebra $E$, in the sequel $\Upsilon$ will be the family of all
epimorphisms of $E$-bimodules which split as a $(E,K)$-bimodule map.

\smallskip

For all $r,s\ge 0$, let
$$
Y_s = E\ot_A (E/A)^{\ot_{\!A}^s} \ot_A E\quad\text{and}\quad X_{rs} = E\ot_A
(E/A)^{\ot_{\!A}^s} \ot \ov{A}^{\ot^r} \ot E.
$$
Consider the diagram of $E$-bimodules and $E$-bimodule maps
$$
\xymatrix{
\vdots \dto^-{-\partial_2} \\
Y_2 \dto^-{-\partial_2}  & X_{02}\lto_-{\mu_2} & X_{12} \lto_-{d^0_{12}} & \dots
\lto_-{d^0_{22}}\\
Y_1 \dto^-{-\partial_1}  & X_{01}\lto_-{\mu_1} & X_{11} \lto_-{d^0_{11}} & \dots
\lto_-{d^0_{21}}\\
Y_0 & X_{00}\lto_-{\mu_0} & X_{10} \lto_-{d^0_{10}} & \dots \lto_-{d^0_{20}},
 }
$$
where $(Y_*,\partial_*)$ is the normalized bar resolution of the $A$-algebra $E$, introduced in
\cite{G-S}; for each $s\ge 0$, the complex $(X_{*s},d^0_{*s})$ is $(-1)^s$ times the normalized
bar resolution of the algebra inclusion $K\sub A$, tensored on the left over $A$ with $E\ot_A
(E/A)^{\ot_{\!A}^s}$, and on the right over $A$ with $E$; and for each $s\ge 0$, the map
$\mu_s$ is the canonical projection.

\smallskip

Note that $X_{rs} \simeq E\ot_k \ov{H}^{\ot_k^s} \ot \ov{A}^{\ot^r} \ot E$, where the right
action of $K$ on $E\ot_k \ov{H}^{\ot_k^s}$ is the one obtained by translation of structure
through the canonical bijection from $E\ot_k \ov{H}^{\ot_k^s}$ to $E\ot_A (E/A)^{\ot_{\!A}^s}$.
Moreover, each one of the rows of this diagram is contractible as a $(E,K)$-bimodule complex. A
contracting homotopy
$$
\sigma^0_{0s}\colon Y_s \to X_{0s} \qquad\text{and}\qquad \sigma^0_{r+1,s}\colon X_{rs} \to
X_{r+1,s},
$$
of the $s$-th row, is given by
\begin{align*}
&\sigma^0_{0s}\bigl(\ov{\gamma(\bh_{0,s+1})}\bigr) = \ov{\gamma(\bh_{0s})}\ot \gamma(h_{s+1})
\intertext{and}
&\sigma^0_{r+1,s}\bigl(\ov{\gamma(\bh_{0s})}\ot \ba_{1r}\ot a_{r+1}\gamma(h)\bigr) =
(-1)^{r+s+1}\ov{\gamma(\bh_{0s})}\ot \ba_{1,r+1}\ot \gamma(h),
\end{align*}
Let $\wt{\mu}\colon Y_0\to E$ be the multiplication map. The complex of $E$-bimodules
$$
\xymatrix{E & Y_0 \lto_-{-\wt{\mu}} & Y_1 \lto_-{-\partial_1} & Y_2 \lto_-{-\partial_2} & Y_3
\lto_-{-\partial_3} & Y_4 \lto_-{-\partial_4} & Y_5 \lto_-{-\partial_5} & \dots
\lto_-{-\partial_6}}
$$
is also contractible as a complex of $(E,K)$-bimodules. A chain contracting homotopy
$$
\sigma_0^{-1}\colon E \to Y_0\qquad\text{and}\qquad \sigma^{-1}_{s+1}\colon Y_s\to
Y_{s+1}\quad\text{($s\ge 0$)},
$$
is given by $\sigma^{-1}_{s+1}(\bx) = (-1)^s \bx\ot_A 1_E$.

\medskip

For $r\ge 0$ and $1\le l\le s$, we define $E$-bimodule maps $d^l_{rs}\colon X_{rs} \to
X_{r+l-1,s-l}$ recursively on $l$ and $r$, by:
$$
d^l(\bx) = \begin{cases}
\sigma^0\xcirc\partial\xcirc\mu(\bx) &\text{if $l=1$ and $r=0$,}\\
- \sigma^0\xcirc d^1\xcirc d^0(\bx) &\text{if $l=1$ and $r>0$,}\\
- \sum_{j=1}^{l-1} \sigma^0\xcirc d^{l-j}\xcirc d^j(\bx) & \text{if $1<l$ and $r=0$,}\\
- \sum_{j=0}^{l-1} \sigma^0\xcirc d^{l-j}\xcirc d^j(\bx) &\text{if $1<l$ and $r>0$,}
\end{cases}
$$
for $\bx \in E\ot_A (E/A)^{\ot_{\!A}^s} \ot \ov{A}^{\ot^r} \ot K$.

\begin{theorem}[\cite{G-G}]\label{res nuestra} There is a $\Upsilon$-projective resolution of
$E$
\begin{equation}
\xymatrix{E & X_0\lto_{-\mu} & X_1  \lto_{d_1}  &X_2 \lto_{d_2}  & X_3 \lto_{d_3}  & X_4
\lto_{d_4} & \lto_{d_5}  \dots,} \label{eq1}
\end{equation}
where $\mu\colon X_{00}\to E$ is the multiplication map,
$$
X_n = \bigoplus_{r+s=n} X_{rs}\quad\text{and}\quad d_n = \sum^n_{l=1} d^l_{0n} + \sum_{r=1}^n
\sum^{n-r}_{l=0} d^l_{r,n-r}.
$$
\end{theorem}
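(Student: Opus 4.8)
The plan is to exhibit \eqref{eq1} as a $\Upsilon$-projective resolution by proving two things: that each $X_n$ is $\Upsilon$-projective, and that the augmented total complex $E\xleftarrow{-\mu} X_*$ is contractible as a complex of $(E,K)$-bimodules. Relative projectivity is immediate from the isomorphism $X_{rs}\simeq E\ot_k\ov{H}^{\ot_k^s}\ot\ov{A}^{\ot^r}\ot E$: each $X_{rs}$ is an induced bimodule of the form $E\ot_K V\ot_K E$, and these are exactly the bimodules projective relative to $\Upsilon$; hence so is the finite direct sum $X_n=\bigoplus_{r+s=n}X_{rs}$. The substance of the theorem is therefore the construction of the $(E,K)$-linear contracting homotopy, together with the verification that the associated differential is the asserted $d_n$.

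For this I would use the Perturbation Lemma, taking as the unperturbed situation the total complex $(X_{**},d^0)$ in which only the horizontal maps act. Since $d^0$ preserves the row index $s$ and each row $Y_s\xleftarrow{\mu_s}X_{0s}\xleftarrow{d^0_{1s}}X_{1s}\xleftarrow{}\cdots$ is $(E,K)$-contractible via the explicit homotopy $\sigma^0$, the complex $(X_{**},d^0)$ deformation retracts onto $(Y_*,0)$, the retraction data being the projections $\mu_s$, their $(E,K)$-linear sections, and $\sigma^0$ itself. The vertical differential $-\partial$ of the bar resolution $(Y_*,\partial)$ is then the perturbation to be transferred back to $X_{**}$.

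The hypothesis of the Perturbation Lemma that $\sigma^0\delta$ be locally nilpotent holds because the perturbation $\delta$ lowers $s$ by one while $\sigma^0$ raises $r$ by one, so on any element of fixed total degree the composite $\sigma^0\delta$ strictly decreases the nonnegative index $s$ and therefore vanishes after finitely many iterations; in particular every perturbation series is finite. The Lemma then produces a differential on $X_{**}$ of the form $d^0+\delta+\delta\sigma^0\delta+\delta\sigma^0\delta\sigma^0\delta+\cdots$ together with a contraction of the resulting complex onto $(Y_*,-\partial)$. Grouping the terms of this series by the total amount $l$ by which they shift the bidegree reproduces precisely the recursion defining the maps $d^l_{rs}$, so the transferred differential is $d_n=\sum_l d^l$; in particular $d_n\circ d_{n+1}=0$. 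Finally, composing this contraction with the $(E,K)$-contraction of $(Y_*,-\partial)$ onto $E$ given by $\sigma^{-1}$ yields an $(E,K)$-linear contracting homotopy for $E\xleftarrow{-\mu}X_*$, which is exactly what is required.

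The main obstacle is the bookkeeping concentrated in this last step: one must check the local nilpotence hypothesis precisely, identify the Perturbation Lemma series with the recursion for the $d^l$, and, most delicately, track all the signs consistently (the $(-1)^s$ normalization of the rows, the signs built into $\sigma^0$ and $\sigma^{-1}$, and the sign conventions in $-\mu$, $-\partial$ and the total differential). Since this construction is carried out in \cite{G-G}, I would present the argument as a reduction to that reference, recording only the modified signs that yield the simpler boundary formulas stated here.
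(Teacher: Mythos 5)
Your high-level plan (prove each $X_n$ is $\Upsilon$-projective, then produce an $(E,K)$-linear contraction of the augmented complex \eqref{eq1}) is the right one, and it is the structure of the argument in \cite{G-G} --- which is all this paper offers for Theorem~\ref{res nuestra}: it cites that reference and merely reviews the construction, namely the recursion defining the $d^l_{rs}$ and the explicit contracting homotopy $\ov{\sigma}$ of Proposition~\ref{cont nuestra}. Your projectivity step is correct in substance, though the characterization is off: $X_{rs}$ is not visibly of the form $E\ot_K V\ot_K E$ (the factor $E\ot_A(E/A)^{\ot_A^s}$ carries a twisted right $K$-action and is not an induced left module); what makes it $\Upsilon$-projective is that it has the form $M\ot E$ with $M$ an $(E,K)$-bimodule, because $\Upsilon$-epimorphisms split $(E,K)$-linearly.

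The genuine gap is the direction in which you apply the Perturbation Lemma. As stated (Theorem~\ref{th2.1}, after \cite{C}), the lemma takes as \emph{input} a perturbation $\de$ of the differential of the big complex, i.e.\ a map already known to satisfy $(d+\de)^2=0$, and returns a perturbed differential $\partial^1=\partial+p\xcirc A\xcirc i$ on the retract together with perturbed maps $i^1,p^1,h^1$; it never manufactures a differential on the big complex. In your setup the retract of $\bigl(\Tot(X_{**}),d^0\bigr)$ is $(Y_*,0)$, and the perturbation you want to add, $-\partial$, lives on the small side; transferring a differential upward from the retract is exactly what the lemma does not do, and the series $d^0+\de+\de\xcirc\sigma^0\xcirc\de+\cdots$ that you say the lemma produces on $X_{**}$ is a misreading of the formula for the transferred differential on the \emph{small} side. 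The natural repair --- lifting the perturbation to the big complex as $\de:=\sigma^0\xcirc(-\partial)\xcirc\mu$, extended by zero off the column $r=0$ --- does satisfy $(d^0+\de)^2=0$ and the lemma applies; but then $\de\xcirc\sigma^0=0$, the perturbation series collapses, and the differential on $\Tot(X_{**})$ remains $d^0+\de$, which has no components $d^l_{rs}$ with $l\ge 2$ and none with $l=1$, $r>0$. So this yields \emph{a} $\Upsilon$-projective resolution with the same modules, but not the one asserted, whose differential $d_n=\sum_l d^l$ has the explicit nonzero higher components recorded in Theorem~\ref{Formula d^1} and used throughout the paper. What cannot be bypassed is precisely the recursive construction: one defines the $d^l$ by the stated recursion, proves by induction on $l$ (using $\sigma^0\xcirc\sigma^0=0$ and the contractibility of the rows) that the total differential squares to zero, and builds $\ov{\sigma}$ by the parallel recursion for the $\sigma^l$. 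The Perturbation Lemma needs the perturbation $\sum_{l\ge 1}d^l$ as input; it cannot supply it.
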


In order to carry out our computations we also need to give an explicit contracting homotopy of
the resolution~\eqref{eq1}. For this we define maps
$$
\sigma^l_{l,s-l}\colon Y_s \to X_{l,s-l}\quad\text{and}\quad \sigma^l_{r+l+1,s-l}\colon X_{rs}
\to X_{r+l+1,s-l}
$$
recursively on $l$, by:
$$
\sigma^l_{r+l+1,s-l} = - \sum_{i=0}^{l-1} \sigma^0 \xcirc d^{l-i} \xcirc\sigma^i\qquad
\text{($0<l\le s$ and $r\ge -1$)}.
$$

\begin{proposition}[\cite{G-G}]\label{cont nuestra} The family
$$
\ov{\sigma}_0\colon E \to X_0,\qquad \ov{\sigma}_{n+1}\colon X_n \to X_{n+1}\quad\text{($n\ge
0$)},
$$
defined by $\ov{\sigma}_0 = \sigma_{00}^0 \xcirc \sigma_0^{-1}$ and
$$
\ov{\sigma}_{n+1} = - \sum_{l=0}^{n+1} \sigma_{l,n-l+1}^l \xcirc \sigma_{n+1}^{-1} \xcirc \mu_n
+ \sum_{r=0}^n \sum_{l=0}^{n-r} \sigma_{r+l+1,n-r-l}^l\qquad\text{($n\ge 0$)},
$$
is a contracting homotopy of~\eqref{eq1}.
\end{proposition}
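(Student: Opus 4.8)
The plan is to check directly that the displayed family is a contracting homotopy of the augmented resolution \eqref{eq1}, that is, that
\[ -\mu\circ\ov{\sigma}_0 = \ide_E,\qquad \ov{\sigma}_0\circ(-\mu) + d_1\circ\ov{\sigma}_1 = \ide_{X_0},\qquad \ov{\sigma}_n\circ d_n + d_{n+1}\circ\ov{\sigma}_{n+1} = \ide_{X_n}\quad(n\ge 1). \]
I would expand every operator into its bidegree-homogeneous pieces according to $X_n=\bigoplus_{r+s=n}X_{rs}$, so that $d$ becomes $\sum_l d^l$ with $d^l\colon X_{rs}\to X_{r+l-1,s-l}$, the second sum of $\ov{\sigma}$ becomes $\sum_l\sigma^l$ with $\sigma^l\colon X_{rs}\to X_{r+l+1,s-l}$, and the first sum of $\ov{\sigma}$ carries the column data $\mu$, $\partial$, $\sigma^{-1}$. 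The only elementary inputs are the contraction identities already available for the $s$-th row ($\mu\circ\sigma^0_{0s}=\ide$, $\sigma^0_{0s}\circ\mu+d^0\circ\sigma^0=\ide$ on $X_{0s}$, and $\sigma^0\circ d^0+d^0\circ\sigma^0=\ide$ on $X_{rs}$ for $r\ge1$) and for the bar resolution of the $A$-algebra $E$ ($-\wt{\mu}\circ\sigma^{-1}_0=\ide_E$ and $\sigma^{-1}\circ(-\partial)+(-\partial)\circ\sigma^{-1}=\ide$). The bottom identity already illustrates the mechanism: writing $\mu=\wt{\mu}\circ\mu_0$ one gets $-\mu\circ\ov{\sigma}_0=-\wt{\mu}\circ\mu_0\circ\sigma^0_{00}\circ\sigma^{-1}_0=-\wt{\mu}\circ\sigma^{-1}_0=\ide_E$, the middle factor collapsing by the row identity and the rest by the column augmentation.

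The core is a simultaneous induction on the perturbation degree $p$. Since $d^l\circ\sigma^m$ and $\sigma^m\circ d^l$ both map $X_{rs}$ to $X_{r+l+m,\,s-l-m}$, the homotopy relation on the interior $r\ge 1$ splits, bidegree by bidegree, into $d^0\circ\sigma^0+\sigma^0\circ d^0=\ide$ (the diagonal, $p=0$) together with the vanishing of the shift-$p$ part for each $p\ge 1$, namely
\[ \textstyle\sum_{j=0}^p\bigl(d^j\circ\sigma^{p-j}+\sigma^{p-j}\circ d^j\bigr)=0 \quad\text{on }X_{rs},\ r\ge1. \]
I would prove this in tandem with the differential relation $\sum_{j=0}^p d^{p-j}\circ d^j=0$ (which underlies Theorem~\ref{res nuestra}). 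Both are engineered into the recursions: feeding $\sigma^{p}=-\sum_{i<p}\sigma^0\circ d^{p-i}\circ\sigma^i$ and the formula for $d^{p}$ into the left-hand sides and repeatedly applying $d^0\circ\sigma^0=\ide-\sigma^0\circ d^0$ collapses everything, modulo the inductive hypothesis, to an expression of the form $\sigma^0\circ\bigl(\sum_{j}d^{p-j}\circ d^j\bigr)\circ\sigma^0$, which vanishes by the differential relation at the same level $p$. This is exactly the cancellation one checks by hand for $p=1$, and the induction propagates it.

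Finally I would treat the boundary $r=0$, where the argument really has to close and where the difficulty concentrates. Here the row differential $d^0$ is absent and is replaced by the augmentation $\mu$, the recursions for $d^l$ and $\sigma^l$ switch to their $r=0$ branches, and the first sum of $\ov{\sigma}_{n+1}$, namely $-\sum_l\sigma^l_{l,n-l+1}\circ\sigma^{-1}_{n+1}\circ\mu_n$, becomes active. The mechanism is that applying $d$ to this column term regenerates the column contraction $(-\partial)\circ\sigma^{-1}+\sigma^{-1}\circ(-\partial)=\ide$ sandwiched between $\mu$ and $\sigma^0$; the linking term is $d^1=\sigma^0\circ\partial\circ\mu$, which is precisely what ties the two resolutions together, and it is this contribution that supplies $\ide$ on the $X_{0s}$ summand, while the remaining, bidegree-shifting pieces cancel by the $r=0$ branches of the recursions, in parallel with the interior cancellations of the previous paragraph. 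The main obstacle is bookkeeping rather than conceptual: one must keep three sign conventions consistent through the telescoping — the $(-1)^{r+s+1}$ in $\sigma^0$, the $(-1)^s$ in $\sigma^{-1}$, and the minus signs in the recursions — and must verify that the $r=0$ parts of the two sums in $\ov{\sigma}_{n+1}$ interlock correctly with the shift-$p$ cancellations. Since all sums are finite ($0\le l\le s\le n$) there are no convergence issues. Conceptually the whole scheme is an instance of the Perturbation Lemma recalled in Section~1: the recursive formulas for $d^l$ and $\sigma^l$ are the unwound geometric series it produces, and one could alternatively invoke it to bypass part of the explicit verification.
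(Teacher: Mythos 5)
The paper itself offers nothing to compare against: Proposition~\ref{cont nuestra} is imported from \cite{G-G} and stated without proof, and the closest in-paper material is Appendix~A (Lemmas~\ref{leA.1} and~\ref{leA.2}), which manipulates $\ov{\sigma}$ by exactly the kind of recursion-unwinding you propose. Judged on its own merits, your strategy is correct and its core mechanisms do check out. The bidegree splitting is legitimate because $d^j\xcirc\sigma^m$ and $\sigma^m\xcirc d^j$ both shift $(r,s)$ by $(p,-p)$ with $p=j+m$; the $p=0$ piece is the row contraction; and for $p\ge 1$ on the interior, feeding in only the \emph{global} recursion $\sigma^m=-\sigma^0\xcirc\sum_{i<m}d^{m-i}\xcirc\sigma^i$, the row identity $d^0\xcirc\sigma^0=\ide-\sigma^0\xcirc d^0$, the components $\sum_j d^{p-j}\xcirc d^j=0$ of $d\xcirc d=0$ (available from Theorem~\ref{res nuestra}), and the inductive hypothesis for $q<p$ does collapse the shift-$p$ sum to zero — for $p=1$ it reduces precisely to $\sigma^0\xcirc(d^0\xcirc d^1+d^1\xcirc d^0)\xcirc\sigma^0=0$, as you predict. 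On the column $r=0$, the terms $-\sigma^l\xcirc\sigma^{-1}\xcirc\mu$ combine with $\mu\xcirc d^1=\partial\xcirc\mu$ and $\mu\xcirc\sigma^0=\ide$ to produce $-\sigma^0\xcirc(\ide+\partial\xcirc\sigma^{-1}+\sigma^{-1}\xcirc\partial)\xcirc\mu$, which vanishes because $\sigma^{-1}$ contracts $(Y_*,-\partial_*)$; this is the sandwiching you describe (though note it is $d^0\xcirc\sigma^0$, not the linking term, that actually supplies $\ide-\sigma^0\xcirc\mu$ on $X_{0s}$).

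Two cautions. First, a bookkeeping point you pass over: for $r=1$ the component $d^0$ lands in the column $X_{0s}$, where $\ov{\sigma}$ carries the extra term $-\sum_l\sigma^l\xcirc\sigma^{-1}\xcirc\mu$; your interior identity survives only because $\mu\xcirc d^0=0$, which must be said. Relatedly, the recursions for $d^l$ hold only on elements of $E\ot_A(E/A)^{\ot_{\!A}^s}\ot\ov{A}^{\ot^r}\ot K$ and their $E$-bilinear extensions, whereas the contraction identities are identities of merely $(E,K)$-linear maps; your interior induction is safe because it never invokes those recursions (only the one for $\sigma^l$, which is global), but boundary steps such as $\mu\xcirc d^1=\partial\xcirc\mu$ need the generator-plus-bilinearity argument. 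Second, your closing claim that Theorem~\ref{lema de perturbacion} could be invoked to bypass the verification is not right as stated: that lemma transfers a perturbation of the differential of the \emph{big} complex to the retract, whereas here one must lift the differential $-\partial$ of the small complex $(Y_*,\partial_*)$ up to $X$; the recursive formulas resemble the geometric series of the lemma but are not a literal instance of it. Since you offer this only as an optional shortcut, it does not affect the validity of your main argument.
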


Let $\wt{f}\langle h_1,\dots,h_i\rangle$ be the minimal $K$-subbimodule of $A$ including
$f\langle h_1,\dots,h_i\rangle$ and closed under the weak action of $\langle h_1,\dots,
h_i\rangle$ on $A$.

\begin{theorem}[\cite{G-G}]\label{Formula d^1} Let $\bx = \ov{\gamma(\bh_{0s})}\ot \ba_{1r}\ot
1$. The following assertions hold:
\begin{align*}
\qquad d^1(\bx) &= \sum_{i=0}^{s-1} (-1)^i
\ov{\gamma(\bh_{0,i-1})}\ot_A\gamma(h_i)\gamma(h_{i+1})\ot_A \ov{\gamma(\bh_{i+1,s})} \ot
\ba_{1r}\ot 1\\
& + (-1)^s \ov{\gamma(\bh_{0,s-1})}\ot \ba_{1r}^{h_s^{(1)}}\ot \gamma(h_s^{(2)})
\intertext{and}
d^2(\bx) & =  (-1)^{s-1} \ov{\gamma(\bh_{0,s-2})}\ot f(h_{s-1}^{(1)},h_s^{(1)}) \ov{*}\ba_{1r}
\ot \gamma(h_{s-1}^{(2)}h_s^{(2)}),
\end{align*}
where $f(h,l)\ov{*}\ba_{1r} = \sum_{i=0}^r (-1)^i (\ba_{1i}^{l^{(1)}})^{h^{(1)}}\ot
f(h^{(2)},l^{(2)})\ot \ba_{i+1,r}^{h^{(3)}l^{(3)}}$. Moreover, for each $l\ge 2$, the map
$d^l_{rs}$ takes $\bx$ into the $E$-subbimodule of $X_{r+l-1,s-l}$ generated by all the simple
tensors
$$
1\ot x_1\ot_A\cdots\ot_A x_{s-l}\ot a_1\ot\cdots\ot a_{r+l-1}\ot 1
$$
with one $a_j$ in $f\langle h_1,\dots,h_s\rangle$ and $l-2$ of the others $a_j$'s in
$\wt{f}\langle h_1,\dots,h_s\rangle$.

\end{theorem}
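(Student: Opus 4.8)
The plan is to prove all three formulas by unwinding the recursive definition of $d^l_{rs}$, carrying out a nested induction on $l$ and $r$. The only facts about $E=A\#_f H$ that I would use are the two straightening relations $\gamma(h)\,a=a^{h^{(1)}}\gamma(h^{(2)})$ and $\gamma(h)\gamma(l)=f(h^{(1)},l^{(1)})\gamma(h^{(2)}l^{(2)})$, together with the explicit form of the three ingredients of the recursion: the normalized bar differential $\partial$ of the $A$-algebra $E$, the column differential $d^0$ (which is $(-1)^s$ times the bar differential of $K\sub A$), and the row homotopy $\sigma^0$, which moves the rightmost tensorand into the final $E$-slot and, when it raises $r$, records the $A$-part of that tensorand (modulo $K$) as a new $\ov A$-factor. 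A remark I would make at the very start, and use repeatedly, is the normalization mechanism: since $\ov A=A/K$ and $\ov H=H/k$, any simple tensor acquiring a new $\ov A$-factor that lies in $K$ is zero; in particular, whenever $\sigma^0$ acts on a tensor whose last $E$-entry is a bare $\gamma(h)$ (whose $A$-part is $1\in K$) the resulting term vanishes. This is what collapses the long sums produced by the compositions down to the short formulas stated.

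For $d^1$ I would induct on $r$. The base case $r=0$ is the direct evaluation of $\sigma^0\circ\partial\circ\mu$: applying $\mu$ places $\ov{\gamma(\bh_{0s})}\ot 1$ in $Y_s$ with trailing entry $1_E$, the differential $\partial$ yields the $s$ interior terms in which two adjacent $\gamma$'s are multiplied (trailing entry still $1_E$) plus the one term in which $\gamma(h_s)$ is absorbed into the final slot, and $\sigma^0$ reinstates these in $X_{0,s-1}$, producing exactly the two summands of the stated formula. For $r>0$ I would use $d^1=-\sigma^0\circ d^1\circ d^0$ together with the inductive hypothesis at $r-1$; the weak-action superscript $\ba_{1r}^{h_s^{(1)}}$ in the boundary term is precisely the cumulative effect of straightening $\gamma(h_s)$ to the right past all the $a_j$ via the first relation above.

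The formula for $d^2$ then follows by substituting the now-known $d^1$ into $d^2=-\sigma^0\circ d^1\circ d^1$ (when $r=0$), respectively $d^2=-\sigma^0\circ(d^1\circ d^1+d^2\circ d^0)$ (when $r>0$, inducting again on $r$). By the normalization remark the only surviving contribution is the one in which two adjacent $\gamma$'s get multiplied by the second relation, creating the cocycle value $f(h_{s-1}^{(1)},h_s^{(1)})$ as the $A$-part of the last entry; $\sigma^0$ then extracts it into the $\ov A$-slot, and for $r>0$ the operation $f(h,l)\ov{*}\ba_{1r}$ merely records how this insertion interleaves with the $a_j$ that have been weak-acted in passing.

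Finally, I would prove the general membership statement for $l\ge 2$ by the same nested induction, carrying the invariant that in each simple tensor of $d^l(\bx)$ the most recently extracted cocycle factor lies in $f\langle h_1,\dots,h_s\rangle$ while the earlier $l-2$ extracted factors lie in $\wt f\langle h_1,\dots,h_s\rangle$. What makes this invariant self-propagating are exactly the two defining properties of $\wt f\langle h_1,\dots,h_s\rangle$: it contains $f\langle h_1,\dots,h_s\rangle$ and is closed under the weak action of $\langle h_1,\dots,h_s\rangle$. Thus, when an outer operation straightens an already-extracted cocycle value past a $\gamma$, that value is hit by a weak action and so drops from $f\langle h_1,\dots,h_s\rangle$ into $\wt f\langle h_1,\dots,h_s\rangle$, while the outermost $\sigma^0$ always supplies the single fresh $f\langle h_1,\dots,h_s\rangle$-factor. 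I expect the real difficulty to be precisely this accounting — keeping the tally of $f$-factors versus $\wt f$-factors correct across the nested sums of the recursion while simultaneously tracking the signs coming from $\partial$, from the $(-1)^s$ in $d^0$ and from $\sigma^0$ — rather than any isolated conceptual point.
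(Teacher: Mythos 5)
A preliminary remark on the comparison itself: this paper does \emph{not} prove Theorem~\ref{Formula d^1}; the statement is imported from \cite{G-G}, so your argument can only be measured against the proof in that reference and against the closely analogous inductions this paper carries out in Appendix~A. That said, your route is the right one and is, in substance, the one that works: everything follows by unwinding the recursive definition of $d^l$ by nested induction on $l$ and $r$, with the key observation that $\sigma^0_{r+1,s}$ annihilates any tensor whose last $E$-entry has $A$-part in $K$ (in particular one ending in a bare $\gamma(h)$ or in $1$), since the newly created $\ov{A}$-factor is then zero in $\ov{A}=A/K$. Your treatment of $d^1$ (base case $\sigma^0\circ\partial\circ\mu$; inductive step $-\sigma^0\circ d^1\circ d^0$, the superscript $\ba_{1r}^{h_s^{(1)}}$ arising from straightening $\gamma(h_s)$ past the $a_j$'s) and of $d^2$ (the sole surviving composition is boundary-of-boundary, which makes $f(h_{s-1}^{(1)},h_s^{(1)})$ appear as the $A$-part of the last slot, whence the outer $\sigma^0$ extracts it) reproduces the stated formulas correctly.

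The gap is in the invariant you propose for $l\ge 2$; as stated it is not self-propagating, on two counts. First, it over-claims: the fresh $f\langle h_1,\dots,h_s\rangle$-factor is \emph{not} always the one supplied by the outermost $\sigma^0$. This fails already for the theorem's own $d^2$-formula when $r>0$: in the terms of $f(h,l)\ov{*}\ba_{1r}$ with $i<r$ the cocycle value occupies an interior $\ov{A}$-slot; such terms come from the summand $-\sigma^0\circ d^2\circ d^0$ of the recursion, where the outermost $\sigma^0$ extracts the weak-acted original entry $a_r$ (pushed into the last $E$-slot by $d^0$), the $f$-value having been created one level down inside the inner $d^2$. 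The invariant must therefore be: every new $\ov{A}$-slot lies in $\wt{f}\langle h_1,\dots,h_s\rangle$, and at least one of them --- one never hit by a weak action since its creation --- lies in $f\langle h_1,\dots,h_s\rangle$. Second, and more seriously, in order to apply the inductive hypothesis inside $d^l=-\sum_j \sigma^0\circ d^{l-j}\circ d^j$ and to decide what the outer $\sigma^0$ does to each term (kill it, extract a weak-acted original $a_j$, or extract a fresh $f$-value produced by a product of two $\gamma$'s), you must simultaneously control the shape of the last $E$-slot of every term of $d^{l-j}\circ d^j(\bx)$. This is exactly the auxiliary statement the paper proves for its own purposes in Lemma~\ref{leA.4}: for $l\ge 2$, $d^l$ sends tensors ending in $1$ to combinations of tensors ending in a bare $\gamma(h)$, while $d^1$ sends them into $E$-multiples of tensors ending in $1$ plus tensors ending in a bare $\gamma(h)$. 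Without proving such a lemma jointly with your invariant, the ``accounting'' you defer cannot be completed, because survival under $\sigma^0$ and the nature of the extracted factor are otherwise undetermined; with it, your plan does close up, so this is a repairable omission rather than a wrong approach.
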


\subsubsection{Comparison with the normalized bar resolution} Let $(\B_*(E),b'_*)$ be the
normalized bar resolution of the algebra inclusion $A\sub E$. As it is well known, the complex
$$
\xymatrix{E & \B_0(E)\lto_-{\mu}&\B_1(E) \lto_-{b'_1} &\B_2(E) \lto_-{b'_2} & \B_3(E)
\lto_-{b'_3} &\dots \lto_-{b'_4}}
$$
is contractible as a complex of $(E,K)$-bimodules, with contracting homotopy
$$
\xi_0\colon E\to \B_0(E),\qquad \xi_{n+1}\colon \B_n(E)\to \B_{n+1}(E)\quad\text{($n\ge 0$),}
$$
given by $\xi_n(\bx) = (-1)^n \bx\ot 1$. Let
$$
\phi_*\colon (X_*,d_*)\to (\B_*(E),b'_*)\qquad\text{and}\qquad \psi_*\colon (\B_*(E),b'_*)\to
(X_*,d_*)
$$
be the morphisms of $E$-bimodule complexes, recursively defined by
\begin{align*}
&\phi_0 = \ide,\quad \psi_0 = \ide,\quad \phi_{n+1}(\bx\ot 1) = \xi_{n+1}\xcirc\phi_n\xcirc
d_{n+1}(\bx\ot 1)\\
\intertext{and}
&\psi_{n+1}(\byy\ot 1) = \ov{\sigma}_{n+1}\xcirc\psi_n\xcirc b'_{n+1}(\byy\ot 1).
\end{align*}

\begin{proposition}[\cite{G-G}]\label{homotopia} $\psi\xcirc\phi = \ide$ and
$\phi\xcirc\psi$ is homotopically equivalent to the identity map. A homotopy
$\omega_{*+1}\colon \phi_*\xcirc\psi_*\to \ide_*$ is recursively defined by
$$
\omega_1 = 0\quad\text{and}\quad\omega_{n+1}(\bx) = \xi_{n+1}\xcirc(\phi_n\xcirc \psi_n - \ide
- \omega_n\xcirc b'_n)(\bx),
$$
for $\bx\in E\ot\ov{E}^{\ot^n}\ot K$.
\end{proposition}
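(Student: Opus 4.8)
The plan is to prove the two assertions separately, each by induction on the homological degree, using throughout that $\phi$ and $\psi$ are morphisms of $E$-bimodule complexes lifting $\ide_E$, and that $\xi$ and $\ov{\sigma}$ are the prescribed $(E,K)$-bimodule contracting homotopies. The two facts I would extract first are the contracting-homotopy identities $b'_{n+1}\circ\xi_{n+1} = \ide - \xi_n\circ b'_n$ (on $\B_n(E)$) and $d_{n+1}\circ\ov{\sigma}_{n+1} + \ov{\sigma}_n\circ d_n = \ide$ (on $X_n$), together with the augmentation relations in degree $0$, which give the base case $\phi_0=\psi_0=\ide$ on $X_0 = \B_0(E)$. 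Since every map involved is an $E$-bimodule morphism, it suffices to verify each identity on generators of the form $\bx\ot 1$ and then invoke bimodule linearity.

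For $\psi\circ\phi = \ide$ I would argue inductively. Because $\phi_{n+1}(\bx\ot 1) = \xi_{n+1}\circ\phi_n\circ d_{n+1}(\bx\ot 1)$ lands in the image of $\xi$, it has the form $\byy\ot 1$, so it is a legitimate argument for the defining recursion of $\psi_{n+1}$. Feeding it in and using that $\phi$ is a chain map ($b'_{n+1}\circ\phi_{n+1} = \phi_n\circ d_{n+1}$) together with the inductive hypothesis $\psi_n\circ\phi_n=\ide$, the composite collapses to $\psi_{n+1}\circ\phi_{n+1}(\bx\ot 1) = \ov{\sigma}_{n+1}\circ d_{n+1}(\bx\ot 1)$. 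The contracting-homotopy relation in degree $n+1$ rewrites this as $\bx\ot 1 - d_{n+2}\circ\ov{\sigma}_{n+2}(\bx\ot 1)$, and the step is completed by the key observation that $\ov{\sigma}_{n+2}$ annihilates every element terminating in $1_E$. This is exactly where the explicit formulas of Proposition~\ref{cont nuestra} are needed: $\ov{\sigma}$ is assembled from the operators $\sigma^{-1}$ and $\sigma^0$, each of which inserts a $1$ and hence produces a degenerate, and therefore zero, tensor in the normalized complex when applied to an element already ending in $1_E$.

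For $\phi\circ\psi\simeq\ide$ I would verify directly that $\omega$ satisfies the homotopy identity $b'_{n+1}\circ\omega_{n+1} + \omega_n\circ b'_n = \phi_n\circ\psi_n - \ide$, again by induction, with the base case immediate from $\omega_1=0$, the convention $\omega_0=0$, and $\phi_0\psi_0=\ide$. For the inductive step I would substitute the definition of $\omega_{n+1}$ into $b'_{n+1}\circ\omega_{n+1}$, use $b'_{n+1}\circ\xi_{n+1} = \ide - \xi_n\circ b'_n$ to peel off the homotopy, and then show the surviving correction term $\xi_n\circ b'_n\circ(\phi_n\psi_n - \ide - \omega_n b'_n)$ vanishes: one commutes $b'_n$ past $\phi_n\psi_n$ using the chain-map property, uses $b'_n\circ b'_n=0$, and invokes the inductive homotopy relation $b'_n\circ\omega_n = \phi_{n-1}\psi_{n-1} - \ide - \omega_{n-1}b'_{n-1}$ to make the bracket collapse to $0$ after applying $b'_n$. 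What remains is precisely $\phi_n\psi_n - \ide - \omega_n b'_n$, which rearranges to the claimed identity.

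The genuinely non-formal point — and the step I expect to be the main obstacle — is the side condition that $\ov{\sigma}$ kills elements ending in $1_E$, since this is what upgrades the a priori statement ``$\psi\circ\phi$ is homotopic to $\ide$'' to the on-the-nose equality $\psi\circ\phi=\ide$. Establishing it rigorously will require unwinding the recursive definition of $\ov{\sigma}$ in Proposition~\ref{cont nuestra} and checking the vanishing summand by summand for the operators $\sigma^l$, while tracking the signs $(-1)^{r+s}$ attached to $\sigma^0$ and $\sigma^{-1}$. By contrast, the $\phi\circ\psi\simeq\ide$ half is a formal consequence of the contracting-homotopy and chain-map relations, so there the only real care is sign bookkeeping and confirming the degree-zero conventions for $\omega_0$ and $\omega_1$.
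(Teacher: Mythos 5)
Your proposal is correct. One contextual point first: this paper does not actually prove Proposition~\ref{homotopia} --- it is imported from \cite{G-G} --- so there is no in-paper proof to compare against; your argument must stand on its own, and it does. For $\psi\xcirc\phi=\ide$ your chain of reductions is exactly right: the recursion for $\psi_{n+1}$ is applicable because $\phi_{n+1}(\bx\ot 1)\in\ima(\xi_{n+1})$ ends in $1$, then the chain-map property and the inductive hypothesis collapse the composite to $\ov{\sigma}_{n+1}\xcirc d_{n+1}(\bx\ot 1)$, and everything hinges on the vanishing $\ov{\sigma}_{n+2}(\bx\ot 1)=0$. That vanishing does hold, by the mechanism you describe: in the formula of Proposition~\ref{cont nuestra}, the term $\sigma^{-1}\xcirc\mu$ moves the terminal slot into $E/A$, where $1$ dies, while $\sigma^0$ moves the terminal coefficient into an $\ov{A}$-slot, where any element of $K$ dies; the operators $\sigma^l$ with $l\ge 1$ then vanish by induction on $l$, since every summand of $\sigma^l=-\sum_{i<l}\sigma^0\xcirc d^{l-i}\xcirc\sigma^i$ has some $\sigma^i$ with $i<l$ as its rightmost factor. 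This is precisely the kind of bookkeeping the authors themselves do in Appendix~A (compare Lemma~\ref{leA.2}, Remark~\ref{reA.3} and items~(3) and~(7) of Lemma~\ref{leA.6}), so your key lemma is fully consistent with, and verifiable from, the paper's formulas. The second half is, as you say, purely formal: substituting the definition of $\omega_{n+1}$, using $b'_{n+1}\xcirc\xi_{n+1}=\ide-\xi_n\xcirc b'_n$, and killing the correction term via the chain-map property, $b'\xcirc b'=0$ and the inductive homotopy identity yields $b'_{n+1}\xcirc\omega_{n+1}+\omega_n\xcirc b'_n=\phi_n\xcirc\psi_n-\ide$. The only caution worth recording is that every identity is first verified on $E$-bimodule generators (tensors ending in $K$) and then extended by bimodule linearity, which your write-up already anticipates.
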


\begin{remark} Since $\omega\bigl(E\ot\ov{E}^{\ot^{n-1}}\ot K\bigr)\subseteq E\ot
\ov{E}^{\ot^n} \ot K$ and $\xi$ vanishes on $E\ot\ov{E}^{\ot^n}\ot K$,
$$
\omega(\bx_{0n}\ot 1) = \xi\bigl(\phi\xcirc \psi(\bx_{0n}\ot 1) - (-1)^n
\omega(\bx_{0n})\bigr).
$$
\end{remark}

\subsubsection{The filtrations of $(\B_*(E) ,b'_*)$ and $(X_*, d_*)$} Let
$$
F^i(X_n) = \bigoplus_{0\le s\le i} E\ot_A (E/A)^{\ot_{\!A}^s}\ot \ov{A}^{\ot^{n-s}}\ot E
$$
and let $F^i(\B_n(E))$ be the $E$-subbimodule of $\B_n(E)$ generated by the tensors
$$
1\ot x_1\ot \cdots\ot x_n\ot 1
$$
such that at least $n-i$ of the $x_j$'s belong to $A$. The normalized bar resolution $(\B_*(E)
,b'_*)$ and the resolution $(X_*, d_*)$ are filtered by
\begin{align*}
& F^0(\B_*(E))\subseteq F^1(\B_*(E))\subseteq F^2(\B_*(E))\subseteq\dots
\intertext{and}
& F^0(X_*)\subseteq F^1(X_*)\subseteq F^2(X_*)\subseteq\dots,
\end{align*}
respectively. In \cite[Proposition 1.2.2]{G-G} it was proven that the maps  $\phi_*$, $\psi_*$
and $\omega_{*+1}$ preserve filtrations. In Appendix~A we are going to improve this result.

\subsection{Mixed complexes}
In this subsection we recall briefly the notion of mixed complex. For more details about this
concept we refer to \cite{Ka} and \cite{B}.

\smallskip

A mixed complex $(X,b,B)$ is a graded $k$-module $(X_n)_{n\ge 0}$, endowed with morphisms
$b\colon X_n\to X_{n-1}$ and $B\colon X_n\to X_{n+1}$, such that
$$
b\xcirc b = 0,\quad B\xcirc B = 0\quad\text{and}\quad B \xcirc b + b\xcirc B = 0.
$$
A morphism of mixed complexes $f\colon (X,b,B)\to (Y,d,D)$ is a family of maps $f\colon X_n\to
Y_n$, such that $d\xcirc f = f\xcirc b$ and $D\xcirc f= f\xcirc B$. Let $u$ be a degree~$2$
variable. A mixed complex $\cX = (X,b,B)$ determines a double complex
\[
\xymatrix{\\\\\\ \BP(\cX)=}\qquad
\xymatrix{
& \vdots \dto^-{b} &\vdots \dto^-{b}& \vdots \dto^-{b}& \vdots \dto^-{b}\\
\dots & X_3 u^{-1} \lto_-{B}\dto^-{b} & X_2 u^0\lto_-{B}\dto^-{b} & X_1 u\lto_-{B}\dto^-{b} &
X_0 u^2\lto_-{B} \\
\dots & X_2 u^{-1}\lto_-{B}\dto^-{b} & X_1 u^0\lto_-{B}\dto^-{b} & X_0 u\lto_-{B}\\
\dots & X_1 u^{-1}\lto_-{B}\dto^-{b} & X_0 u^0 \lto_-{B}\\
\dots & X_0 u^{-1} \lto_-{B},}
\]
where $b(\bx u^i) = b(\bx)u^i$ and $B(\bx u^i) = B(\bx)u^{i-1}$. By deleting the positively
numbered columns we obtain a subcomplex $\BN(\cX)$ of $\BP(\cX)$. Let $\BN'(\cX)$ be the kernel
of the canonical surjection from $\BN(\cX)$ to $(X,b)$. The quotient double complex
$\BP(\cX)/\BN'(\cX)$ is denoted by $\BC(\cX)$. The homologies $\HC_*(\cX)$, $\HN_*(\cX)$ and
$\HP_*(\cX)$, of the total complexes of $\BC(\cX)$, $\BN(\cX)$ and $\BP(\cX)$ respectively, are
called the cyclic, negative and periodic homologies of $\cX$. The homology $\HH_*(\cX)$, of
$(X,b)$, is called the Hochschild homology of $\cX$. Finally, it is clear that a morphism
$f\colon \cX\to \cY$ of mixed complexes induces a morphism from the double complex $\BP(\cX)$
to the double complex $\BP(\cY)$.

\smallskip

As usual, given a $K$-bimodule $M$, we let $M\ot$ denote the quotient $M/[M,K]$, where $[M,K]$
is the $k$-module generated by the commutators $m\lambda - \lambda m$, with $\lambda\in K$ and
$m\in M$. Moreover $[m]$ will be denote the class of an element $m\in M$ in $M\ot$. Let $C$ be
a $k$ algebra and $K\sub C$ a subalgebra. The normalized mixed complex of the $K$-algebra $C$
is the mixed complex $(C\ot \ov{C}^{\ot^*}\ot,b,B)$, where $b$ is the canonical Hochschild
boundary map and the Connes operator $B$ is given by
$$
B([c_0\ot\cdots\ot c_r]) = \sum_{i=0}^r (-1)^{ir} [1\ot c_i\ot\cdots\ot c_r\ot c_0\ot\cdots\ot
c_{i-1}].
$$
The cyclic, negative, periodic and Hochschild homology groups $\HC^K_*(C)$, $\HN^K_*(C)$,
$\HP^K_*(C)$ and $\HH^K_*(C)$, of the $K$-algebra $C$, are the respective homology groups of
$(C\ot\ov{C}^{\ot^*}\ot,b,B)$.

\subsection{The perturbation lemma}
Next, we recall the perturbation lemma. We give the more general version introduced in
\cite{C}.

\smallskip

A homotopy equivalence data
\begin{equation}
\xymatrix{(Y,\partial)\ar@<-1ex>[r]_-{i} & (X,d) \ar@<-1ex>[l]_-{p}}, \quad h\colon X_*\to
X_{*+1},\label{eq2}
\end{equation}
consists of the following:

\begin{enumerate}

\smallskip

\item Chain complexes $(Y,\partial)$, $(X,d)$ and quasi-isomorphisms $i$, $p$ between them,

\smallskip

\item A homotopy $h$ from $i\xcirc p$ to $\ide$.
\end{enumerate}

\smallskip

A perturbation~$\de$ of~\eqref{eq2} is a map $\de\colon X_*\to X_{*-1}$ such that $(d+\de)^2 =
0$. We call it small if $\ide - \de\xcirc h$ is invertible. In this case we write $A = (\ide -
\de\xcirc h)^{-1}\xcirc \de$ and we consider
\begin{equation}
\xymatrix{(Y,\partial^1)\ar@<-1ex>[r]_-{i^1} & (X,d+\de)\ar@<-1ex>[l]_-{p^1}}, \quad h^1\colon
X_*\to X_{*+1},\label{eq3}
\end{equation}
with
$$
\partial^1 = \partial + p\xcirc A\xcirc i,\quad i^1 = i + h\xcirc A\xcirc i,\quad
p^1 = p + p\xcirc A\xcirc h,\quad h^1 = h + h\xcirc A\xcirc h.
$$
A deformation retract is a homotopy equivalence data such that $p\xcirc i = \ide$. A
deformation retract is called special if $h\xcirc i = 0$, $p\xcirc h = 0$ and $h\xcirc h = 0$.

\smallskip

In all the cases considered in this paper the map $\de\xcirc h$ is locally nilpotent, and so
$(\ide - \de\xcirc h)^{-1} = \sum_{n=0}^{\infty} (\de\xcirc h)^n$.

\begin{theorem}[\cite{C}]\label{lema de perturbacion} If $\de$ is a small perturbation of the
homotopy equivalence data~\eqref{eq2}, then the perturbed data~\eqref{eq3} is a homotopy
equivalence. Moreover, if \eqref{eq2} is a special deformation retract, then~\eqref{eq3} is
also.\label{th2.1}
\end{theorem}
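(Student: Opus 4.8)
The plan is to verify directly that the perturbed data~\eqref{eq3} satisfies the defining conditions of a homotopy equivalence (and, in the special case, of a special deformation retract), treating~\eqref{eq2} as a package of algebraic identities. First I would record everything available: $d^2 = 0$, $\partial^2 = 0$, the chain-map identities $di = i\partial$ and $pd = \partial p$, the homotopy relation $dh + hd = ip - \ide$ (with this sign convention; the opposite one merely flips signs), and the perturbation relation $(d+\de)^2 = 0$, i.e. $d\de + \de d + \de^2 = 0$. From $A = (\ide - \de h)^{-1}\de = \de(\ide - h\de)^{-1}$ I would extract the two recursions $A = \de + \de h A = \de + A h \de$. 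These encode the geometric series, which converges because $\de h$ is locally nilpotent, and they are the workhorse of every subsequent step.

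The key step, from which the rest follows almost mechanically, is a single \emph{master identity} for the transferred perturbation, namely $dA + Ad = -A\,(ip)\,A$. I would derive it by applying $d$ to $A = \de + \de h A$, adding $Ad$, and then repeatedly rewriting $d\de$ as $-\de d - \de^2$ and $dh$ as $ip - \ide - hd$; using the recursion $\de A = \de^2 + \de^2 h A$ to cancel the $\de^2$-terms, the computation collapses to $(\ide - \de h)(dA + Ad) = -\de\,(ip)\,A$, and applying $(\ide - \de h)^{-1}$ on the left yields the identity. This is the \emph{main obstacle}: it is the one place where the perturbation relation and the homotopy relation must be combined carefully, and getting the bookkeeping and signs right is the crux of the argument.

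With the master identity available, the remaining relations reduce to substitution. For $(\partial^1)^2 = 0$, expanding $\partial^1 = \partial + pAi$ and using $\partial p = pd$, $i\partial = di$ and $ip = \ide + dh + hd$, the cross-terms assemble into $p(dA + Ad)i + p\,(A\,ip\,A)\,i$, which vanishes by the master identity. The chain-map conditions $(d+\de)i^1 = i^1\partial^1$ and $p^1(d+\de) = \partial^1 p^1$ go the same way: after using the recursion to replace $\de i + \de h A i$ by $Ai$ (and its mirror for $p^1$), each reduces to an instance of the master identity. The longest verification is the homotopy relation $(d+\de)h^1 + h^1(d+\de) = i^1 p^1 - \ide$; expanding $h^1 = h + hAh$ and simplifying $\de h A h = Ah - \de h$ and $hAh\de = hA - h\de$ via the two recursions, the $\de$-only terms cancel, and a final use of the master identity matches the rest against $i^1 p^1 - \ide = ip + ipAh + hAip + hAipAh - \ide$. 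The quasi-isomorphism property of $i^1$ and $p^1$ then follows from the homotopy $h^1\colon i^1 p^1 \simeq \ide$ together with a comparison on the associated graded of the local-nilpotency filtration, on which $i^1$ and $p^1$ reduce to the original quasi-isomorphisms $i$ and $p$.

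For the second assertion I would assume the special-deformation-retract identities $pi = \ide$, $hi = 0$, $ph = 0$ and $hh = 0$, and check the four perturbed side-conditions directly. Expanding $p^1 i^1 = (p + pAh)(i + hAi)$, every correction term carries a factor $ph$, $hi$ or $hh$ and hence vanishes, leaving $p^1 i^1 = pi = \ide$. The same observation, that each summand of the relevant product contains a vanishing factor among $hi$, $ph$, $hh$, gives $h^1 i^1 = 0$, $p^1 h^1 = 0$ and $h^1 h^1 = 0$ at once. Hence~\eqref{eq3} is again a special deformation retract, which completes the proof.
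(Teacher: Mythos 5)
You should note at the outset that the paper contains no proof of this theorem: it is stated with a citation to \cite{C}, so the only meaningful comparison is with the standard direct-verification argument given there, which is exactly the kind of proof you propose. On the algebraic side your proposal is correct. The recursions $A=\de+\de hA=\de+Ah\de$, the master identity $dA+Ad=-A\,(ip)\,A$ (valid under the convention $dh+hd=ip-\ide$, which is indeed the convention forced by the stated formulas for $\partial^1$, $i^1$, $p^1$, $h^1$), the checks of $(\partial^1)^2=0$, of the two chain-map identities, of the homotopy relation $(d+\de)h^1+h^1(d+\de)=i^1p^1-\ide$, and of the side conditions $p^1i^1=\ide$, $h^1i^1=0$, $p^1h^1=0$, $h^1h^1=0$ in the special-deformation-retract case, all hold exactly as you describe; I verified the master identity and the cancellations you invoke, and they go through.

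The genuine gap is the quasi-isomorphism claim for $i^1$ and $p^1$ in the general (non-retract) case, which you dispatch with ``a comparison on the associated graded of the local-nilpotency filtration.'' This is not an argument. First, the theorem assumes only smallness, i.e.\ invertibility of $\ide-\de h$; local nilpotency of $\de\xcirc h$ is mentioned in the paper only as a remark about the applications, so you may not use it. Second, even when $\de h$ is locally nilpotent, that condition by itself does not produce a filtration of $(X,d+\de)$ by subcomplexes: to run a spectral-sequence comparison you need an actual filtration that $\de$ lowers and that $d$, $h$, $i$, $p$ preserve, together with convergence (boundedness or exhaustiveness/completeness) hypotheses, and you would further need to know that $i$ and $p$ induce quasi-isomorphisms on the associated graded pieces --- none of which is part of the abstract data. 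What you do get for free from $h^1$ is $i^1p^1\simeq\ide$, hence $\Ho(i^1)\xcirc \Ho(p^1)=\ide$, so it suffices to show that \emph{one} of $i^1$, $p^1$ is a quasi-isomorphism; in the special-deformation-retract case your own identity $p^1i^1=\ide$ then closes this formally, and that is in fact the only case the paper ever uses (the perturbation lemma is applied in the proof of Theorem~\ref{th2.4} to a special deformation retract). But for general homotopy equivalence data the quasi-isomorphism property of the perturbed maps does not follow from the operator identities alone, and your proposal as written contains no proof of it.
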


\section{A mixed complex giving the cyclic homology of a crossed product}
Recall that $\Upsilon$ is the family of all epimorphisms of $E$-bimodules which split as a
$(E,K)$-bimodule map. Since $(X_*,d_*)$ is a $\Upsilon$-projective resolution of $E$, the
Hochschild homology of the $K$-algebra $E$, is the homology of $E\ot_{E^e}(X_*,d_*)$. Write
$\wh{X}_{rs} = E\ot_A (E/A)^{\ot_{\!A}^s}\ot \ov{A}^{\ot^r}\ot$. It is easy to check that
$\wh{X}_{rs}\simeq E\ot_{E^e} X_{rs}$. Let $\wh{d}^l_{rs}\colon \wh{X}_{rs}\to
\wh{X}_{r+l-1,s-l}$ be the map induced by $\ide_E\ot_{E^e} d^l_{rs}$. Clearly $\wh{d}^0_{rs}$
is $(-1)^s$ times the boundary map of the normalized chain Hochschild complex of the
$K$-algebra $A$, with coefficients in $E\ot_A (E/A)^{\ot_{\!A}^s}$. Moreover, from
Theorem~\ref{Formula d^1}, it follows easily that
\begin{align*}
\wh{d}^1(\bx) & = \bigl[a_0\gamma(h_0)\gamma(h_1)\ot_A\ov{\gamma(\bh_{2s})} \ot\ba_{1r}\bigr]\\
& + \sum_{i=1}^{s-1} (-1)^i \bigl[a_0\gamma(h_0)\ot_A\ov{\gamma(\bh_{1,i-1})}\ot_A
\gamma(h_i)\gamma(h_{i+1})\ot_A \ov{\gamma(\bh_{i+2,s})} \ot\ba_{1r}\bigr]\\
& +(-1)^s\bigl[\gamma(h_s^{(2)})a_0\gamma(h_0) \ot_A\ov{\gamma(\bh_{1,s-1})}\ot
\ba_{1r}^{h_s^{(1)}}\bigr]
\intertext{and}
\wh{d}^2(\bx) &= (-1)^{s-1}\bigl[\gamma(h_{s-1}^{(2)}h_s^{(2)})a_0\gamma(h_0)\ot_A
\ov{\gamma(\bh_{0,s-2})} \ot f(h_{s-1}^{(1)},h_s^{(1)})\ov{*}\ba_{1r}\bigr],
\end{align*}
where $\bx = \bigl[a_0\gamma(h_0)\ot_A\ov{\gamma(\bh_{1s})}\ot \ba_{1r}\bigr]$ and
$f(h,l)\ov{*}\ba_{1r}$ is as in Theorem~\ref{Formula d^1}. With the above identifications the
complex $E\ot_{E^e}(X_*,d_*)$ becomes $(\wh{X}_*,\wh{d}_*)$, where
$$
\wh{X}_n = \bigoplus_{r+s = n} \wh{X}_{rs}\qquad\text{and}\qquad \wh{d}_n := \sum^n_{l=1}
\wh{d}^l_{0n} + \sum_{r=1}^n \sum^{n-r}_{l=0} \wh{d}^l_{r,n-r}.
$$
Let
$$
\wh{\phi}_*\colon  (\wh{X}_*,\wh{d}_*)\to (E\ot\ov{E}^{\ot^*}\ot,b_*)\qquad\text{and}\qquad
\wh{\psi}_*\colon (E\ot\ov{E}^{\ot^*}\ot,b_*)\to (\wh{X}_*,\wh{d}_*)
$$
be the morphisms of complexes induced by $\phi$ and $\psi$ respectively. By
Proposition~\ref{homotopia}, we have $\wh{\psi}\xcirc\wh{\phi} = \ide$ and
$\wh{\phi}\xcirc\wh{\psi}$ is homotopically equivalent to the identity map, being an homotopy
$\wh{\omega}_{*+1}\colon \wh{\phi}_*\xcirc\wh{\psi}_*\to \ide_*$, the family of maps
$$
\bigl(\wh{\omega}_{n+1}\colon E\ot\ov{E}^{\ot^n}\ot\to E\ot\ov{E}^{\ot^{n+1}}\ot\bigr)_{n\ge
0},
$$
induced by $\bigl(\omega_{n+1}\colon \B_n(E)\to \B_{n+1}(E)\bigr)_{n\ge 0}$.

\subsubsection{The filtrations of $(E\ot\ov{E}^{\ot^*}\ot,b_*)$ and $(\wh{X}_*,\wh{d}_*)$} Let
$$
F^i(\wh{X}_n) = \bigoplus_{0\le s\le i} \wh{X}_{n-s,s}.
$$
and let $F^i(E\ot\ov{E}^{\ot^n}\ot)$ be the $k$-submodule of $E\ot\ov{E}^{\ot^n}\ot$ generated
by the classes of the simple tensors $x_0\ot\cdots\ot x_n$ such that at least $n-i$ of the
elements $x_1,\dots,x_n$ belong to~$A$. The normalized Hochschild complex
$(E\ot\ov{E}^{\ot^*}\ot,b_*)$ and the complex $(\wh{X}_*,\wh{d}_*)$ are filtered by
\begin{align*}
& F^0(E\ot\ov{E}^{\ot^*}\ot)\subseteq F^1(E\ot\ov{E}^{\ot^*}\ot)\subseteq
F^2(E\ot\ov{E}^{\ot^*}\ot) \subseteq\dots
\intertext{and}
& F^0(\wh{X}_*)\subseteq F^1(\wh{X}_*)\subseteq F^2(\wh{X}_*)\subseteq\dots,
\end{align*}
respectively. From \cite[Proposition 1.2.2]{G-G} it follows immediately that the maps
$\wh{\phi}_*$, $\wh{\psi}_*$ and $\wh{\omega}_{*+1}$ preserve filtrations. In Appendix~A we are
going to improve this result.

\smallskip

Let $\wh{V}_n\subseteq \wh{V}'_n$ be the $k$-submodules of $E\ot\ov{E}^{\ot^n}\ot$ generated by
the simple tensors $\bx_{0n}$ such that $\#(\{j\ge 1:x_j\notin A\cup \mathcal{H}\})=0$ and
$\#(\{j\ge 1:x_j\notin A\cup \mathcal{H}\})\le 1$, respectively.

\smallskip

Let $h_1,\dots,h_i\in H$. Recall that $\wt{f}\langle h_1,\dots,h_i\rangle$ is the minimal
$K$-subbimodule of $A$ including $f\langle h_1,\dots,h_i\rangle$ and closed under the weak
action of $H$. We will denote by $\wh{C}_n(h_1,\dots,h_i)$ the $k$-submodule of $E\ot
\ov{E}^{\ot^n}\ot$ generated by the classes of all the simple tensors $x_0\ot\cdots\ot x_n$
with some $x_1,\dots,x_n$ in $\wt{f}\langle h_1,\dots,h_i\rangle$.

\begin{proposition}\label{prop2.1} The map $\wh{\phi}$ satisfies
$$
\wh{\phi}\bigl(\bigl[a_0\gamma(h_0) \ot_A\ov{\gamma(\bh_{1i})}\ot\ba_{1,n-i}\bigr]\bigr)\equiv
\bigl[a_0\gamma(h_0)\ot \gamma(\bh_{1i})\!*\ba_{1,n-i}\bigr] + \bigl[a_0\gamma(h_0)\ot_A
\bx\bigr],
$$
where $\bigl[a_0\gamma(h_0)\ot_A \bx\bigr]\in F^{i-1}(E\ot\ov{E}^{\ot^n}\ot)\cap \wh{V}_n\cap
\wh{C}_n(h_1,\dots,h_i)$.
\end{proposition}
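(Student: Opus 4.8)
The plan is to run an induction on the total degree $n$ through the recursion $\phi_{n+1}(\bx\ot 1)=\xi_{n+1}\xcirc\phi_n\xcirc d_{n+1}(\bx\ot 1)$ defining $\phi$, passed to the quotient complexes $\wh X_*$ and $E\ot\ov E^{\ot^*}\ot$. Write $\bz=[a_0\gamma(h_0)\ot_A\ov{\gamma(\bh_{1i})}\ot\ba_{1,n-i}]\in\wh X_{n-i,i}$. The base case $i=0$ reduces, via $\phi_0=\ide$ and the fact that the $s=0$ part of $d$ is the relative Hochschild differential of $A$, to the statement that $\wh\phi$ is the natural inclusion on the bottom row $\wh X_{n,0}=E\ot\ov A^{\ot^n}\ot$; the empty shuffle $\gamma(\bh_{1,0})*\ba_{1n}$ is the identity, and the error sits in $F^{-1}=0$.

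For the inductive step I would apply $\wh d_n$ to $\bz$ and split it, using Theorem~\ref{Formula d^1}, as $\wh d^0(\bz)+\wh d^1(\bz)+\sum_{l\ge 2}\wh d^l(\bz)$, then apply $\wh\phi_{n-1}$ and append through the descended homotopy $\wh\xi_n$. The summand $\wh d^0(\bz)$ is the relative Hochschild boundary in the $A$-direction (it keeps the $i$ Hopf factors); $\wh d^1(\bz)$ lowers the Hopf count to $i-1$ and consists of the two $\gamma$-merging terms together with the action-pushing term $(-1)^i[\gamma(h_i^{(2)})a_0\gamma(h_0)\ot_A\ov{\gamma(\bh_{1,i-1})}\ot\ba_{1,n-i}^{h_i^{(1)}}]$; and each $\wh d^l(\bz)$ with $l\ge 2$ carries, by the last part of Theorem~\ref{Formula d^1}, a factor in $\wt f\langle h_1,\dots,h_i\rangle$.

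The main term is extracted from $\wh d^0(\bz)$ and the action-pushing term of $\wh d^1(\bz)$: unwinding the recursion, the repeated action of $\wh\xi_n$ interleaves the $i$ Hopf symbols $\gamma(h_1),\dots,\gamma(h_i)$ among the $a_j$'s, the weak action $\ba_{1,n-i}^{h_i^{(1)}}$ furnishing exactly the twist that $*$ carries when a $\gamma$ is shuffled past an $a_j$, so that these contributions recombine into $[a_0\gamma(h_0)\ot\gamma(\bh_{1i})*\ba_{1,n-i}]$. Reconciling the signs coming from $\wh\xi_n$, from the alternating sum in $\wh d^1$, and from the definition of $*$ is the delicate bookkeeping in this part.

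Everything else must be shown to land in $F^{i-1}\cap\wh V_n\cap\wh C_n(h_1,\dots,h_i)$, and this is where I expect the main obstacle. The error seeds, namely the two $\gamma$-merging terms of $\wh d^1$ (which create a slot $f(h_j^{(1)},h_{j+1}^{(1)})\gamma(h_j^{(2)}h_{j+1}^{(2)})$) and the terms $\wh d^l(\bz)$ with $l\ge 2$, each lower the filtration and carry an $f$-value, so that retaining the $f$-value gives membership in $\wh C_n$. The trouble is the condition $\wh V_n$: a merged slot $f(\cdot,\cdot)\gamma(\cdot)$ is an element of $E$ of the form $a\#h$ with both parts nontrivial, a single factor lying outside $A\cup\mathcal H$, so read naively the error would only reach $\wh V'_n$. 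The resolution is that $\wh\phi_{n-1}$ expands such a slot into two separate factors, the $f$-value in $A$ and a single $\gamma$ in $\mathcal H$, leaving no factor outside $A\cup\mathcal H$; since the intermediate terms of the recursion are no longer of the template form of $\bz$ (their heads and slots already contain products of two $\gamma$'s), this cannot be checked term by term but must be packaged as preservation statements for $\wh\phi_*$, $\wh\psi_*$ and $\wh\omega_{*+1}$ guaranteeing that the expansion never creates a second factor outside $A\cup\mathcal H$ nor destroys the $f$-value, the weak action keeping it inside $\wt f\langle h_1,\dots,h_i\rangle$. Establishing these refined preservation properties is precisely the improvement of \cite[Proposition 1.2.2]{G-G} announced in Appendix~A.
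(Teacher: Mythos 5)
Your overall inductive mechanism --- splitting the differential as $\wh{d}^0+\wh{d}^1+\sum_{l\ge 2}\wh{d}^l$ via Theorem~\ref{Formula d^1}, extracting the main term from the last summand of $\wh{d}^0$ together with the action-pushing term of $\wh{d}^1$, and pushing everything else into filtration $\le i-1$ while retaining an $f$-value --- is exactly the shape of the paper's argument. But the frame you put it in fails at the first step: there is no ``descended homotopy $\wh{\xi}$''. The map $\xi_n(\bx)=(-1)^n\bx\ot 1$ is only left $E$-linear and right $K$-linear, not right $E$-linear, so it does not induce a map on $E\ot_{E^e}\B_n(E)=E\ot\ov{E}^{\ot^n}\ot$. (If it did, the contraction of the bar resolution would descend and the Hochschild complex of the $K$-algebra $E$ would be acyclic in positive degrees, which is absurd; this non-descent is the very reason cyclic homology is nontrivial.) Concretely, $x_0\ot x_1$ and $x_1x_0\ot 1$ have the same class in $E\ot_{E^e}\B_0(E)$, yet $\xi$ sends them to elements whose classes are $[x_0\ot x_1]$ and $0$. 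Consequently the recursion $\phi_{n+1}(\bx\ot 1)=\xi_{n+1}\xcirc\phi_n\xcirc d_{n+1}(\bx\ot 1)$ has no meaning in the quotient complexes, and your induction cannot even be set up there. The paper avoids this by proving the congruence entirely at the level of the resolutions, where $\phi$ is an $E$-bimodule map and $\xi$ exists: this is Proposition~\ref{propA.5}, asserting $\phi\bigl(1\ot_A\ov{\gamma(\bh_{1i})}\ot\ba_{1,n-i}\ot 1\bigr)\equiv 1\ot\gamma(\bh_{1i})*\ba_{1,n-i}\ot 1$ modulo $F^{i-1}(\B_n(E))\cap V_n\cap C_n$; Proposition~\ref{prop2.1} is then deduced at the very end by applying $E\ot_{E^e}-$.

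Once the induction is run at the resolution level, your worry about $\wh{V}_n$ versus $\wh{V}'_n$ also dissolves, and not by the mechanism you propose (having $\phi$ ``expand'' a merged slot). The merging terms of $d^1$, of the form $1\ot_A\cdots\ot_A\gamma(h_j)\gamma(h_{j+1})\ot_A\cdots\ot\ba_{1,n-i}\ot 1$, are not error seeds at all: writing $\gamma(h_j)\gamma(h_{j+1})=f(h_j^{(1)},h_{j+1}^{(1)})\gamma(h_j^{(2)}h_{j+1}^{(2)})$ and pushing the cocycle value leftwards across the $\ot_A$'s (using the weak action) shows these terms lie in $E\,L_{n-i,i-1}$, and $\phi(EL_{*,*})\subseteq E\ot\ov{E}^{\ot^{n-1}}\ot K\subseteq\ker(\xi)$, so they are annihilated outright. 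The genuine error terms come only from $d^l$ with $l\ge 2$, and there the last part of Theorem~\ref{Formula d^1} together with Lemma~\ref{leA.4} (namely $d^l(L_{rs})\subseteq U_{r+l-1,s-l}$) shows the $f$-values already occur in separate $A$-slots with a single $\gamma$ at the right end, so the inductive hypothesis applies directly and the contribution lands in $V_n\cap C_n$. These two resolution-level facts --- $\phi(EL_{*,*})\subseteq\ker(\xi)$ and Lemma~\ref{leA.4} --- are the key lemmas missing from your proposal; the ``refined preservation statements'' you defer to the appendix reduce, for this proposition, precisely to them, and they concern $\phi$ alone, not $\psi$ or $\omega$.
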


\begin{proof} See Appendix~A.
\end{proof}

\begin{proposition}\label{prop2.2} If $\bx = [1\ot\bx_{1n}]\in \bigl(F^i(E\ot \ov{E}^{\ot^n}\ot)
\cap \wh{V}'_n\bigr)$, then
$$
\wh{\omega}(\bx) \in (K\ot \ov{E}^{\ot^{n+1}})\cap F^i(E\ot\ov{E}^{\ot^{n+1}}\ot)\cap
\wh{V}_{n+1}.
$$
\end{proposition}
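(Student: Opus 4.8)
The plan is to argue by induction on $n$, using the recursive description of $\wh\omega$ furnished by Proposition~\ref{homotopia} together with the Remark following it. The inclusion $\wh\omega(\bx)\in F^i(E\ot\ov E^{\ot^{n+1}}\ot)$ is free, since it is already recorded (and sharpened in Appendix~A) that $\wh\omega$ preserves the filtration $F^*$; so the task reduces to the two memberships $\wh\omega(\bx)\in K\ot\ov E^{\ot^{n+1}}$ (leading factor in $K$) and $\wh\omega(\bx)\in\wh{V}_{n+1}$ (all inner factors in $A\cup\mathcal{H}$). The base case follows from $\wh\omega_1=0$.

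For the inductive step I would transcribe the Remark on the induced complex. Lifting $\bx=[1\ot\bx_{1n}]$ to $1\ot\bx_{1n}\ot 1\in\B_n(E)$, one has $\wh\omega(\bx)=[\omega(1\ot\bx_{1n}\ot 1)]$ with $\omega(1\ot\bx_{1n}\ot 1)=\xi\bigl(\phi\psi(1\ot\bx_{1n}\ot 1)\bigr)-(-1)^n\xi\bigl(\omega(1\ot\bx_{1n})\bigr)$. Since $\phi$ and $\psi$ are right $E$-linear, $\phi\psi$ sends the right tensor factor $1$ to a right tensor factor $1$, which $\xi$ then moves into the normalized bar string, where it becomes $\ov 1=0$; hence the first summand dies in $E\ot\ov E^{\ot^{n+1}}\ot$. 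Thus $\wh\omega(\bx)$ is, up to sign, the class of $\xi\bigl(\omega(1\ot\bx_{1n})\bigr)$, which rewrites $\wh\omega$ at level $n$ in terms of $\omega$ at the preceding level and drives the induction.

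The membership in $\wh{V}_{n+1}$ I would read off from the explicit shapes of the maps. By Proposition~\ref{prop2.1} the image of $\wh\phi$ is spanned by tensors whose inner entries are of the form $\gamma(h_j)\in\mathcal{H}$ or $a_j\in A$, and the target $\wh{X}_{rs}$ of $\wh\psi$ is assembled from precisely such entries; so $\wh\phi\wh\psi$ rewrites any argument into a sum of tensors lying in $\wh{V}$. The lone inner entry of $\bx$ permitted to fall outside $A\cup\mathcal{H}$ (the defining slack of $\wh{V}'_n$) is exactly what this rewriting absorbs, while $\xi$ only appends the controlled entry $1$; together with the inductive hypothesis applied to the lower term $\xi\bigl(\omega(1\ot\bx_{1n})\bigr)$ this yields $\wh\omega(\bx)\in\wh{V}_{n+1}$.

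The leading-factor statement $\wh\omega(\bx)\in K\ot\ov E^{\ot^{n+1}}$ is the delicate point, and the step I expect to be the main obstacle. Here one uses that $\xi$ appends on the right and so leaves the leading factor untouched, while left $E$-linearity of $\omega$ transports the unit leading factor of $\bx=[1\ot\cdots]$ through the recursion. The subtlety is that the bar differential concealed inside $\psi$ can merge $x_0x_1$ and thereby threaten the unit leading factor; controlling this forces one to work modulo the commutators $[-,K]$ defining $M\ot$, so that the leading coefficient produced by the contracting homotopy may be reabsorbed into $K$. Carrying out this absorption—tracking the leading factor through the composite $\wh\phi\wh\psi$ and the contracting homotopy simultaneously with the $\wh{V}$- and $F^i$-bookkeeping—is the heart of the matter; what remains are the routine sign and index verifications built into the Remark's recursion.
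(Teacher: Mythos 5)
Your overall skeleton (induction on $n$ driven by the recursion $\omega(\bx_{0n}\ot 1) = \xi\bigl(\phi\xcirc\psi(\bx_{0n}\ot 1) - (-1)^n\omega(\bx_{0n})\bigr)$ from the Remark after Proposition~\ref{homotopia}) matches the paper's, but the step on which you rest everything is false. You claim that because $\phi$ and $\psi$ are right $E$-linear, $\phi\xcirc\psi(1\ot\bx_{1n}\ot 1)$ again ends in a tensor factor $1$, so that $\xi$ annihilates it. Bimodule maps do not preserve the property of ``ending in $1$'': the image of a generator is a sum $\sum_\alpha e'_\alpha\,(1\ot\byy^\alpha\ot 1)\,e''_\alpha$, and the right factors $e''_\alpha$ land inside the last tensor slot. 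Concretely, by Proposition~\ref{propA.7}(3) the map $\psi$ applied to $1\ot\gamma(\bh_{1,i-1})\ot a_i\gamma(h_i)\ot\ba_{i+1,n}\ot 1$ has a summand ending in $\gamma(h_i^{(2)})\in\mathcal{H}$, and correspondingly (Lemma~\ref{leA.8}(3)) $\phi\xcirc\psi$ produces terms ending in $\gamma(h_i^{(2)})$, which $\xi$ does \emph{not} kill since they do not end in $K$. These surviving terms are precisely the nonzero content of $\omega$; indeed, if your claim were true, the recursion together with $\omega_1=0$ would force $\omega\equiv 0$ on all generators, i.e.\ $\phi\xcirc\psi=\ide$, which is false. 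So the term $\xi\xcirc\phi\xcirc\psi(\bx)$ that you discard is the main term, and controlling it---which the paper does through the explicit formulas for $\phi$ (Proposition~\ref{propA.5}) and $\psi$ (Proposition~\ref{propA.7}), assembled in Lemma~\ref{leA.8} to show $\xi\xcirc\phi\xcirc\psi(\bx)\in F^i(\B_{n+1}(E))\cap V_{n+1}$---is the actual heart of the proof; none of that analysis appears in your argument.

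Two further points. First, what you single out as ``the delicate point,'' the membership $\wh{\omega}(\bx)\in K\ot\ov{E}^{\ot^{n+1}}$, is in fact the easy part: the paper proves the statement at the level of the resolution (Proposition~\ref{propA.9}), where $\omega$ of a bimodule generator lands in $V_{n+1}$, the $k$-span of tensors of the form $1\ot\cdots\ot 1$; passing to $E\ot_{E^e}(-)$ then automatically gives leading factor $1\in K$, with no commutator bookkeeping needed. Your proposal defers exactly this point without proving it, while misidentifying where the difficulty lies. Second, handling the term $\xi\bigl(\omega(1\ot\bx_{1n})\bigr)$ requires more than ``apply the inductive hypothesis'': one must write $\omega(1\ot\bx_{1n}) = \omega(1\ot\bx_{1,n-1}\ot 1)x_n$ by right $E$-linearity and split into the cases $x_n\in A$, $x_n\in\mathcal{H}$, $x_n\notin A\cup\mathcal{H}$, the last case resting on the auxiliary claim (proved inside Proposition~\ref{propA.9}) that $\omega$ vanishes identically on $R_i\cap V_n$; this claim, which also needs Lemma~\ref{leA.8}(1)--(2), is never formulated in your proposal.
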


\begin{proof} See Appendix~A.
\end{proof}

\begin{lemma}\label{le2.3} Let $B_*\colon E\ot\ov{E}^{\ot^*}\ot\to E\ot\ov{E}^{\ot^{*+1}}\ot$
be the Connes operator. The composition $B \xcirc \wh{\omega}\xcirc B\xcirc \wh{\phi}$ is the
zero map.
\end{lemma}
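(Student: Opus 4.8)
The plan is to prove the sharper statement that $\wh\omega\xcirc B\xcirc\wh\phi$ already takes its values in the submodule generated by the classes $[\kappa\ot y_1\ot\cdots\ot y_m]$ with $\kappa\in K$ (that is, in $K\ot\ov{E}^{\ot^*}$), and then to remark that $B$ vanishes identically on this submodule. Note that one cannot simply invoke $B\xcirc B=0$, since $\wh\omega$ is not the identity; the argument must instead track the specific form of the intermediate terms supplied by Propositions~\ref{prop2.1} and~\ref{prop2.2}. For the vanishing of $B$ on $K\ot\ov{E}^{\ot^m}$, take $w=[\kappa\ot y_1\ot\cdots\ot y_m]$ with $\kappa\in K$ and $y_1,\dots,y_m\in\ov{E}$. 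Each summand of $B(w)$ is a class $[1\ot y_j\ot\cdots\ot y_m\ot\kappa\ot y_1\ot\cdots\ot y_{j-1}]$ in which $\kappa$ occupies a slot of index $\ge 1$; since those slots take values in $\ov{E}=E/K$ and $\kappa$ maps to $0$ there, every summand is zero. Hence $B$ kills $K\ot\ov{E}^{\ot^m}\ot$.

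It remains to see that $\wh\omega\xcirc B\xcirc\wh\phi$ lands in $K\ot\ov{E}^{\ot^*}$, which I would establish by following the three factors in turn. First, Proposition~\ref{prop2.1} shows that the image of $\wh\phi$ lies in $\wh{V}_*$: on a generator $\bx=[a_0\gamma(h_0)\ot_A\ov{\gamma(\bh_{1i})}\ot\ba_{1,n-i}]$ the leading term $[a_0\gamma(h_0)\ot\gamma(\bh_{1i})*\ba_{1,n-i}]$ involves only factors in $\mathcal H$ (the $\gamma(h_j)$) and in $A$ (the $a_j$), hence lies in $\wh{V}_n$, while the correction term lies in $\wh{V}_n$ by the explicit statement of the proposition. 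Second, $B$ carries $\wh{V}_n$ into $\wh{V}'_{n+1}$ and outputs classes beginning with $1$: in each summand $[1\ot x_j\ot\cdots\ot x_{j-1}]$ of $B(\wh\phi(\bx))$ the only entry that can fail to belong to $A\cup\mathcal H$ is the single original zeroth entry, so at most one factor is ``bad''. Thus $B\xcirc\wh\phi(\bx)$ is a sum of classes of the form $[1\ot\cdots]$ lying in $\wh{V}'_{n+1}$ (and, trivially, in $F^{n+1}$), which is exactly the shape required by Proposition~\ref{prop2.2}. That proposition then gives $\wh\omega\xcirc B\xcirc\wh\phi(\bx)\in K\ot\ov{E}^{\ot^{n+2}}$, and applying the first paragraph yields $B\xcirc\wh\omega\xcirc B\xcirc\wh\phi(\bx)=0$.

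The main obstacle is not any single computation but the bookkeeping that interlocks the three preceding results: one must verify that the output of each map meets the hypotheses of the next --- in particular that $B$ produces normalized tensors starting with $1$ and raises the $\wh{V}$-defect by exactly one, so that Proposition~\ref{prop2.2} is applicable, and that the final vanishing is read correctly in the quotient $E\ot\ov{E}^{\ot^*}\ot$, where every entry past the zeroth lives in $\ov{E}=E/K$. Once these compatibilities are checked the conclusion is immediate.
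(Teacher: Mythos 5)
Your proof is correct and takes essentially the same route as the paper's: Proposition~\ref{prop2.1} places the image of $\wh{\phi}$ in $\wh{V}_n$, the cyclic form of $B$ then yields classes in $(K\ot\ov{E}^{\ot^{n+1}})\cap \wh{V}'_{n+1}$ beginning with $1$, Proposition~\ref{prop2.2} lands the result in $K\ot\ov{E}^{\ot^{n+2}}$, and $B$ vanishes there. The only (harmless) differences are that you invoke the trivial filtration bound where the paper tracks $F^{i+1}$, and that you spell out the vanishing of $B$ on $K\ot\ov{E}^{\ot^{*}}\ot$, which the paper leaves implicit.
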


\begin{proof} Let $\bx = \bigl[a_0\gamma(h_0)\ot_A\ov{\gamma(\bh_{1i})}\ot\ba_{1,n-i}\bigr]\in
\wh{X}_{n-i,i}$. By Proposition~\ref{prop2.1},
$$
\wh{\phi}(\bx)\in F^i(E\ot \ov{E}^{\ot^n}\ot)\cap \wh{V}_n.
$$
Hence $B\xcirc \wh{\phi}(\bx)\in (K\ot \ov{E}^{\ot^{n+1}}) \cap F^{i+1}(E\ot
\ov{E}^{\ot^{n+1}}\ot) \cap \wh{V}'_{n+1}$, and so, by Proposition~\ref{prop2.2},
$$
\wh{\omega}\xcirc B\xcirc \wh{\phi}(\bx)\in (K\ot \ov{E}^{\ot^{n+1}}\ot)\cap F^{i+1}(E\ot
\ov{E}^{\ot^{n+1}}\ot) \cap \wh{V}_{n+2}\subseteq \ker{B},
$$
as desired.
\end{proof}

For each $n\ge 0$, let $\wh{D}_n\colon \wh{X}_n\to \wh{X}_{n+1}$ be the map $\wh{D} =
\wh{\psi}\xcirc B \xcirc \wh{\phi}$.

\begin{theorem}\label{th2.4} $\bigl(\wh{X},\wh{d},\wh{D}\bigr)$ is a mixed complex giving the
Hochschild, cyclic, negative and periodic homology of the $K$-algebra  $E$. Moreover we have
chain complexes maps
\begin{equation*}
\xymatrix{{}\save[]+<-39pt,0pt>\Drop{\Tot\bigl(\BP(\wh{X},\wh{d},\wh{D})\bigr)}\restore
\ar@<-1ex>[rr]_-{\wh{\Phi}} && {{}\save[]+<58pt,0pt> \Drop{\Tot\bigl(\BP(E\ot\ov{E}^{\ot^*}\ot,
b,B)\bigr)}} \restore \ar@<-1ex>[ll]_-{\wh{\Psi}}},
\end{equation*}
given by
$$
\wh{\Phi}_n(\bx u^i) = \wh{\phi}(\bx)u^i + \wh{\omega}\xcirc B\xcirc\wh{\phi}(\bx)u^{i-1}\quad
\text{and}\quad \wh{\Psi}_n (\bx u^i) = \sum_{j\ge 0} \wh{\psi}\xcirc (B\xcirc\wh{\omega})^j
(\bx) u^{i-j}.
$$
These maps satisfy $\wh{\Psi}\xcirc \wh{\Phi} = \ide$ and and $\wh{\Phi}\xcirc\wh{\Psi}$ is
homotopically equivalent to the identity map. A homotopy $\wh{\Omega}_{*+1}\colon
\wh{\Phi}_*\xcirc\wh{\Psi}_*\to \ide_*$ is given by
$$
\wh{\Omega}_{n+1}(\bx u^i) = \sum_{j\ge 0}\wh{\omega}\xcirc (B\xcirc\wh{\omega})^j(\bx)u^{i-j}.
$$
\end{theorem}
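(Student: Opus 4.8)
The plan is to realize the statement as an instance of the Perturbation Lemma (Theorem~\ref{lema de perturbacion}) applied to the double complex $\BP$, rather than verifying the three claimed identities by direct computation. The key observation is that the data already assembled---the deformation retract $(\wh{\phi},\wh{\psi},\wh{\omega})$ between $(\wh{X}_*,\wh{d}_*)$ and the normalized Hochschild complex $(E\ot\ov{E}^{\ot^*}\ot,b_*)$, together with the Connes operator $B$---is exactly what is needed to perturb the ``trivial'' double complex (with only vertical $b$-differentials) into the full $\BP$ by turning on the horizontal operator $B$. I would set up the homotopy equivalence data of chain complexes at the level of total complexes: on the Hochschild side take $\Tot$ of the columns $(E\ot\ov{E}^{\ot^*}\ot,b)$ with zero horizontal maps, and let $\de$ be the operator induced by $B$ (which lowers total degree by one after the $u$-bookkeeping). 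The maps $i,p,h$ are the $u$-linear extensions of $\wh{\psi},\wh{\phi},\wh{\omega}$.

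First I would check that $\de=B$ is a \emph{small} perturbation: since $B$ strictly increases the filtration degree $F^i$ (it sends $F^i$ into $F^{i+1}$ by the filtration estimates preceding Proposition~\ref{prop2.1}) while $\wh{\omega}$ preserves it, the composite $\de\xcirc h$ is locally nilpotent on each $u$-homogeneous piece, so $(\ide-\de\xcirc h)^{-1}=\sum_n(\de\xcirc h)^n$ converges as a finite sum in each degree. Then the perturbed maps of Theorem~\ref{lema de perturbacion} are precisely
$$
i^1 = \wh{\psi}+\wh{\omega}\xcirc A\xcirc\wh{\psi},\quad p^1 = \wh{\phi}+\wh{\phi}\xcirc A\xcirc\wh{\omega},\quad h^1 = \wh{\omega}+\wh{\omega}\xcirc A\xcirc\wh{\omega},
$$
with $A=(\ide-B\xcirc\wh{\omega})^{-1}\xcirc B$. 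The heart of the matter is to show these collapse to the stated closed forms. Expanding $A$ as a geometric series gives $A=\sum_{j\ge 0}(B\xcirc\wh{\omega})^j\xcirc B$, and the displayed formulas for $\wh{\Phi},\wh{\Psi},\wh{\Omega}$ should emerge after matching the $u$-degree shifts; the identities $\wh{\Psi}\xcirc\wh{\Phi}=\ide$ and the homotopy relation for $\wh{\Omega}$ then come for free from the ``special deformation retract'' conclusion of Theorem~\ref{lema de perturbacion}, provided the unperturbed data is special.

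The main obstacle I anticipate is reconciling the telescoping series for $A$ with the comparatively simple closed forms in the statement, where $\wh{\Phi}$ involves only \emph{one} application of $\wh{\omega}\xcirc B$ while $\wh{\Psi}$ and $\wh{\Omega}$ carry the full sum $\sum_{j\ge 0}(B\xcirc\wh{\omega})^j$. This asymmetry is exactly what Lemma~\ref{le2.3} is designed to resolve: since $B\xcirc\wh{\omega}\xcirc B\xcirc\wh{\phi}=0$, all higher terms $(B\xcirc\wh{\omega})^j$ for $j\ge 2$ annihilate anything in the image of $\wh{\phi}$, so $p^1=\wh{\phi}\xcirc A\xcirc\wh{\omega}+\wh{\phi}$ truncates after the first correction term, yielding $\wh{\Phi}_n(\bx u^i)=\wh{\phi}(\bx)u^i+\wh{\omega}\xcirc B\xcirc\wh{\phi}(\bx)u^{i-1}$. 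I would therefore invoke Lemma~\ref{le2.3} at precisely this truncation step.

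Finally I would verify that $(\wh{X},\wh{d},\wh{D})$ with $\wh{D}=\wh{\psi}\xcirc B\xcirc\wh{\phi}$ really is a mixed complex---i.e.\ $\wh{D}\xcirc\wh{D}=0$ and $\wh{d}\xcirc\wh{D}+\wh{D}\xcirc\wh{d}=0$. The second relation follows because $\wh{\phi},\wh{\psi}$ are chain maps and $B\xcirc b+b\xcirc B=0$; the relation $\wh{D}^2=0$ uses $\wh{\psi}\xcirc\wh{\phi}=\ide$ together again with Lemma~\ref{le2.3}, since $\wh{D}^2=\wh{\psi}\xcirc B\xcirc(\wh{\phi}\xcirc\wh{\psi})\xcirc B\xcirc\wh{\phi}$ and the homotopy $\wh{\omega}$ correcting $\wh{\phi}\xcirc\wh{\psi}-\ide$ contributes a term $B\xcirc\wh{\omega}\xcirc B\xcirc\wh{\phi}$ that vanishes by the lemma. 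The claim that this mixed complex computes the four homologies of the $K$-algebra $E$ is then the assertion that $\wh{\Phi},\wh{\Psi}$ are mutually inverse up to homotopy on $\BP$, which is the content of the perturbed homotopy equivalence.
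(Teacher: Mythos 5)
Your strategy coincides with the paper's own proof: treat $(\wh{\phi},\wh{\psi},\wh{\omega})$, extended $u$-linearly, as a deformation retract between total complexes with zero horizontal differential, perturb by $B$, and use Lemma~\ref{le2.3} to truncate the geometric series so that the perturbed differential becomes $\wh{d}+\wh{D}$ and the perturbed maps become $\wh{\Phi}$, $\wh{\Psi}$, $\wh{\Omega}$. However, two steps fail as written. The first is your smallness argument: the fact that $B$ sends $F^i$ into $F^{i+1}$ while $\wh{\omega}$ preserves the filtration gives no nilpotency whatsoever, since the filtration is increasing and exhaustive --- after $k$ applications of $B\xcirc\wh{\omega}$ a class of homological degree $n$ sits in $F^{n+k}$ of degree $n+2k$, which is a nonzero submodule for every $k$. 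The true source of smallness is the $u$-bookkeeping: $\de\xcirc h$ lowers the $u$-degree by one, and in $\BC$ the $u$-degrees are bounded below by $0$, so $\de\xcirc h$ is locally nilpotent there. This is precisely why the paper applies the perturbation lemma to $\Tot\bigl(\BC(\cdot)\bigr)$ rather than directly to $\Tot\bigl(\BP(\cdot)\bigr)$, as you propose: in $\BP$ the $u$-degrees are unbounded below and local nilpotency genuinely fails, so one needs a separate argument --- the paper checks that $\wh{\Phi}$, $\wh{\Psi}$, $\wh{\Omega}$ commute with the canonical surjections $\Tot\bigl(\BC(\cdot)\bigr)\to \Tot\bigl(\BC(\cdot)\bigr)[2]$ and passes to the limit, which simultaneously settles $\BP$, $\BN$, and hence the claim about negative and cyclic homology, a point your proposal does not address at all.

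The second defect is that you have interchanged $i$ and $p$. Since the perturbation $B$ and the homotopy $\wh{\omega}$ live on the Hochschild side, that side must be $(X,d)$ in Theorem~\ref{lema de perturbacion}; hence $i$ extends $\wh{\phi}$ and $p$ extends $\wh{\psi}$. With your assignment the expressions $\wh{\omega}\xcirc A\xcirc\wh{\psi}$ and $\wh{\phi}\xcirc A\xcirc\wh{\omega}$ do not even compose, because $A$ is an operator on the Hochschild side. This is not merely cosmetic: Lemma~\ref{le2.3} truncates $i^1=\wh{\phi}+\wh{\omega}\xcirc A\xcirc\wh{\phi}$ because every term with $j\ge 1$ ends in $B\xcirc\wh{\omega}\xcirc B\xcirc\wh{\phi}=0$, whereas it has no effect on $p^1=\wh{\psi}+\wh{\psi}\xcirc A\xcirc\wh{\omega}$ (nor on your hybrid $\wh{\phi}+\wh{\phi}\xcirc A\xcirc\wh{\omega}$); that asymmetry is exactly why $\wh{\Phi}$ has two terms and $\wh{\Psi}$ keeps the whole series. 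Your final formulas are the correct ones, but they do not follow from the formulas you wrote down. Your direct verification that $(\wh{X},\wh{d},\wh{D})$ is a mixed complex, using $bB+Bb=0$, $\wh{\psi}\xcirc\wh{\phi}=\ide$ and Lemma~\ref{le2.3}, is sound (in the paper this also falls out of the identification of the perturbed differential), but the smallness step and the derivation of the closed formulas need to be repaired along the lines above before the argument is complete.
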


\begin{proof} For each $i\ge 0$, let
\begin{align*}
& \wh{\phi}u^i\colon \wh{X}_{n-21}u^i\to \bigl(E\ot\ov{E}^{\ot^{n-2i}}\ot\bigr)u^i,\\
& \wh{\phi}u^i\colon \bigl(E\ot\ov{E}^{\ot^{n-2i}}\ot\bigr)u^i\to \wh{X}_{n-21}u^i
\intertext{and}
& \wh{\omega}u^i\colon \bigl(E\ot\ov{E}^{\ot^{n-2i}}\ot \bigr)u^i\to \bigl(E\ot\ov{E}^
{\ot^{n+1-2i}}\ot \bigr)u^i,
\end{align*}
be the maps defined by $\wh{\phi}u^i(\bx u^i) = \wh{\phi}(\bx)u^i$, etcetera. By the comments
preceding Lemma~\ref{le2.3}, we have a special deformation retract
\[
\xymatrix{{}\save[]+<-39pt,0pt>\Drop{\Tot\bigl(\BC(\wh{X},\wh{d},0)\bigr)}\restore
\ar@<-1ex>[rr]_-{\bigoplus_{i\ge 0}\wh{\phi}u^i} && {{}\save[]+<52pt,0pt>\Drop{
\Tot\bigl(\BC(E\ot\ov{E}^{\ot^*}\ot,b,0)\bigr)}} \restore \ar@<-1ex>[ll]_-{\bigoplus_{i\ge
0}\wh{\psi}u^i}},\qquad \bigoplus_{i\ge 0}\wh{\omega}u^i.
\]
By applying the perturbation lemma to this datum endowed with the perturbation induced by $B$,
and taking into account Lemma~\ref{le2.3}, we obtain the special deformation retract
\begin{equation}
\xymatrix{{}\save[]+<-39pt,0pt>\Drop{\Tot\bigl(\BC(\wh{X},\wh{d},\wh{D})\bigr)}\restore
\ar@<-1ex>[rr]_-{\wh{\Phi}} && {{}\save[]+<52pt,0pt>\Drop{\Tot\bigl(\BC(E\ot \ov{E}^{\ot^*}\ot,
b,B)\bigr)}} \restore \ar@<-1ex>[ll]_-{\wh{\Psi}}},\qquad \wh{\Omega}.
\end{equation}
It is easy to see that $\wh{\Phi}$, $\wh{\Psi}$ and $\wh{\Omega}$ commute with the canonical
surjections
\begin{equation}
\Tot\bigl(\BC(\wh{X},\wh{d},\wh{D})\bigr)\to \Tot\bigl(\BC(\wh{X},\wh{d},\wh{D}) \bigr)[2]
\end{equation}
and
\begin{equation}
\Tot\bigl(\BC(E\ot\ov{E}^{\ot^*}\ot,b,B)\bigr)\to \Tot\bigl(\BC(E\ot \ov{E}^{\ot^*}\ot,
b,B)\bigr)[2].
\end{equation}
An standard argument, from these facts, finishes the proof.
\end{proof}

Let $h_1,\dots,h_i\in H$. In the sequel we let $\wh{J}_n(h_1,\dots,h_i)$ and
$H\wh{J}_{n+1}(h_1,\dots,h_i)$ denote the $k$-submodules of $\wh{X}_n$ generated by all the
classes of simple tensors \mbox{$\ov{x_{0s}}\ot\ba_{1,n-s}$} with $0\le s<n$ and some $a_j$ in
$f\langle h_1,\dots,h_i\rangle$, and for all the classes of simple tensors $\ov{x_{0s}}\ot
\ba_{1,n-s}$ with $0\le s<n$ and some $a_j$ in $\wt{f}\langle h_1,\dots,h_i\rangle$,
respectively.

\begin{proposition}\label{prop2.5} Let $\wh{R}_i = F^i(E\ot\ov{E}^{\ot^n}\ot)\setminus
F^{i-1}(E\ot\ov{E}^{\ot^n}\ot)$. The following equalities hold:

\begin{enumerate}

\smallskip

\item $\wh{\psi}\bigl(\bigl[a_0\gamma(h_0)\ot\gamma(\bh_{1i})\ot\ba_{i+1,n}\bigr]\bigr) =
\bigl[a_0\gamma(h_0)\ot_A \ov{\gamma(\bh_{1i})} \ot\ba_{i+1,n}\bigr]$.

\smallskip

\item If $\bx_{0n}\in \wh{R}_i\cap \wh{V}_n$ and there is $1\le j\le i$ such
that~$x_j\in A$, then~\hbox{$\wh{\psi}(\bx_{0n}) = 0$}.

\smallskip

\item If $\bx = \bigl[a_0\gamma(h_0)\ot \gamma(\bh_{1,i-1})\ot a_i\gamma(h_i)\ot \ba_{i+1,n}
\bigr]$, then
\begin{align*}
\qquad\qquad \wh{\psi}(\bx) & \equiv \bigl[a_0\gamma(h_0)\ot_A \ov{\gamma(\bh_{1,i-1})}\ot_A
a_i\gamma(h_i)\ot\ba_{i+1,n}\bigr]\\
& + \bigl[\gamma(h_i^{(2)})a_0\gamma(h_0)\ot_A \ov{\gamma(\bh_{1,i-1})}\ot a_i\ot
\ba_{i+1,n}^{h_i^{(1)}}\bigr],
\end{align*}
module $\bigoplus_{l=0}^{i-2} \bigl(\wh{X}_{n-l,l}\cap \wh{J}_n(h_1,\dots,h_i)\bigr)$.

\smallskip

\item If $\bigl[\bx = a_0\gamma(h_0)\ot \gamma(\bh_{1,j-1})\ot a_j\!h_j\ot\gamma(\bh_{j+1,i})
\ot \ba_{i+1,n} \bigr]$ with $j<i$, then
$$
\qquad\qquad \wh{\psi}(\bx)\equiv \bigl[a_0\gamma(h_0)\ot_A \ov{\gamma(\bh_{1,j-1})}\ot_A a_j
\gamma(h_j) \ot_A \ov{\gamma(\bh_{j+1,i})}\ot\ba_{i+1,n}\bigr],
$$
module $\bigoplus_{l=0}^{i-2} \bigl(\wh{X}_{n-l,l}\cap \wh{J}_n(h_1,\dots,h_i)\bigr)$.

\smallskip

\item If $\bx = \bigl[a_0\gamma(h_0)\ot \gamma(\bh_{1,i-1}) \ot \ba_{i,j-1}\ot a_j\gamma(h_j)
\ot\ba_{j+1,n}\bigr]$ with $j>i$, then
$$
\qquad\qquad \wh{\psi}(\bx) \equiv \bigl[\gamma(h_j^{(2)}) a_0\gamma(h_0)\ot_A
\ov{\gamma(\bh_{1,i-1})} \ot \ba_{ij}\ot \ba_{j+1,n}^{h_j^{(1)}}\bigr],
$$
module $\bigoplus_{l=0}^{i-2} \bigl(\wh{X}_{n-l,l}\cap \wh{J}_n(h_1,\dots,h_{i-1},h_j)\bigr)$.

\smallskip

\item If $\bx_{0n}\in \wh{R}_i\cap \wh{V}'_n$ and there exists $1\le j_1 < j_2\le n$
such that $x_{j_1}\in A$ and $x_{j_2}\in \mathcal{H}$, then $\wh{\psi}(\bx_{0n})=0$.

\end{enumerate}

\end{proposition}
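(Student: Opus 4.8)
The plan is to establish all six identities simultaneously by induction on the homological degree $n$, exploiting the reduced version of the recursion $\psi_n = \ov\sigma_n\xcirc\psi_{n-1}\xcirc b'_n$ of Proposition~\ref{homotopia}, in which $\ov\sigma$ is the explicit contracting homotopy of Proposition~\ref{cont nuestra}. The guiding principle is that the submodules $\wh J_n(\dots)$ collect exactly the ``$f$-tails'' that the recursion produces, and these arise from two sources. First, whenever the Hochschild differential multiplies two adjacent factors, one of which is a $\gamma$, the product rule $\gamma(h)\gamma(l) = f(h^{(1)},l^{(1)})\gamma(h^{(2)}l^{(2)})$ in $E$ inserts a factor in $f\langle\dots\rangle$. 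Second, $\ov\sigma$ is assembled from the maps $\sigma^l = -\sum_i\sigma^0\xcirc d^{l-i}\xcirc\sigma^i$, and by Theorem~\ref{Formula d^1} every $d^l$ with $l\ge 2$ lands in a submodule generated by tensors carrying factors in $f\langle h_1,\dots,h_s\rangle$ and $\wt f\langle h_1,\dots,h_s\rangle$. Thus working modulo $\wh J_n$ annihilates precisely the contributions of $d^{\ge 2}$ and of the $\gamma$-$\gamma$ collisions, leaving only the terms coming from $\sigma^0$ and $d^1$, which are explicit.

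First I would prove item~(1), which anchors the induction. Here the input $[a_0\gamma(h_0)\ot\gamma(\bh_{1i})\ot\ba_{i+1,n}]$ already has its group-like block grouped at the front, so no $\gamma$-$\gamma$ collision occurs inside the $A$-block. Running the recursion and inserting the explicit formulas for $\sigma^0$ given before Theorem~\ref{res nuestra} — which absorb the last $A$-factor into the $\ov A$-string and peel off the trailing $\gamma$ — reproduces the tensor $[a_0\gamma(h_0)\ot_A\ov{\gamma(\bh_{1i})}\ot\ba_{i+1,n}]$ verbatim, with the tensor symbols in the group-like block now read over $A$. This is an exact identity, as claimed, and it is the clean form against which the remaining items measure their inputs.

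Next come the two vanishing statements~(2) and~(6). For~(2), the $A$-element sitting among the first $i$ group-like slots cannot be absorbed by $\sigma^0$ into the $\ov A$-string without first being pushed through an $(E/A)$-factor, where it dies by normalization; since $\bx_{0n}$ carries no factor outside $A\cup\mathcal{H}$ (it lies in $\wh V_n$), no $f$-tail is generated to survive, and the whole image is $0$. Statement~(6) is the same phenomenon one degree higher, where the single factor outside $A\cup\mathcal{H}$ allowed by $\wh V'_n$ must be tracked through $\sigma^0$ alongside the $A$-degeneracy; the care here is to check that this bad factor and the $\mathcal{H}$-factor cannot conspire to produce a non-$f$ survivor. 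Items~(3)--(5) are the genuine computations, each isolating a single factor placed out of position relative to~(1): I would feed the tensor into the recursion, apply the differential, and then $\wh\psi_{n-1}$ via the inductive hypothesis together with~(1). The product rule, together with the cocycle and twisted-module conditions~(ii) and~(iii), converts the adjacent products $a_i\gamma(h_i)$, $a_jh_j$, or $a_j\gamma(h_j)$ into the stated leading terms; in particular the $d^1$ of Theorem~\ref{Formula d^1} produces the second summand in~(3) and~(5), in which $\gamma(h_j^{(2)})$ is moved to the front and the tail is acted on by $h_j^{(1)}$, with~(iii) governing the commutation of $\gamma(h_j)$ past the intervening $A$-string. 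Every remaining term either carries a surviving $f$-factor, hence lies in the relevant $\wh J_n(\dots)$, or drops into the strictly lower layer $\bigoplus_{l\le i-2}\wh X_{n-l,l}$ quotiented out in the statements.

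I expect the main obstacle to be the combinatorial bookkeeping in items~(3)--(5): the homotopy $\ov\sigma$ is a nested double sum of terms $\sigma^0\xcirc d^{l-i}\xcirc\sigma^i$, and one must verify for each summand that its contribution either reproduces one of the two named leading terms or is absorbed modulo $\wh J_n$ and lower filtration. Keeping the signs consistent — the $(-1)^{r+s+1}$ in $\sigma^0$ and the alternating signs in $d^1$ and in $f(h,l)\ov{*}\ba_{1r}$ — and correctly distinguishing $\ot_k$, $\ot_A$ and $\ot=\ot_K$ throughout is where the real labor lies; the conceptual content is entirely contained in the product rule and in the $f$-degree estimate of Theorem~\ref{Formula d^1}.
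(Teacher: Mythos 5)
Your overall strategy is the same as the paper's: the paper proves the analogous statement at the level of the bimodule complexes (Proposition~\ref{propA.7}, for $\psi\colon \B_*(E)\to X_*$) by induction on $n$ through the recursive definition of $\psi$, with the $f$-degree estimates of Theorem~\ref{Formula d^1} controlling error terms, and then Proposition~\ref{prop2.5} follows by applying $E\ot_{E^e}-$. However, there is a concrete gap in how you propose to run the induction. Writing $\psi_n(\bx_{0n}\ot 1) = \ov{\sigma}_n\xcirc\psi_{n-1}\xcirc b'_n(\bx_{0n}\ot 1)$, the bar differential produces, besides the term $(-1)^n\bx_{0,n-1}\ot x_n$ which yields the asserted leading terms, all the inner terms $\cdots\ot x_jx_{j+1}\ot\cdots\ot 1$. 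Your guiding dichotomy --- every error term either acquires a factor in $f\langle\dots\rangle$, hence lies in $\wh{J}_n$, or drops to a strictly lower filtration layer --- fails for these: merging two adjacent factors of $A$ (or $a_0\gamma(h_0)$ with a following factor of $A$) creates no $f$-factor and does not lower the filtration, and $\psi_{n-1}$ of such a term is nonzero in general (by item~(1) it is the corresponding element of the small complex). These contributions vanish for a reason your sketch never identifies: each inner term ends in $\ot 1$, so by the recursion $\psi_{n-1}$ sends it into $\ima(\ov{\sigma}_{n-1})$, and the paper first proves $\ov{\sigma}\xcirc\ov{\sigma}=0$ (Lemmas~\ref{leA.1} and~\ref{leA.2}), whence the key simplification of Remark~\ref{reA.3}: $\psi_n(\bx_{0n}\ot 1) = (-1)^n\ov{\sigma}\xcirc\psi_{n-1}(\bx_{0n})$. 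This identity, which collapses the recursion to a single term, is the engine of the whole induction; without it (or an equivalent observation) your inductive step does not close.

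A second, smaller point: your vanishing arguments (items~(2) and~(6), ``dies by normalization'') account only for the component $\sigma^0$ of $\ov{\sigma}$. The contracting homotopy also contains the corrections $\sigma^l$ ($l\ge 1$) and the terms involving $\sigma^{-1}\xcirc\mu$, and the paper must dispose of these separately: Lemma~\ref{leA.1} shows only the $l=0$ correction $-\sigma^0\xcirc\sigma^{-1}\xcirc\mu$ survives, and Lemma~\ref{leA.6} evaluates $\sigma^l$ and $\ov{\sigma}$ case by case (its items~(3) and~(7) give the vanishing statements, its items~(2), (4) and~(5) the $\wh{J}_n$-estimates). This is exactly the ``combinatorial bookkeeping'' you defer, but it does not follow from your stated principles; note also that the second summands in items~(3) and~(5) do not come from the $d^1$ of Theorem~\ref{Formula d^1} (the differential entering the recursion is $b'$, not $d$) but from $\sigma^0$ splitting a last factor $\gamma(h)a = a^{h^{(1)}}\gamma(h^{(2)})$, the wrap-around of $\gamma(h^{(2)})$ to the front being an artifact of the identification $\wh{X}_{rs}\simeq E\ot_{E^e}X_{rs}$.
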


\begin{proof} See Appendix~A.
\end{proof}

Let $\wh{\eta}_n\colon \wh{X}_n\to \wh{X}_{n+1}$, $\wh{t}_{H,n}\colon \wh{X}_n\to \wh{X}_n$ and
$\wh{t}_{A,n}\colon \wh{X}_{n+1}\to \wh{X}_{n+1}$ be the $k$-linear maps defined by
\begin{align*}
& \wh{\eta}\bigl(\bigl[a_0\gamma(h_0)\ot_A\ov{\gamma(\bh_{1i})}\ot \ba_{1,n-i}\bigr]\bigr) =
\bigl[\ov{\gamma(\bh_{0i})} \ot \ba_{1,n-i}\ot a_0\bigr],\\
& \wh{t}_H\bigl(\bigl[a_0\gamma(h_0)\ot_A \ov{\gamma(\bh_{1i})}\ot \ba_{1,n-i}\bigr]\bigr) =
\bigl[\gamma(h_i^{(2)})\ot_A a_0\gamma(h_0) \ot_A \ov{\gamma(\bh_{1,i-1})}\ot
\ba_{1,n-i}^{h_i^{(1)}}\bigr]
\intertext{and}
& \wh{t}_A\bigl(\bigl[a_0\gamma(h_0)\ot_A \ov{\gamma(\bh_{1i})} \ot \ba_{1,n-i+1}\bigr]\bigr) =
\bigl[\ov{\gamma(\bh_{0i}^{(2)})}\ot \ba_{2,n-i+1}\ot a_0a_1^{\bh_{0i}^{(1)}}\bigr],
\end{align*}
respectively

\begin{proposition}\label{prop2.6} The Connes operator $\wh{D}$ satisfies:

\begin{enumerate}

\smallskip

\item If $\bx = \bigl[a_0\ot_A \ov{\gamma(\bh_{1i})}\ot \ba_{1,n-i}\bigr]$, then
$$
\wh{D}(\bx) = \sum_{j=0}^{n-i} (-1)^{j(n-i)+n} \wh{t}_A^j\xcirc \eta(\bx),
$$
module $F^{i-1}(\wh{X}_{n+1})\cap H\wh{J}_{n+1}(h_1,\dots,h_i)$.

\smallskip

\item If $\bx = \bigl[a_0\gamma(h_0)\ot_A\ov{\gamma(\bh_{1i})}\ot \ba_{1,n-i}\bigr]$ with
$a_0\gamma(h_0) \notin A$, then
\begin{align*}
\wh{D}(\bx) & = \sum_{j=0}^i (-1)^{ji} 1\ot_A \wh{t}^j_H(\bx) + \sum_{j=0}^{n-i}
(-1)^{j(n-i)+n} \wh{t}_A^j\xcirc \eta(\bx)
\end{align*}
module $F^i(\wh{X}_{n+1})\cap H\wh{J}_{n+1}(h_1,\dots,h_i)$.
\end{enumerate}

\end{proposition}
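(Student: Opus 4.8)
The plan is to expand the definition $\wh{D} = \wh{\psi}\xcirc B\xcirc\wh{\phi}$ and to evaluate the three factors successively, reading $\wh{\phi}$ off from Proposition~\ref{prop2.1}, the operator $B$ from its explicit cyclic formula, and $\wh{\psi}$ from the case list in Proposition~\ref{prop2.5}. Writing $\bx = \bigl[a_0\gamma(h_0)\ot_A\ov{\gamma(\bh_{1i})}\ot\ba_{1,n-i}\bigr]$ (with $a_0\gamma(h_0)=a_0\in A$ in case~(1)), Proposition~\ref{prop2.1} presents $\wh{\phi}(\bx)$ as a principal shuffle term $\bigl[a_0\gamma(h_0)\ot\gamma(\bh_{1i})*\ba_{1,n-i}\bigr]$ plus a correction lying in $F^{i-1}(E\ot\ov{E}^{\ot^n}\ot)\cap\wh{V}_n\cap\wh{C}_n(h_1,\dots,h_i)$. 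Since the shuffle interleaves elements of $\mathcal{H}$ with elements of $A$, the principal term also lies in $\wh{V}_n$, so after applying $B$ the whole of $B\xcirc\wh{\phi}(\bx)$ lies in $\wh{V}'_{n+1}$; this is exactly the hypothesis needed to invoke parts~(2) and~(6) of Proposition~\ref{prop2.5}.

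I would first dispose of the correction term. Rotation by $B$ does not destroy the presence of an entry in $\wt{f}\langle h_1,\dots,h_i\rangle$, and $\wh{\psi}$ carries such tensors into $H\wh{J}_{n+1}(h_1,\dots,h_i)$, because $\wt{f}\langle h_1,\dots,h_i\rangle$ is a $K$-subbimodule of $A$ stable under the weak action, so the contractions defining $\wh{\psi}$ keep these entries inside $\wt{f}\langle h_1,\dots,h_i\rangle$. For the filtration bookkeeping I use that $\wh{\psi}$ preserves filtrations, so it suffices to control $B$: when the head $a_0\gamma(h_0)$ lies in $A$ (case~(1)) the rotations move an $A$-element into the body and $B$ preserves the filtration, landing the correction in $F^{i-1}\cap H\wh{J}_{n+1}$; when $a_0\gamma(h_0)\notin A$ (case~(2)) the rotations raise the filtration by one, landing the correction in $F^i\cap H\wh{J}_{n+1}$. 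This is precisely the mechanism producing the two different error modules in the statement.

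The main computation is then $\wh{\psi}$ applied to $B$ of the principal shuffle term. By parts~(2) and~(6) of Proposition~\ref{prop2.5}, $\wh{\psi}$ annihilates every rotated summand in which an element of $A$, or a second element of $\mathcal{H}$, occupies one of the first $i$ slots; hence only the cyclic rotations displaying $i$ genuine elements of $\mathcal{H}$ in the first $i$ positions survive. These fall into two families. The rotations cutting inside the $A$-run, by parts~(1) and~(5) of Proposition~\ref{prop2.5}, are identified with $\wh{t}_A^{\,j}\xcirc\wh{\eta}(\bx)$ for $0\le j\le n-i$; and, only when $a_0\gamma(h_0)\notin A$ so that the promoted head supplies a genuine extra $\mathcal{H}$-slot, the rotations cutting inside the $\mathcal{H}$-run, by parts~(3) and~(4), are identified with $1\ot_A\wh{t}_H^{\,j}(\bx)$ for $0\le j\le i$. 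Matching the explicit formulas of Proposition~\ref{prop2.5}(3)--(5) against the defining formulas of $\wh{\eta}$, $\wh{t}_A$ and $\wh{t}_H$ yields the two displayed sums, the Sweedler legs of $\wh{t}_H$ and $\wh{t}_A$ being controlled by the twisted-module and cocycle identities.

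It remains to collect signs: the coefficient $(-1)^{mn}$ of the $m$-th summand of $B$ combines with the reindexing inside each run to give $(-1)^{j(n-i)+n}$ for the $A$-rotations and $(-1)^{ji}$ for the $\mathcal{H}$-rotations. I expect the main obstacle to be exactly this bookkeeping-heavy last step: enumerating which (shuffle, rotation) pairs survive $\wh{\psi}$, tracking the coproduct legs and twisted actions through parts~(3)--(5) of Proposition~\ref{prop2.5}, and confirming that every discarded summand genuinely lands in $F^{i-1}\cap H\wh{J}_{n+1}$, respectively $F^i\cap H\wh{J}_{n+1}$. The conceptual content---that over the simplified complex $B$ decomposes as an $A$-rotation part plus an $\mathcal{H}$-rotation part---is transparent; the real work lies in the term-by-term matching and the sign verification.
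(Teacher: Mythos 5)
Your proposal follows exactly the route the paper itself takes: its proof of Proposition~\ref{prop2.6} consists precisely of expanding $\wh{D}=\wh{\psi}\xcirc B\xcirc\wh{\phi}$, reading $\wh{\phi}$ from Proposition~\ref{prop2.1}, applying the cyclic formula for $B$, and sorting the rotated terms through the case list of Proposition~\ref{prop2.5}, with the details left to the reader. Your identification of the surviving shuffle--rotation pairs with the families $\wh{t}_A^j\xcirc\wh{\eta}(\bx)$ and $1\ot_A\wh{t}_H^j(\bx)$, together with the filtration accounting that routes the correction terms into $F^{i-1}\cap H\wh{J}_{n+1}$ (respectively $F^i\cap H\wh{J}_{n+1}$), supplies exactly those omitted details.
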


\begin{proof} It is a direct consequence of the definition of $B$, Propositions~\ref{prop2.1}
and~\ref{prop2.5}. We leave the details to the reader.
\end{proof}

\section{The cyclic homology of a crossed product with invertible cocycle}
Let $E=A\#_f H$. Assume that the cocycle $f$ is invertible. Then, the map $\gamma$ is
convolution invertible and its inverse is given by $\gamma^{-1}(h) = f^{-1}(S(h^{(2)}),h^{(3)})
\# S(h^{(1)})$. In \cite{G-G} it was proven that under this hypothesis the complex
$(\wh{X}_*,\wh{d}_*)$ of Section~2 is isomorphic to a simpler complex $(\ov{X}_*,\ov{d}_*)$. In
this section we obtain a similar result for the mixed complex~$\bigl(\wh{X},\wh{d},
\wh{D}\bigr)$.

\smallskip

For each $r,s\ge 0$, let
$$
\ov{X}_{rs} = \Bigl(E\ot \ov{A}^{\ot^r}\ot\Bigr)\ot_k \ov{H}^{\ot_k^s}.
$$
The map $\theta_{rs}\colon\wh{X}_{rs}\to \ov{X}_{rs}$, defined by
$$
\theta_{rs}(\bx) = (-1)^{rs} \bigl[a_0\gamma(h_0) a_1\gamma(h_1^{(1)})\cdots a_s
\gamma(h_s^{(1)}) \ot \ba_{s+1,s+r}\bigr] \ot_k \bh_{1s}^{(2)},
$$
where $\bx = \bigl[a_0\gamma(h_0)\ot_A \cdots\ot_A a_s\gamma(h_s)\ot \ba_{s+1,s+r}\bigr]$, is
an isomorphism. The inverse map of $\theta_{rs}$ is the map given by
$$
\bigl[a_0\gamma(h_0)\ot \ba_{1r}\bigr]\ot_k \bh_{1s} \mapsto (-1)^{rs}
\bigl[a_0\gamma(h_0)\gamma^{-1} (h_s^{(1)}) \cdots \gamma^{-1}(h_1^{(1)}) \ot_A
\ov{\gamma(\bh_{1s}^{(2)})}\ot \ba_{1r}\bigr].
$$
Let $\ov{d}^l_{rs}\colon\ov{X}_{rs}\to \ov{X}_{r+l-1,s-l}$ be the map $\ov{d}^l_{rs} :=
\theta_{r+l-1,s-l}\circ\wh{d}^l_{rs}\circ \theta_{rs}^{-1}$. In the absolute case the following
result was obtained in \cite{G-G}. The generalization to the relative context is direct.

\begin{theorem}\label{th3.1} The Hochschild homology of the $K$-algebra $E$ is the homology of
$(\ov{X}_*,\ov{d}_*)$, where
$$
\ov{X}_n = \bigoplus_{r+s = n} \ov{X}_{rs}\qquad\text{and}\qquad \ov{d}_n := \sum^n_{l=1}
\ov{d}^l_{0n} + \sum_{r=1}^n \sum^{n-r}_{l=0} \ov{d}^l_{r,n-r}.
$$
Moreover $\ov{d}^0_{rs}$ is the boundary map of the normalized chain Hochschild complex of the
$K$-algebra $A$, with coefficients in $E$, tensored on the right over $k$ with
$\ide_{\ov{H}^{\ot^s}}$,
\begin{align*}
\ov{d}^1_{rs}(\bx)& =  (-1)^{r+s}\bigl[\gamma(h_s^{(3)})a_0\gamma(h_0)\gamma^{-1}(h_s^{(1)})\ot
\ba_{1r}^{h_s^{(2)}}\bigr] \ot_k \bh_{1,s-1} \\
& + \sum_{i=1}^{s-1}(-1)^{r+i} \bigl[a_0\gamma(h_0)\ot \ba_{1r}\ot \bh_{1,i-1}\ot h_ih_{i+1}
\bigr] \ot_k
\bh_{i+2,s}\\
& + (-1)^r \bigl[a_0\gamma(h_0)\ep(h_1)\ot \ba_{1r}\bigr] \ot_k \bh_{2s}
\intertext{and}
\ov{d}^2_{rs}(\bx) & = \sum_{i=0}^r (-1)^{i-1} \Bigl[\gamma(h_{s-1}^{(5)}h_s^{(5)})
a_0\gamma(h_0) \gamma^{-1}(h_s^{(1)}) \gamma^{-1}(h_{s-1}^{(1)})\\
&\,\ot (\ba_{1i}^{h_s^{(2)}})^{h_{s-1}^{(2)}}\ot f(h_{s-1}^{(3)},h_s^{(3)})\ot
\ba_{i+1,r}^{h_{s-1}^{(4)}h_s^{(4)}}\Bigr]\ot_k \bh_{1,s-2},
\end{align*}
where $\bx= [a_0\gamma(h_0)\ot \ba_{1r}\ot \bh_{1s}]$.
\end{theorem}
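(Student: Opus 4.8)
The plan is to reduce the whole statement to the single explicit isomorphism $\theta_* = \bigoplus_{r+s=*}\theta_{rs}$ followed by a (lengthy but formal) bookkeeping computation. First I would check that $\theta_{rs}$ is indeed an isomorphism with the inverse displayed above. The only nontrivial input is the convolution invertibility of $\gamma$, i.e. $\gamma(h^{(1)})\gamma^{-1}(h^{(2)}) = \gamma^{-1}(h^{(1)})\gamma(h^{(2)}) = \ep(h)1_E$, and the verification is a telescoping argument: composing $\theta^{-1}$ with $\theta$ produces the string $\gamma^{-1}(h_s^{(1)})\cdots\gamma^{-1}(h_1^{(1)})\gamma(h_1^{(2)})\cdots\gamma(h_s^{(2)})$ with the labels $h_i^{(3)}$ recorded in the $\ov{H}^{\ot_k^s}$ factor, and the identity $\sum\gamma^{-1}(h^{(1)})\gamma(h^{(2)})\ot_k h^{(3)} = 1_E\ot_k h$ collapses the innermost pair, then the next, and so on down to $1_E\ot_k\bh_{1s}$, while the two signs $(-1)^{rs}$ cancel. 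The reverse composite is entirely analogous.

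Once $\theta_*$ is known to be an isomorphism of graded modules, the first assertion is immediate: by the very definition $\ov{d}^l_{rs} = \theta_{r+l-1,s-l}\xcirc\wh{d}^l_{rs}\xcirc\theta_{rs}^{-1}$, whence $\ov{d} = \theta\xcirc\wh{d}\xcirc\theta^{-1}$, so $\theta_*$ is tautologically a chain isomorphism $(\wh{X}_*,\wh{d}_*)\to(\ov{X}_*,\ov{d}_*)$ (in particular $\ov{d}\xcirc\ov{d} = 0$). Since $(\wh{X}_*,\wh{d}_*)$ computes the Hochschild homology of the $K$-algebra $E$ (this is the content of Section~2; see Theorem~\ref{th2.4}), so does $(\ov{X}_*,\ov{d}_*)$.

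It remains to obtain the explicit boundary formulas by conjugating the formulas of Section~2 through $\theta$. For $\ov{d}^0$ this is painless: $\wh{d}^0_{rs}$ is $(-1)^s$ times the normalized Hochschild boundary of $A$ with coefficients in $E\ot_A(E/A)^{\ot_{\!A}^s}$; the factor $(-1)^s$ is absorbed by the signs $(-1)^{rs}$ and $(-1)^{(r-1)s}$ of $\theta_{rs}^{-1}$ and $\theta_{r-1,s}$, and $\theta$ identifies the coefficient bimodule $E\ot_A(E/A)^{\ot_{\!A}^s}$ with $E$ once $\bh_{1s}$ has been recorded in $\ov{H}^{\ot_k^s}$, yielding the Hochschild boundary of $A$ with coefficients in $E$ tensored on the right with $\ide_{\ov{H}^{\ot^s}}$. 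For $\ov{d}^1$ and $\ov{d}^2$ I would apply $\theta_{rs}^{-1}$, then the displayed formulas for $\wh{d}^1$ and $\wh{d}^2$, then $\theta$, repeatedly using $\gamma(h)\gamma(l) = f(h^{(1)},l^{(1)})\gamma(h^{(2)}l^{(2)})$ (a direct consequence of the multiplication rule of $E$), the twisted module condition (iii) and the cocycle condition (ii) to push the weak action and $f$ past the $\gamma$'s, together with the same telescoping of $\gamma^{-1}\gamma$ pairs as above.

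The main obstacle will be the $\ov{d}^2$ computation (and, to a lesser extent, the interior terms of $\ov{d}^1$). There one must track how the factors $\gamma^{-1}(h_s^{(1)})\gamma^{-1}(h_{s-1}^{(1)})$ inherited from $\theta^{-1}$ combine with the $\gamma(h_{s-1}^{(2)}h_s^{(2)})$ produced by $\wh{d}^2$ to assemble the front term $\gamma(h_{s-1}^{(5)}h_s^{(5)})a_0\gamma(h_0)\gamma^{-1}(h_s^{(1)})\gamma^{-1}(h_{s-1}^{(1)})$, how the operation $f(h,l)\ov{*}\ba_{1r}$ unfolds into the sum over $i$ carrying the iterated action $(\ba_{1i}^{h_s^{(2)}})^{h_{s-1}^{(2)}}$ and the tail $\ba_{i+1,r}^{h_{s-1}^{(4)}h_s^{(4)}}$, and, above all, how to keep every Sweedler index and every sign consistent. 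For $\ov{d}^1$ the subtlety is that the interior terms $\gamma(h_i)\gamma(h_{i+1}) = f(h_i^{(1)},h_{i+1}^{(1)})\gamma(h_i^{(2)}h_{i+1}^{(2)})$ apparently introduce a cocycle factor that must be reabsorbed by the neighbouring $\gamma^{-1}$'s, so that only the product $h_ih_{i+1}$ survives in the $\ov{H}$ slot. In the absolute case $K = k$ this entire computation is carried out in \cite{G-G}; since $\theta$, its inverse and all the maps $\wh{d}^l$ are given by the same formulas in the relative setting, and replacing $\ot_k$ by $\ot_K = \ot$ in the $A$-directions alters none of the identities used (the subalgebra $K$ being stable under the action), the argument transfers verbatim, which is precisely what is meant by the generalization being direct.
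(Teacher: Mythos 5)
Your proposal is correct and follows essentially the same route as the paper, which offers no proof beyond defining $\ov{d}^l_{rs} := \theta_{r+l-1,s-l}\xcirc\wh{d}^l_{rs}\xcirc\theta_{rs}^{-1}$, citing \cite{G-G} for the absolute case, and asserting that the relative generalization is direct. In fact you supply more detail than the paper does (the telescoping verification that $\theta$ is an isomorphism via $\gamma^{-1}(h^{(1)})\gamma(h^{(2)})\ot_k h^{(3)} = 1_E\ot_k h$, the sign cancellation $(-1)^{(r-1)s+s+rs}=1$ for $\ov{d}^0$, and the plan for conjugating $\wh{d}^1$, $\wh{d}^2$), and your reading of ``the generalization is direct'' matches the paper's intent.
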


For each $n\ge 0$, let $\ov{D}_n = \theta_n\xcirc \wh{D}_n\xcirc \theta_n^{-1}$.

\begin{theorem}\label{th3.2} $\bigl(\ov{X},\ov{d},\ov{D}\bigr)$ is a mixed complex giving the
Hochschild, cyclic, negative and periodic homology of $E$. More precisely, the mixed complexes
$\bigl(\ov{X},\ov{d},\ov{D}\bigr)$ and $\bigl(E\ot \ov{E}^{\ot^*},b,B\bigr)$ are homotopically
equivalent.
\end{theorem}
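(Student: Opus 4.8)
The plan is to observe that $\theta = (\theta_n)_{n\ge 0}$ is not merely a graded isomorphism but in fact an isomorphism of mixed complexes from $\bigl(\wh{X},\wh{d},\wh{D}\bigr)$ to $\bigl(\ov{X},\ov{d},\ov{D}\bigr)$, and then to transport the homotopy equivalence of Theorem~\ref{th2.4} across it. First I would recall that each $\theta_{rs}$ is an isomorphism with the explicit inverse written down before Theorem~\ref{th3.1}, so that $\theta_n = \bigoplus_{r+s=n}\theta_{rs}$ is an isomorphism of $k$-modules in every degree.

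Next I would check that $\theta$ intertwines both differentials and both Connes-type operators. For the Hochschild differentials this is immediate from the definition $\ov{d}^l_{rs} := \theta_{r+l-1,s-l}\circ\wh{d}^l_{rs}\circ\theta_{rs}^{-1}$: since $\theta_n$ restricts to $\theta_{rs}$ on $\wh{X}_{rs}$ and $\wh{d}^l_{rs}$ lands in $\wh{X}_{r+l-1,s-l}$, a componentwise computation gives $\ov{d}_n\circ\theta_n = \theta_n\circ\wh{d}_n$ for every $n$. For $\wh{D}$ and $\ov{D}$ the identity $\ov{D}_n\circ\theta_n = \theta_n\circ\wh{D}_n$ holds by the very definition $\ov{D}_n := \theta_n\circ\wh{D}_n\circ\theta_n^{-1}$. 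Consequently the defining relations of a mixed complex are transported through the invertible map $\theta$: one has $\ov{d}\circ\ov{d} = \theta\wh{d}\wh{d}\theta^{-1} = 0$, $\ov{D}\circ\ov{D} = \theta\wh{D}\wh{D}\theta^{-1} = 0$ and $\ov{d}\ov{D}+\ov{D}\ov{d} = \theta(\wh{d}\wh{D}+\wh{D}\wh{d})\theta^{-1} = 0$, so that $\bigl(\ov{X},\ov{d},\ov{D}\bigr)$ is indeed a mixed complex and $\theta$ is an isomorphism of mixed complexes.

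Since an isomorphism of mixed complexes induces, functorially, an isomorphism of the associated double complexes $\BP$, $\BN$ and $\BC$, it yields an isomorphism—hence in particular a homotopy equivalence—on the corresponding total complexes and on all four homology theories. Composing with the homotopy equivalence of Theorem~\ref{th2.4}, I would set $\ov{\Phi} := \wh{\Phi}\circ\BP(\theta^{-1})$ and $\ov{\Psi} := \BP(\theta)\circ\wh{\Psi}$, keeping the homotopy $\wh{\Omega}$ unchanged. Then $\ov{\Psi}\circ\ov{\Phi} = \BP(\theta)\circ(\wh{\Psi}\circ\wh{\Phi})\circ\BP(\theta^{-1}) = \ide$, while $\ov{\Phi}\circ\ov{\Psi} = \wh{\Phi}\circ\wh{\Psi}$ remains homotopic to the identity via $\wh{\Omega}$; the same computation goes through verbatim with $\BP$ replaced by $\BC$ or $\BN$. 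This exhibits $\bigl(\ov{X},\ov{d},\ov{D}\bigr)$ as homotopically equivalent to $\bigl(E\ot\ov{E}^{\ot^*}\ot,b,B\bigr)$ and identifies its Hochschild, cyclic, negative and periodic homologies with those of the $K$-algebra $E$.

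The argument carries essentially no genuine obstacle, precisely because $\ov{d}$ and $\ov{D}$ were defined by conjugating $\wh{d}$ and $\wh{D}$ with $\theta$. The only point that requires attention is the componentwise bookkeeping confirming that $\theta$ is a chain map in the strong sense $\ov{d}_n\circ\theta_n = \theta_n\circ\wh{d}_n$, rather than a mere degreewise isomorphism; after that, everything is formal transport of structure through an invertible map. The substantive content—that such a homotopy equivalence exists at all—was already supplied by Theorem~\ref{th2.4} through the perturbation lemma.
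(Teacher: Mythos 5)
Your proposal is correct and follows exactly the paper's own argument: the paper's proof likewise observes that $\theta\colon \bigl(\wh{X},\wh{d},\wh{D}\bigr)\to \bigl(\ov{X},\ov{d},\ov{D}\bigr)$ is an isomorphism of mixed complexes (immediate from the definitions $\ov{d}^l_{rs} = \theta\circ\wh{d}^l_{rs}\circ\theta^{-1}$ and $\ov{D} = \theta\circ\wh{D}\circ\theta^{-1}$) and then invokes Theorem~\ref{th2.4}. You merely spell out the transport-of-structure details that the paper leaves implicit.
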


\begin{proof} Clearly $\bigl(\ov{X},\ov{d}, \ov{D}\bigr)$ is a mixed complex and $\theta\colon
\bigl(\wh{X},\wh{d},\wh{D}\bigr)\to \bigl(\ov{X},\ov{d}, \ov{D}\bigr)$ is an isomorphism of
mixed complexes.  So the result follows from Theorem~\ref{th2.4}.
\end{proof}

We now are going to obtain a formula for~$\ov{D}$. To do this we need to introduce a map
$T\colon H^{\ot_k^{i+1}}\to A$ such that
$$
\gamma(h_0)\hspace{-0.5pt} \gamma^{-1}(h_i)\hspace{-0.5pt}\cdots\hspace{-0.5pt}
\gamma^{-1}(h_1) \hspace{-0.5pt} = \hspace{-0.5pt} T\bigl(h_0^{(1)}
\hspace{-0.5pt},\hspace{-0.5pt} S(h_1)^{(1)}\hspace{-0.5pt},\hspace{-0.5pt}\dots
\hspace{-0.5pt}, \hspace{-0.5pt} S(h_i)^{(1)}\bigr) \hspace{-0.5pt}\gamma
\bigl(h_0^{(2)}S(h_i)^{(2)} \hspace{-0.5pt} \cdots \hspace{-0.5pt}
S(h_1)^{(2)}\bigr)\hspace{-0.5pt}.
$$
To abbreviate notations we set
$$
\zeta = \gamma^{-1}\xcirc S^{-1}\quad\text{and}\quad U\bigl(\bh_{0i}) = T(h_0,S(h_1),
\dots,S(h_i)\bigr).
$$
Since
$$
\gamma(h_0)\gamma^{-1}(h_i)\cdots \gamma^{-1}(h_1) = \gamma(h_0)\zeta\bigl(S(h_i)\bigr)
\cdots \zeta\bigl(S(h_1)\bigr),
$$
we can solve
\begin{align*}
U(\bh_{0i}) &= \gamma(h_0^{(1)})\zeta\bigl(S(h_i)^{(1)}\bigr) \cdots\zeta
\bigl(S(h_1)^{(1)}\bigr)\gamma^{-1}\bigl(h_0^{(2)}S(h_1\cdots h_i)^{(2)}\bigr)\\
& = \gamma(h_0^{(1)})\zeta\bigl(S(h_i^{(2)})\bigr) \cdots\zeta \bigl(S(h_1^{(2)})\bigr)
\gamma^{-1} \bigl(h_0^{(2)}S(h_1^{(1)}\cdots h_i^{(1)})\bigr)\\
& = \gamma(h_0^{(1)})\gamma^{-1}(h_i^{(2)}) \cdots\gamma^{-1}(h_1^{(2)}) \gamma^{-1}
\bigl(h_0^{(2)}S(h_1^{(1)}\cdots h_i^{(1)})\bigr).
\end{align*}
We now must check that $T\bigl(h_0,S(h_1),\dots,S(h_i)\bigr)\in A$. For this it suffices to see
that this element is coinvariant under the coaction $\nu = \ide\ot \Delta$ of $A\#_f H$, which
follows easily from the fact that $\nu\bigl(\gamma^{-1}(h)\bigr) =
\gamma^{-1}\bigl(h^{(2)}\bigr)\ot S\bigl(h^{(1)}\bigr)$ and $A\#_f H$ is a comodule algebra.
Note that
$$
a_0\gamma(h_0)\gamma^{-1}(h_i)\cdots\gamma^{-1}(h_1) = a_0U\bigl(h_0^{(1)},\bh_{1i}^{(2)}
\bigr)\gamma\bigl(h_0^{(2)}S(h_1^{(1)}\cdots h_i^{(1)})\bigr).
$$

\medskip

For each $0\le i\le n$, let $F^i(\ov{X}_n) = \bigoplus_{0\le s\le i} \ov{X}_{n-s,s}$. The
complex $(\ov{X}_*,\ov{d}_*)$ is filtered by $F^0(\ov{X}_*)\subseteq F^1(\ov{X}_*) \subseteq
F^2(\ov{X}_*)\subseteq\dots$.

\smallskip

Given $h_1,\dots,h_i\in H$, we let $H\ov{J}_n(h_1,\dots,h_i)$ denote the $k$-submodule of
$\ov{X}_n$ generated by all the elements $[a_0\gamma(h_0)\ot \ba_{1r}]\ot_k \bh_{1s}$, with
$r>0$ and some $a_j\in \wt{f}(h_1,\dots,h_i)$ (for the definition of this last expression see
the discussion above Proposition~\ref{Formula d^1}).

\smallskip

Let $\ov{\eta}_n\colon \ov{X}_n\to \ov{X}_{n+1}$ and $\ov{t}_{H,n}\colon \ov{X}_{n+1}\to
\ov{X}_{n+1}$ be the $k$-linear maps defined by
\begin{align*}
\,\,\quad& \ov{\eta}(\bx) = \bigl[a_0\gamma(h_0^{(1)}) \ot\ba_{1,n-i}\bigr]\ot_k
h_0^{(2)}S(h_1^{(1)}\cdots h_i^{(1)}) \ot_k \bh_{1i}^{(2)}
\intertext{and}
& \ov{t}_H(\byy) = \bigl[\gamma(h_{i+1}^{(3)}) a_0\gamma(h_0)\gamma^{-1}(h_{i+1}^{(1)})
\ot\ba_{1,n-i}^{h_{i+1}^{(2)}}\bigr]\ot_k h_{i+1}^{(4)} \ot_k \bh_{1i},
\end{align*}
where
$$
\bx = \bigl[a_0\gamma(h_0)\ot \ba_{1,n-i}\bigr]\ot_k \bh_{1i}\quad\text{and}\quad \byy =
\bigl[a_0\gamma(h_0)\ot\ba_{1,n-i}\bigr]\ot_k \bh_{1,i+1},
$$
respectively.

\begin{theorem}\label{th3.3} If $\bx = \bigl[a_0\gamma(h_0)\ot\ba_{1,n-i}\bigr]
\ot_k\bh_{1i}$, then
\begin{align*}
\ov{D}(\bx) & = \sum_{j=0}^i (-1)^{ji+n-i} \ov{t}_H^j\xcirc\eta(\bx)\\
& + \sum_{j=0}^{n-i}(-1)^{(j+1)(n-i)}\Bigl[\gamma\bigl(h_0^{(3)}S(h_1^{(1)}\cdots
h_i^{(1)})\bigr)\gamma(h_1^{(5)})\cdots\gamma(h_i^{(5)})\\
& \,\ot \ba_{j+1,n-i} \ot a_0U(h_0^{(1)},\bh_{1i}^{(3)}) \ot \bigl(\ba_{1j}^{\bh_{1i}^{(4)}}
\bigr)^{ h_0^{(2)}S(h_1^{(2)} \cdots h_i^{(2)})}\Bigr]\ot_k \bh_{1i}^{(6)},
\end{align*}
module $F^i(\ov{X}_{n+1})\cap H\ov{J}_{n+1}(h_1,\dots,h_i)$.
\end{theorem}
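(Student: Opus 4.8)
The plan is to obtain the formula by transporting Proposition~\ref{prop2.6} through the isomorphism $\theta$. Since $\ov{D}_n = \theta_n\circ\wh{D}_n\circ\theta_n^{-1}$ by definition, it suffices to apply Proposition~\ref{prop2.6} to $\theta^{-1}(\bx)$ and then push the resulting expression back through $\theta$. For $\bx = \bigl[a_0\gamma(h_0)\ot\ba_{1,n-i}\bigr]\ot_k\bh_{1i}$ the leading factor of $\theta_{n-i,i}^{-1}(\bx)$, namely $a_0\gamma(h_0)\gamma^{-1}(h_i^{(1)})\cdots\gamma^{-1}(h_1^{(1)})$, is generically outside $A$, so I would invoke part~(2) of Proposition~\ref{prop2.6} (the degenerate case, where the $H$-slots collapse into $k$, being consistent with both parts modulo the error space). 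Thus everything reduces to conjugating by $\theta$ the two sums of Proposition~\ref{prop2.6}(2): the ``Hopf'' sum $\sum_{j}(-1)^{ji}\,1\ot_A\wh{t}_H^j$ and the ``algebra'' sum $\sum_{j}(-1)^{j(n-i)+n}\wh{t}_A^j\circ\wh\eta$, together with the error space $F^i(\wh{X}_{n+1})\cap H\wh{J}_{n+1}(h_1,\dots,h_i)$.

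The first thing to settle is the bookkeeping of signs, which is forced by the factor $(-1)^{rs}$ in $\theta_{rs}$. Since $\theta_{rs}$ is bidegree preserving, conjugating a map $\wh{X}_{n-i,i}\to\wh{X}_{r's'}$ multiplies it by $(-1)^{(n-i)i+r's'}$. For the Hopf sum the target is $\wh{X}_{n-i,i+1}$, so the prefactor is $(-1)^{(n-i)i+(n-i)(i+1)}=(-1)^{n-i}$, turning $(-1)^{ji}$ into $(-1)^{ji+n-i}$; for the algebra sum the target is $\wh{X}_{n-i+1,i}$, so the prefactor is $(-1)^{(n-i)i+(n-i+1)i}=(-1)^{i}$, turning $(-1)^{j(n-i)+n}$ into $(-1)^{j(n-i)+n+i}=(-1)^{(j+1)(n-i)}$. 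These are exactly the signs in the statement, so no sign discrepancy remains.

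Next I would identify the conjugated maps and the error space. Because $\theta$ preserves the filtration by $s$, and because multiplication by $\gamma(h)$, $\gamma^{-1}(h)$ together with the weak action sends $\ov A$-factors lying in $\wt f\langle h_1,\dots,h_i\rangle$ to factors still lying there (this is precisely why $\wt f$ was built to be closed under the weak action and under the $K$-bimodule structure), $\theta$ carries $F^i(\wh{X}_{n+1})\cap H\wh{J}_{n+1}$ into $F^i(\ov{X}_{n+1})\cap H\ov{J}_{n+1}$, so the error term transports correctly. For the Hopf part, a direct computation with the definitions of $\theta^{\pm1}$, $\ov\eta$, $\ov{t}_H$ and the convolution inverse $\gamma^{-1}(h)=f^{-1}(S(h^{(2)}),h^{(3)})\#S(h^{(1)})$ should show that $\theta\circ(1\ot_A\wh{t}_H^j)\circ\theta^{-1}=(-1)^{n-i}\,\ov{t}_H^j\circ\ov\eta$ modulo the error space, the new $H$-slot created by $\ov\eta$ playing the role of the prepended $1\ot_A$. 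This yields the first sum of the theorem.

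Finally, the main obstacle is the algebra sum: I would verify that $\theta\circ(\wh{t}_A^j\circ\wh\eta)\circ\theta^{-1}$ equals, modulo the error space, the explicit $j$-th term of the second sum. This is the heaviest computation, because $\wh{t}_A$ rotates the $\ov A$-factors while applying the weak action, so after $\theta^{-1}$ one must push the string $\gamma^{-1}(h_i^{(1)})\cdots\gamma^{-1}(h_1^{(1)})$ through $j$ iterations of $\wh{t}_A$ and reassemble the output through $\theta$. The devices for taming the Sweedler indices are exactly the objects introduced before the statement: the coaction identity $\nu(\gamma^{-1}(h))=\gamma^{-1}(h^{(2)})\ot S(h^{(1)})$ (guaranteeing the $A$-valuedness needed to separate the $A$-part from the $H$-part) and the function $U(\bh_{0i})=T(h_0,S(h_1),\dots,S(h_i))$, which puts $a_0\gamma(h_0)\gamma^{-1}(h_i)\cdots\gamma^{-1}(h_1)$ into the normal form $a_0U(h_0^{(1)},\bh_{1i}^{(2)})\gamma(h_0^{(2)}S(h_1^{(1)}\cdots h_i^{(1)}))$. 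Substituting this normal form and collecting the weak-action twists produced by $\wh{t}_A^j$ should reproduce the displayed term, with the factors $\gamma(h_1^{(5)})\cdots\gamma(h_i^{(5)})$ and the iterated twist $\bigl(\ba_{1j}^{\bh_{1i}^{(4)}}\bigr)^{h_0^{(2)}S(h_1^{(2)}\cdots h_i^{(2)})}$ emerging from the bookkeeping; the remaining terms stay inside $H\ov{J}_{n+1}(h_1,\dots,h_i)$ by the same closure properties of $\wt f$ and the behaviour of $\theta$ on the $\ov d$-maps recorded in Theorem~\ref{th3.1}.
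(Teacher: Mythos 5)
Your proposal is correct and takes essentially the same route as the paper, whose entire proof reads: the theorem ``follows straightforwardly from Proposition~\ref{prop2.6}, the fact that $\ov{D} = \theta\circ\wh{D}\circ\theta^{-1}$, and the formulas of $\theta$ and $\theta^{-1}$.'' Your sign bookkeeping from the factor $(-1)^{rs}$ in $\theta_{rs}$, the identification $\theta\circ(1\ot_A\wh{t}_H^{\,j})\circ\theta^{-1}=(-1)^{n-i}\,\ov{t}_H^{\,j}\circ\ov{\eta}$, and the use of the normal form $a_0\gamma(h_0)\gamma^{-1}(h_i)\cdots\gamma^{-1}(h_1)=a_0U(h_0^{(1)},\bh_{1i}^{(2)})\gamma\bigl(h_0^{(2)}S(h_1^{(1)}\cdots h_i^{(1)})\bigr)$ are exactly the details the paper leaves to the reader.
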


\begin{proof} It follows straightforwardly from Proposition~\ref{prop2.6}, the fact that
$\ov{D} = \theta\xcirc\wh{D}\xcirc \theta^{-1}$, and the formulas of~$\theta$
and~$\theta^{-1}$.
\end{proof}

\subsection{First spectral sequence} Arguing as in \cite[Proposition 3.2]{G-G} we see that,
for each \hbox{$h\in H$}, there is a morphism of complexes
$$
\vartheta^h_*\colon (E\ot\ov{A}^{\ot^*}\ot,b_*) \to (E\ot\ov{A}^{\ot^*}\ot ,b_*),
$$
which is given by $\vartheta^h_r([a_0\gamma(h_0)\ot\ba_{1r}]) = [\gamma(h^{(3)})a_0\gamma(h_0)
\gamma^{-1}(h^{(1)})\ot\ba_{1r}^{h^{(2)}}]$ and that, for each $h,l\in H$, the endomorphisms of
$\Ho_*^K(A,E)$ induced by $\vartheta^h_*\circ \vartheta^l_*$ and by $\vartheta^{hl}_*$
coincide. So, $\Ho_*^K(A,E)$ is a left $H$-module. Let
\begin{align*}
&\wt{d}_s \colon \Ho^K_r(A,E)\ot_k \ov{H}^{\ot^s}\to  \Ho^K_r(A,E)\ot_k \ov{H}^{\ot^{s-1}}
\intertext{and}
&\wt{D}_s \colon \Ho^K_r(A,E)\ot_k \ov{H}^{\ot^s}\to \Ho^K_r(A,E)\ot_k \ov{H}^{\ot^{s+1}}
\end{align*}
be the maps induced by $\ov{d}^1_{rs}$ and $\sum_{j=0}^s (-1)^{js+r} \ov{t}_H^j\xcirc
\eta_{r+s}$, respectively.

\begin{proposition}\label{prop3.4} Assume that $\ov{H}$ is a flat $k$-module. For each $r\ge
0$,
$$
\wt{\Ho^K_r(A,E)} = \bigl(\Ho^K_r(A,E)\ot_k \ov{H}^{\ot^*},\wt{d}_*,\wt{D}_*\bigr)
$$
is a mixed complex and there is a convergent spectral sequence
$$
E^2_{sr} = \HC_s\bigl(\wt{\Ho^K_r(A,E)}\bigr) \Rightarrow \HC^K_{r+s}(E).
$$
\end{proposition}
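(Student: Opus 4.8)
The plan is to realize this as the spectral sequence of a suitable filtration of the cyclic bicomplex $\BC(\ov X,\ov d,\ov D)$, whose total homology computes $\HC^K_*(E)$ by Theorem~\ref{th3.2}. The filtration $F^\bullet(\ov X)$ by the $\ov H$-degree $s$ cannot be transported naively to $\BC$, since $\ov D$ \emph{raises} $s$ by one; the right device is to twist it against the $u$-degree. Concretely, I would set
$$
\mathcal F_p\bigl(\Tot\BC(\ov X,\ov d,\ov D)\bigr) = \bigoplus_{j\ge 0} F^{p-2j}(\ov X)\,u^j,
$$
which is the filtration by the complementary degree $m-r=s+2j$ in total degree $m$. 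Using that $\ov d$ preserves $F^\bullet$ while each $\ov d^l$ ($l\ge 1$) lowers $s$ by $l$, together with the estimate of Theorem~\ref{th3.3} for $\ov D$, one checks directly that the total differential preserves $\mathcal F_\bullet$, so there is an associated homology spectral sequence.

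On the associated graded the only surviving part of $\ov d$ is $\ov d^0$, which by Theorem~\ref{th3.1} is the normalized Hochschild boundary of the $K$-algebra $A$ with coefficients in $E$, tensored over $k$ with $\ide_{\ov H^{\ot^s}}$. Hence $d_0=\ov d^0$ and, because $\ov H$ (and therefore each $\ov H^{\ot^s}$) is flat over $k$, the K\"unneth isomorphism yields
$$
E^1_{sr} = \Ho^K_r(A,E)\ot_k \ov H^{\ot^s},
$$
distributed over the columns of the cyclic bicomplex according to the $u$-degree. The two pieces of the total differential that lower $\mathcal F_\bullet$ by exactly one are $\ov d^1$ (same $u$-degree) and the top part $\sum_j (-1)^{js+r}\ov t_H^j\xcirc\ov\eta$ of $\ov D$ (dropping the $u$-degree by one); by Theorems~\ref{th3.1} and~\ref{th3.3} these induce on $E^1$ precisely the maps $\wt d_*$ and $\wt D_*$. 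Thus $d_1$ is exactly the total differential of $\BC\bigl(\Ho^K_r(A,E)\ot_k\ov H^{\ot^*},\wt d,\wt D\bigr)$, and the identity $d_1\xcirc d_1=0$, read off in the three distinct bidegrees occupied by $\wt d^2$, by $\wt d\wt D+\wt D\wt d$, and by $\wt D^2$, shows at once that $\bigl(\Ho^K_r(A,E)\ot_k\ov H^{\ot^*},\wt d,\wt D\bigr)$ is a mixed complex. Consequently $E^2_{sr}=\HC_s\bigl(\wt{\Ho^K_r(A,E)}\bigr)$.

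For convergence, I would observe that in each total degree $m$ the complex $\Tot\BC(\ov X,\ov d,\ov D)_m=\bigoplus_{0\le j\le m/2}\ov X_{m-2j}$ is a finite direct sum, so $\mathcal F_\bullet$ is bounded there; the spectral sequence therefore converges, and since the total homology is $\HC^K_*(E)$, the $E^\infty_{sr}$ are the graded pieces of $\HC^K_{r+s}(E)$. The main obstacle is the bookkeeping at the first page: one must fix the $2j$-twist of the filtration precisely so that the top part of $\ov D$ falls into $d_1$ rather than into $d_0$, and then invoke the ``modulo $F^i\cap H\ov J$'' estimate of Theorem~\ref{th3.3} to guarantee that the remaining, lower-order terms of $\ov D$ drop $\mathcal F_\bullet$ by at least two, so that they do not disturb the identification $d_1=\wt d+\wt D$. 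The flatness step and the verification that $\ov d$ and $\ov D$ preserve $\mathcal F_\bullet$ are then routine.
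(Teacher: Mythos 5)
Your proposal is correct and follows essentially the same route as the paper: the same $2j$-twisted filtration $\bigoplus_j F^{p-2j}(\ov X)u^j$ of $\Tot(\BC(\ov X,\ov d,\ov D))$, the same identification $d_0=\ov d^0$ and (via flatness of $\ov H$) $E^1$ as columns of $\Ho^K_r(A,E)\ot_k\ov H^{\ot^*}$, the same reading of $d_1=\wt d+\wt D$ whose square being zero gives the mixed complex structure, and the same boundedness-plus-Theorem~\ref{th3.2} convergence argument. The only difference is that you spell out details the paper leaves as "a straightforward computation," such as the filtration-preservation check via Theorem~\ref{th3.3} and the K\"unneth step.
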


\begin{proof} Consider the spectral sequence $(E^v_{sr},d^v_{sr})_{v\ge 0}$, associated with
the filtration
$$
F^0\bigl(\Tot(\BC(\ov{X},\ov{d},\ov{D}))\bigr) \subseteq
F^1\bigl(\Tot(\BC(\ov{X},\ov{d},\ov{D}))\bigr) \subseteq
F^2\bigl(\Tot(\BC(\ov{X},\ov{d},\ov{D}))\bigr) \subseteq \cdots
$$
of the complex $\Tot(\BC(\ov{X},\ov{d},\ov{D}))$, given by
$$
F^i\bigl(\Tot(\BC(\ov{X},\ov{d},\ov{D}))_n\bigr) = \bigoplus_{j\ge 0} F^{i-2j}(\ov{X}_{n-2j})
u^j.
$$
An straightforward computation shows that

\begin{itemize}

\smallskip

\item $E^0_{sr} = \displaystyle{\bigoplus_{j\ge 0}} \Bigl(\Bigl(E\ot\ov{A}^{\ot^r}\ot
\Bigr)\ot_k \ov{H}^{\ot^{s-2j}}\Bigr)u^j$ and $d^0_{sr}$ is  $\displaystyle{\bigoplus_{j\ge
0}}\ov{d}^0_{r,s-2j}u^j$,

\smallskip

\item $E^1_{sr} = \displaystyle{\bigoplus_{j\ge 0}} \Bigl(\Ho_r(A,E)\ot_k
\ov{H}^{\ot^{s-2j}}\Bigr)u^j$ and $d^1_{sr}$ is $\wt{d}+\wt{D}$.

\end{itemize}
From this it follows easily that $\wt{\Ho^K_r(A,E)}$ is a mixed complex and
$$
E^2_{sr} = \HC_s\bigl(\wt{\Ho^K_r(A,E)}\bigr).
$$
In order to finish the proof it suffices to note that the filtration of
$\Tot(\BC(\ov{X},\ov{d},\ov{D}))$ introduced above is canonically bounded, and so, by
Theorem~\ref{th3.2}, the spectral sequence $(E^v_{sr})_{v\ge 0}$ converges to the cyclic
homology of the $K$-algebra $E$.
\end{proof}

\begin{corollary}\label{cor3.5} If $\Ho_i^K(A,E) = 0$ for all $i>0$, then $\HC^K_n(E) =
\HC_n(\wt{\Ho^K_0(A,E)})$.
\end{corollary}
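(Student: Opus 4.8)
The plan is to read the statement off directly from the convergent spectral sequence
$E^2_{sr} = \HC_s\bigl(\wt{\Ho^K_r(A,E)}\bigr) \Rightarrow \HC^K_{r+s}(E)$
of Proposition~\ref{prop3.4}. The hypothesis $\Ho^K_i(A,E) = 0$ for all $i>0$ forces the underlying graded module $\Ho^K_r(A,E)\ot_k\ov{H}^{\ot^*}$ of the mixed complex $\wt{\Ho^K_r(A,E)}$ to vanish for every $r>0$. Hence $\wt{\Ho^K_r(A,E)} = 0$ for $r>0$, and therefore $E^2_{sr} = \HC_s\bigl(\wt{\Ho^K_r(A,E)}\bigr) = 0$ whenever $r>0$. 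Thus the whole $E^2$ page is concentrated in the single row $r=0$, where $E^2_{s0} = \HC_s\bigl(\wt{\Ho^K_0(A,E)}\bigr)$.

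Next I would verify that a spectral sequence concentrated in one row degenerates at $E^2$. Tracking the indices through the filtration $F^i\bigl(\Tot(\BC(\ov{X},\ov{d},\ov{D}))_n\bigr) = \bigoplus_{j\ge 0} F^{i-2j}(\ov{X}_{n-2j})u^j$ used in Proposition~\ref{prop3.4}, one checks that the filtration degree of a term in $\ov{X}_{r,s-2j}u^j$ equals the cyclic degree $s$, while the total degree is $r+s$. Consequently, for $v\ge 2$ the differential has the form $\smash{d^v\colon E^v_{sr}\to E^v_{s-v,\,r+v-1}}$. Any such differential leaving the surviving row $r=0$ lands in Hochschild degree $r=v-1\ge 1$, while any entering it issues from Hochschild degree $r=1-v\le -1$; both groups are zero, the former by the vanishing just established and the latter because the Hochschild degree is nonnegative. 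Hence no higher differential meets the row $r=0$, and $E^2_{s0} = E^\infty_{s0}$.

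Finally, since the filtration is canonically bounded (as already invoked in the proof of Proposition~\ref{prop3.4}), the associated graded of $\HC^K_n(E)$ is $\bigoplus_{r+s=n} E^\infty_{sr} = E^\infty_{n0}$, so that $\HC^K_n(E) \cong E^2_{n0} = \HC_n\bigl(\wt{\Ho^K_0(A,E)}\bigr)$, which is the assertion. The only step requiring genuine care is confirming the bidegree $(-v,\,v-1)$ of the differentials from the precise indexing conventions fixed in Proposition~\ref{prop3.4}; this is entirely routine bookkeeping, and once it is in hand both the degeneration and the identification of the edge isomorphism are immediate.
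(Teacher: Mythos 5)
Your proof is correct and is exactly the argument the paper intends: Corollary~\ref{cor3.5} is stated without proof as an immediate consequence of Proposition~\ref{prop3.4}, and your collapse argument (vanishing of the rows $r>0$, degeneration at $E^2$, and identification of $\HC^K_n(E)$ with the single surviving graded piece via the canonically bounded filtration) is the standard way to make that implication explicit. Your bidegree bookkeeping is also right: since $s$ is the filtration degree in the conventions of Proposition~\ref{prop3.4}, $d^v$ has bidegree $(-v,\,v-1)$, so no nonzero differential can enter or leave the row $r=0$.
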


\begin{proposition}\label{prop3.6} Assume $H$ is a separable algebra and let $t$ be the
integral of $H$ satisfying $\epsilon(t) = 1$. Then
$$
E^2_{sr} = \begin{cases} \Ho_0\bigl(H,\wt{\Ho^K_r(A,E)}\bigr) &\text{if $s$ is even,}\\ 0 &
\text{if $s$ is odd,} \end{cases}
$$
and for $s$ even the map $d^2_{sr}\colon E^2_{sr}\to E^2_{s-2,r+1}$ is given by
\begin{align*}
d^2 &\Bigl(\ov{\sum [a_0\gamma(h) \ot \ba_{1r}]}\Bigr) = \sum_{j=0}^r \ov{\sum (-1)^{(j+1)r}
\Bigl[\gamma(h^{(2)})\ot a_{j+1,r}\ot a_0\ot a_{1j}^{h^{(1)}}\Bigr]}\\
& + \sum_{j=0}^r(-1)^j \ov{\sum \Bigl[\gamma(t^{(5)}h^{(4)}) a_0 \gamma^{-1}(t^{(1)})\ot
(\ba_{1j}^{h^{(1)}})^{t^{(2)}}\ot f(t^{(3)},h^{(2)})\ot \ba_{j+1,r}^{t^{(4)}h^{(3)}}\Bigr]},
\end{align*}
where $\sum [a_0\gamma(h) \ot \ba_{1r}]$ is a $r$-cycle of $\bigl(E\ot \ov{A}^{\ot^*}\ot,b_*
\bigr)$ and $\ov{\sum [a_0\gamma(h) \ot \ba_{1r}]}$ denotes its class in
$\Ho_0\bigl(H,\wt{\Ho^K_r(A,E)}\bigr)$, etcetera.
\end{proposition}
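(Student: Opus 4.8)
The plan is to read off both assertions from the spectral sequence $(E^v_{sr})_{v\ge 0}$ constructed in the proof of Proposition~\ref{prop3.4}, using separability of $H$ only to concentrate an auxiliary Hochschild homology in degree zero. Recall from that proof that the filtration degree is the index $s$, that $E^1_{sr}=\bigoplus_{j\ge 0}\bigl(\Ho^K_r(A,E)\ot_k\ov H^{\ot^{s-2j}}\bigr)u^j$ with $d^1=\wt d+\wt D$, and that $E^2_{sr}=\HC_s\bigl(\wt{\Ho^K_r(A,E)}\bigr)$. The first step is to recognize the $b$-part $\bigl(\Ho^K_r(A,E)\ot_k\ov H^{\ot^*},\wt d\bigr)$ of the mixed complex $\wt{\Ho^K_r(A,E)}$ as the normalized complex computing $\Ho_*\bigl(H,\Ho^K_r(A,E)\bigr)$, where $\Ho^K_r(A,E)$ carries the left $H$-action induced by $\vartheta$ and the trivial (counit) right action. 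Indeed, in the three summands of $\ov d^1$ (Theorem~\ref{th3.1}) the first term is the outer face $h_s\cdot m$ realized through $\vartheta^{h_s}$, the middle terms are the inner faces multiplying $h_ih_{i+1}$, and the last term is the outer face $m\cdot h_1=\ep(h_1)m$.

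With this identification, separability gives $\Ho_q(H,M)=0$ for every $q>0$ and every $H$-bimodule $M$, so the Hochschild homology of $\wt{\Ho^K_r(A,E)}$ is concentrated in degree zero, where it is $\Ho_0\bigl(H,\Ho^K_r(A,E)\bigr)$. For any mixed complex whose $b$-homology is concentrated in degree zero the operator induced by $B$ on homology vanishes for degree reasons, so its cyclic homology equals that homology in each even degree and is zero in each odd degree; applied to $\wt{\Ho^K_r(A,E)}$ this yields the stated value of $E^2_{sr}$. To prepare the computation of $d^2$ I make this concentration effective: from the integral $t$ with $\ep(t)=1$ I write down the averaging projection onto the coinvariants and an explicit contracting homotopy $\rho$ of $\bigl(\Ho^K_r(A,E)\ot_k\ov H^{\ot^*},\wt d\bigr)$ in positive $\ov H$-degree.

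For $d^2_{sr}\colon E^2_{sr}\to E^2_{s-2,r+1}$ I split the total differential $\ov d+\ov D$ of $\Tot\bigl(\BC(\ov X,\ov d,\ov D)\bigr)$ according to the change it produces in the filtration degree, which on $\ov X_{rs}u^i$ equals $s+2i$. The summands lowering it by one are $\ov d^1$ and the first sum of $\ov D$ (Theorem~\ref{th3.3}), which reassemble into $d^1$; those lowering it by two are $\ov d^2$ (Theorem~\ref{th3.1}) and the second, ``$A$-cyclic'' summand of $\ov D$, which I denote $\ov D^{(0)}$. Following the standard recipe, I represent a class of $E^2_{sr}$ by a coinvariant $r$-cycle $z_0=\sum[a_0\gamma(h)\ot\ba_{1r}]$ in $\ov H$-degree zero and correct it to an element $\wt z=z_0+c+\cdots$ whose first correction $c=\rho\bigl(\wt D(z_0)\bigr)$ sits in $\ov H$-degree two, so that $d^1(\wt z)$ falls into $F^{s-2}$; then $d^2[z_0]$ is the class of the filtration-$(s-2)$ part of $(\ov d+\ov D)(\wt z)$. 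Evaluating $\ov D^{(0)}$ on $z_0$, where at $\ov H$-degree zero $U(h_0^{(1)})$ collapses to $\ep(h_0^{(1)})1$ via $\gamma\ast\gamma^{-1}=\ep\,1$, produces the first sum; evaluating $\ov d^2_{r,2}$ on the $t$-correction $c$, whose two $H$-slots specialize to $t$ and $h$, produces the second sum, the value $f(t^{(3)},h^{(2)})$ being the image of the cocycle term of $\ov d^2$.

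The main obstacle is the honest verification of the second sum. One must fix $\rho$ so that $c$ is explicit, control the repeated comultiplications of $t$ and of $h$ (five copies of $t$ and four of $h$ in the final formula) together with the cancellations coming from $\gamma\ast\gamma^{-1}=\ep\,1$ and from the coinvariance of $T(h_0,S(h_1),\dots,S(h_i))$, and show that the remaining terms of the zig-zag, as well as the ambiguity modulo $F^i\cap H\ov J_{n+1}(h_1,\dots,h_i)$ in Theorem~\ref{th3.3}, are annihilated under the identification of $E^2_{s-2,r+1}$ with the coinvariants $\Ho_0\bigl(H,\Ho^K_{r+1}(A,E)\bigr)$ (again because the Hochschild homology of $H$ vanishes in positive degree). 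It then remains to pin down the signs $(-1)^{(j+1)r}$ and $(-1)^j$ and to check that the resulting map is well defined, i.e. independent of the chosen cyclic representative $\sum[a_0\gamma(h)\ot\ba_{1r}]$.
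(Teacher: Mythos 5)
Your proposal is correct and takes essentially the same route as the paper's own (very terse) proof: the paper likewise treats the first assertion as immediate from separability and obtains the $d^2$ formula by a direct zig-zag computation in the filtration spectral sequence. Its only recorded hint is that the needed correction term is $[a_0\gamma(h^{(1)})\ot\ba_{1r}]\ot_k t\ot_k h^{(2)}$, which is exactly your $t$-correction $c$ whose $\ov{d}^{\,2}$-image produces the second sum, while the filtration-drop-two part of $\ov{D}$ applied to $z_0$ (with $U$ collapsing by $\gamma*\gamma^{-1}=\ep 1$) produces the first.
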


\begin{proof} The first assertion is trivial and the second one follows from a direct
computation using the construction of the spectral sequence of a filtrated complex. For this it
is convenient to note that
$$
\ov{t}_H\xcirc \eta\bigl([a_0\gamma(h)\ot\ba_{1r}]\bigr)-\ov{d}^1\bigl([a_0\gamma(h^{(1)})
\ot\ba_{1r}]\ot_k t\ot_k h^{(2)}\bigr) \in \ima(\wt{d}_s).
$$
We leave the details to the reader.
\end{proof}

\subsection{Second spectral sequence} In this subsection we assume that $f$ takes values in
$K$. Under this hypothesis the maps $\ov{d}^l$ vanish for all $l\ge 2$ and we obtain a spectral
sequence that generalizes those given in \cite{A-K} and \cite{K-R}.

\smallskip

For each $r\ge 0$, we define a map
$$
\xymatrix@R=1pt{{H\ot_k \bigl(E\ot\ov{A}^{\ot^r}\ot\bigr)}\rto & {E\ot \ov{A}^{\ot^r}}\ot,\\
{}\save[]!<-5pt,0pt>\Drop{{h\ot \bx}}\restore
\save[]!<8pt,0pt>\Drop{}\ar@{|->}[0,1]+<-22pt,0pt> \restore &
{}\save[]+<-6pt,0pt>\Drop{{h\blacktriangleright \bx}}\restore}
$$
by $h\blacktriangleright [a\gamma(u)\ot\ba_{1r}] =
\bigl[\gamma(h^{(3)})a\gamma(u)\gamma^{-1}(h^{(1)})\ot \ba_{1r}^{h^{(2)}}\bigr]$.

\begin{proposition}\label{prop3.7} For each $r\ge 0$ the map $\blacktriangleright$ is an
action of $H$ on $E\ot \ov{A}^{\ot^r}\ot$.
\end{proposition}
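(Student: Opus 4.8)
The plan is to observe that $h\blacktriangleright-$ is exactly the operator $\vartheta^h_r$ introduced just before the first spectral sequence, so that well-definedness on $E\ot\ov{A}^{\ot^r}\ot$ (descent through the $\ot_K$'s and through the trace quotient $M/[M,K]$) and $k$-linearity are already granted by the fact that $\vartheta^h_*$ is a morphism of complexes. Only the two module axioms then remain: $1\blacktriangleright\bx=\bx$ and $(hl)\blacktriangleright\bx = h\blacktriangleright(l\blacktriangleright\bx)$. The unit axiom is immediate, since $\gamma(1)=\gamma^{-1}(1)=1_E$, since $f(1,-)=\ep(-)1_A$ by normality, and since $a^1=a$ by the third weak-action axiom, so that $1\blacktriangleright[a\gamma(u)\ot\ba_{1r}] = [a\gamma(u)\ot\ba_{1r}]$.

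The substance is the associativity axiom. First I would compute $h\blacktriangleright(l\blacktriangleright\bx)$ for a generator $\bx=[a\gamma(u)\ot\ba_{1r}]$ by substituting $l\blacktriangleright\bx$ into the defining formula, so that the first tensor factor becomes $\gamma(h^{(3)})\gamma(l^{(3)})a\gamma(u)\gamma^{-1}(l^{(1)})\gamma^{-1}(h^{(1)})$ and the remaining factors become $\bigl(\ba_{1r}^{l^{(2)}}\bigr)^{h^{(2)}}$. The goal is to bring this into the shape of $\vartheta^{hl}(\bx)=\bigl[\gamma(h^{(3)}l^{(3)})a\gamma(u)\gamma^{-1}(h^{(1)}l^{(1)})\ot\ba_{1r}^{h^{(2)}l^{(2)}}\bigr]$, using three rewritings: the straightening identity $\gamma(x)\gamma(y)=f(x^{(1)},y^{(1)})\gamma(x^{(2)}y^{(2)})$ (itself a consequence of the multiplication of $E$ and axioms (1)--(2) of the weak action) on the two leftmost $\gamma$'s; its convolution-inverse analogue $\gamma^{-1}(y)\gamma^{-1}(x)=\gamma^{-1}(x^{(2)}y^{(2)})f^{-1}(x^{(1)},y^{(1)})$ on the two rightmost $\gamma^{-1}$'s; and the twisted module condition (iii), which converts each iterated action $\bigl(a_i^{l}\bigr)^{h}$ into $a_i^{hl}$ at the cost of a left factor $f(h^{(1)},l^{(1)})$ and a right factor $f^{-1}(h^{(3)},l^{(3)})$ in $A$.

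Each rewriting produces cocycle factors $f(h^{(\cdot)},l^{(\cdot)})$ and $f^{-1}(h^{(\cdot)},l^{(\cdot)})$, and these are precisely what prevents $\vartheta^h\circ\vartheta^l$ and $\vartheta^{hl}$ from agreeing on the nose in the general situation of the first spectral sequence (where they agree only on $\Ho^K_*(A,E)$). Here I would invoke the standing hypothesis $f\in K$: all these factors then lie in $K$, and since the interior tensor symbols are $\ot_K$ and the outer slot is the trace quotient $M/[M,K]$, they may be transported across tensor boundaries and cycled around the word. The plan is to push every correction to a standard position so that the interior factors coming from consecutive applications of (iii) telescope in pairs, the boundary factor $f(h^{(1)},l^{(1)})$ produced by (iii) merges with the factor from the $\gamma^{-1}$-identity, the $\gamma$-identity factor is reconciled with it through the cocycle condition (ii), and the single leftover straddling the two ends is killed by the trace relation $[\lambda m]=[m\lambda]$. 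After this collapse the Sweedler legs reassemble into $h^{(1)}l^{(1)}\ot h^{(2)}l^{(2)}\ot h^{(3)}l^{(3)}$, which is exactly $\vartheta^{hl}(\bx)$.

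The main obstacle I anticipate is the purely combinatorial one of tracking the Sweedler indices through the triple rewriting and verifying that the $K$-valued cocycle factors cancel \emph{exactly} modulo $[-,K]$ and $\ot_K$, rather than merely up to a boundary. To keep this under control I would first settle the case $r=0$, where the $\ov{A}$-factors are absent and the computation isolates the conjugation $e\mapsto\bigl[\gamma(h^{(2)})e\gamma^{-1}(h^{(1)})\bigr]$ on $E/[E,K]$ together with the cancellation of the $\gamma,\gamma^{-1}$ cocycle factors via (ii); the general $r$ then only adds the interior telescoping produced by (iii), which is handled uniformly.
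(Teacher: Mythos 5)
Your proposal is essentially the paper's own proof: the paper likewise treats unitality as immediate and proves associativity by exactly your three rewritings --- the straightening identity $\gamma(l)\gamma(h)=f(l^{(1)},h^{(1)})\gamma(l^{(2)}h^{(2)})$, the conjugation formula $\bigl(\ba_{1r}^{h}\bigr)^{l}=f(l^{(1)},h^{(1)})\ba_{1r}^{l^{(2)}h^{(2)}}f^{-1}(l^{(3)},h^{(3)})$ coming from the twisted module condition (whose interior factors telescope across $\ot_K$ precisely because $f$ takes values in $K$), and an identity for $\gamma^{-1}(h)\gamma^{-1}(l)$ --- and then cancels the surviving $K$-valued cocycle factors using the $\ot_K$-relations, the trace relation, and the convolution-inverse property, just as you plan. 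One concrete correction: your ``convolution-inverse analogue'' has the Sweedler legs backwards; by uniqueness of convolution inverses it must read $\gamma^{-1}(y)\gamma^{-1}(x)=\gamma^{-1}(x^{(1)}y^{(1)})\,f^{-1}(x^{(2)},y^{(2)})$ (with $\gamma^{-1}$ on the \emph{first} legs), and only with this assignment does the $f^{-1}$ sit at the right end of the $E$-slot, adjacent across $\ot_K$ to the $f$ produced by (iii), so that the pair cancels as you intend --- with your stated indexing the two factors are separated by an intervening leg and the cancellation fails. Note also that the cocycle condition (ii) is not needed where you invoke it: with the corrected identity the front straightening factor cancels against the trailing $f^{-1}$ from (iii) purely by the trace relation plus convolution inversion; in the paper (ii) enters only inside its alternative, more laborious derivation of the $\gamma^{-1}(h)\gamma^{-1}(l)$ identity from the explicit antipode formula for $\gamma^{-1}$.
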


\begin{proof} It is trivial that $\blacktriangleright$ is unitary. Next we verify the
associative property. By definition
$$
l\blacktriangleright \bigl(h\blacktriangleright [a\gamma(u)\ot\ba_{1r}]\bigr) =
\Bigl[\gamma(l^{(3)})\gamma(h^{(3)})a\gamma(u)\gamma^{-1}(h^{(1)})\gamma^{-1}(l^{(1)})\ot
\bigl(\ba_{1r}^{h^{(2)}}\bigr)^{l^{(2)}}\Bigr].
$$
Since
$$
\bigl(\ba_{1r}^h\bigr)^l = f(l^{(1)},h^{(1)})\ba_{1r}^{l^{(2)}h^{(2)}}
f^{-1}(l^{(3)},h^{(3)}),\qquad \gamma(l)\gamma(h) = f(l^{(1)},h^{(1)}) \gamma(l^{(2)}h^{(2)})
$$
and $f^{-1}$ is the convolution inverse of $f$, we have
$$
l\blacktriangleright \bigl(h\blacktriangleright [a\gamma(u)\ot\ba_{1r}]\bigr)=
\Bigl[\gamma(l^{(4)}h^{(4)})a\gamma(u)\gamma^{-1}(h^{(1)})\gamma^{-1}(l^{(1)})\ot f(
l^{(2)},h^{(2)})\ba_{1r}^{l^{(3)}h^{(3)}}\Bigr].
$$
Using now that, by the twisted module condition applied twice,
\begin{align*}
\gamma^{-1}(h)\gamma^{-1}(l)&\!=\! f^{-1}\bigl(S(h^{(2)}), h^{(3)}\bigr) \gamma\bigl(S(h^{(1)})
\bigr)f^{-1}\bigl(S(l^{(2)}), l^{(3)}\bigr)\gamma\bigl(S(l^{(1)})\bigr)\\
&\!=\! f^{-1}\bigl(S(h^{(3)}), h^{(4)}\bigr)f^{-1}\bigl(S(l^{(3)}),
l^{(4)}\bigr)f\bigl(S(h^{(2)}),S(l^{(2)})\bigr)\gamma\bigl(S(l^{(1)}h^{(1)})\bigr)\\
&\!=\! f^{-1}\bigl(S(h^{(3)}),h^{(4)}\bigr) f^{-1}\bigl(S(h^{(2)})S(l^{(2)}),l^{(3)}\bigr)
\gamma\bigl(S(l^{(1)}h^{(1)})\bigr)\\
&\!=\! f^{-1}\bigl(S(l^{(3)}h^{(3)})l^{(4)},h^{(4)}\bigr) f^{-1}\bigl(S(l^{(2)}h^{(2)}),
l^{(5)}\bigr) \gamma\bigl(S(l^{(1)}h^{(1)})\bigr)\\
&\!=\! f^{-1}\bigl(S(l^{(2)}h^{(2)}),l^{(3)}h^{(3)}\bigr) f^{-1}\bigl(l^{(4)},h^{(4)}\bigr)
\gamma\bigl(S(l^{(1)}h^{(1)})\bigr),
\end{align*}
and again that $f^{-1}$ is the convolution inverse of $f$, we obtain
\begin{align*}
l\blacktriangleright \bigl(h\blacktriangleright[a\gamma(u)\ot\ba_{1r}]\bigr)& = \Bigl[
\gamma(v^{(5)})a\gamma(u)f^{-1}\bigl(S(v^{(2)}),v^{(3)}\bigr)\gamma\bigl(S(v^{(1)})\bigr)\ot
\ba_{1r}^{v^{(4)}}\Bigr]\\
& = \Bigl[\gamma(v^{(3)})a\gamma(u) \gamma^{-1}(v^{(1)})\ot \ba_{1r}^{v^{(2)}}\Bigr],
\end{align*}
where $v = lh$. Since the last expression equals $(lh) \blacktriangleright [a\gamma(u)\ot
\ba_{1r}]$, this finishes the proof.
\end{proof}

For each $r\ge 0$, let $\mathcal{M}_r$ be $E\ot\ov{A}^{\ot^r}\ot$, endowed with the left
$H$-module structure given by $\blacktriangleright$. For each $r,s\ge 0$, let
$\mathcal{B}_{rs}\colon \mathcal{M}_r\ot_k \ov{H}^{\ot_k^s}\to \mathcal{M}_{r+1}
\ot_k\ov{H}^{\ot_k^s}$ be the map defined by
\begin{align*}
\mathcal{B}(\bx) & = \sum_{j=0}^r(-1)^{(j+1)r}\Bigl[\gamma\bigl(h_0^{(3)}S(h_1^{(1)}
\cdots h_s^{(1)})\bigr)\gamma(h_1^{(5)})\cdots\gamma(h_s^{(5)})\\
& \,\ot \ba_{j+1,r} \ot a_0U(h_0^{(1)},\bh_{1s}^{(3)}) \ot \bigl(\ba_{1j}^{\bh_{1s}^{(4)}}
\bigr)^{ h_0^{(2)}S(h_1^{(2)} \cdots h_s^{(2)})}\Bigr]\ot_k \bh_{1s}^{(6)},
\end{align*}
where $\bx = \bigl[a_0\gamma(h_0)\ot\ba_{1r}\bigr] \ot_k\bh_{1s}$. For each $r,s\ge 0$, let
$$
\partial_r \colon \Ho_s(H,\mathcal{M}_r)\to \Ho_s(H,\mathcal{M}_{r-1})\quad\text{and}\quad
\mathcal{D}_r \colon \Ho_s(H,\mathcal{M}_r)\to \Ho_s(H,\mathcal{M}_{r+1})
$$
be the maps induced by $\ov{d}^0_{rs}$ and $\mathcal{B}_{rs}$, respectively

\begin{proposition}\label{prop3.8} For each $s\ge 0$,
$$
\wt{\Ho^K_s(H,E)} = \bigl(\Ho_s(H,\mathcal{M}_*),\partial_*,\mathcal{D}_*\bigr)
$$
is a mixed complex and there is a convergent spectral sequence
$$
\mathcal{E}^2_{rs} = \HC_r\bigl(\wt{\Ho^K_s(H,E)}\bigr) \Rightarrow \HC^K_{r+s}(E).
$$
\end{proposition}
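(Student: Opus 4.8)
The plan is to mirror the argument of Proposition~\ref{prop3.4}, but to use the \emph{dual} filtration: the one by the $A$-degree $r$ rather than by the $H$-degree $s$. The extra input that makes this work is the standing hypothesis $f(H\ot H)\sub K$. It forces $\ov{d}^l=0$ for all $l\ge 2$, so that $(\ov{X},\ov{d})$ is a genuine double complex, with horizontal differential $\ov{d}^0_{rs}$ (lowering $r$) and vertical differential $\ov{d}^1_{rs}$ (lowering $s$). Moreover, since $\wt{f}(h_1,\dots,h_i)\sub K$ whenever $f$ takes values in $K$, the submodules $H\ov{J}_{n+1}(h_1,\dots,h_i)$ vanish, so the formula of Theorem~\ref{th3.3} holds on the nose: $\ov{D}=\ov{D}_\eta+\mathcal{B}$, where $\ov{D}_\eta=\sum_j (-1)^{ji+n-i}\ov{t}_H^j\xcirc\eta$ raises $s$ by one while keeping $r$ fixed, and the $r$-raising part of $\ov{D}$ is \emph{exactly} the operator $\mathcal{B}$ of the statement.

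First I would record that, for each fixed $r$, the complex $(\ov{X}_{r*},\ov{d}^1_{r*})$ is, up to the global sign $(-1)^r$, the standard bar complex computing $\Ho_*(H,\mathcal{M}_r)$. Reading off the three summands of $\ov{d}^1_{rs}$ in Theorem~\ref{th3.1}, the term with $\ep(h_1)$ is the counit face, the middle terms are the multiplication faces $h_ih_{i+1}$, and the last term is precisely $h_s\blacktriangleright(-)$ for the action $\blacktriangleright$ of Proposition~\ref{prop3.7}. Hence $\ov{d}^1_{r*}$ is the boundary of the complex of the left $H$-module $\mathcal{M}_r=E\ot\ov{A}^{\ot^r}\ot$, and no flatness of $\ov{H}$ is needed (in contrast to Proposition~\ref{prop3.4}, where it served only to commute homology past $\ot_k\ov{H}^{\ot^s}$). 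I would then filter $\Tot\bigl(\BC(\ov{X},\ov{d},\ov{D})\bigr)$ by
\[
\mathcal{G}^i\bigl(\Tot(\BC(\ov{X},\ov{d},\ov{D}))_n\bigr)=\bigoplus_{j\ge 0}\mathcal{G}^{i-2j}(\ov{X}_{n-2j})u^j,\qquad \mathcal{G}^i(\ov{X}_m)=\bigoplus_{0\le r\le i}\ov{X}_{r,m-r}.
\]

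Running the same bidegree bookkeeping as in Proposition~\ref{prop3.4}, the vertical differential $\ov{d}^1$ preserves the filtration, so it is the $\mathcal{E}^0$-differential and $\mathcal{E}^1_{rs}=\bigoplus_{j\ge 0}\Ho_s(H,\mathcal{M}_{r-2j})u^j$. After the $u$-shift, both $\ov{d}^0$ (lowering $r$ by one) and $\mathcal{B}$ (raising $r$ by one, $j\mapsto j-1$) lower the filtration by exactly one, so together they give the $\mathcal{E}^1$-differential; they induce $\partial_*$ and $\mathcal{D}_*$ respectively, and $(\mathcal{E}^1,d^1)$ is precisely $\Tot\bigl(\BC(\Ho_s(H,\mathcal{M}_*),\partial_*,\mathcal{D}_*)\bigr)$. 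The remaining part $\ov{D}_\eta$ lowers the filtration by two, so it contributes only from $\mathcal{E}^2$ onward. The mixed-complex axioms $\partial\xcirc\partial=0$, $\mathcal{D}\xcirc\mathcal{D}=0$ and $\partial\xcirc\mathcal{D}+\mathcal{D}\xcirc\partial=0$ follow by projecting $\ov{d}^2=0$, $\ov{D}^2=0$ and $\ov{d}\,\ov{D}+\ov{D}\,\ov{d}=0$ to the relevant bidegrees and passing to $H$-homology, whence $\wt{\Ho^K_s(H,E)}$ is a mixed complex and $\mathcal{E}^2_{rs}=\HC_r\bigl(\wt{\Ho^K_s(H,E)}\bigr)$. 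Finally $0=\mathcal{G}^{-1}(\Tot_n)\sub\mathcal{G}^0(\Tot_n)\sub\dots\sub\mathcal{G}^n(\Tot_n)=\Tot_n$, so the filtration is canonically bounded and the sequence converges to the homology of $\Tot\bigl(\BC(\ov{X},\ov{d},\ov{D})\bigr)$, which is $\HC^K_*(E)$ by Theorem~\ref{th3.2}.

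The main obstacle I anticipate is the delicate bidegree bookkeeping after the $u$-shift: one must confirm that $\ov{d}^0$ and $\mathcal{B}$ fall on the \emph{same} page $\mathcal{E}^1$ while $\ov{D}_\eta$ is pushed to $\mathcal{E}^2$. This is exactly what distinguishes the present filtration from the one in Proposition~\ref{prop3.4}, where it was $\ov{D}_\eta$ (not $\mathcal{B}$) that contributed to $d^1$. A closely related, and essential, subtlety is the cross relation: one checks that $\partial$ and $\mathcal{D}$ anti-commute with $\ov{d}^1$ already on the nose, so they are well defined on $H$-homology, but the identity $\partial\xcirc\mathcal{D}+\mathcal{D}\xcirc\partial=0$ holds only \emph{after} passing to $\mathcal{E}^1$, since the bidegree-$(0,0)$ part of $\ov{d}\,\ov{D}+\ov{D}\,\ov{d}=0$ also contains the cross-terms $\ov{d}^1\ov{D}_\eta+\ov{D}_\eta\ov{d}^1$, which vanish precisely because $\ov{d}^1$ acts as zero on $\Ho_s(H,\mathcal{M}_r)$. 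Verifying that the Theorem~\ref{th3.3} correction modules $H\ov{J}_{n+1}$ genuinely vanish under $f(H\ot H)\sub K$ is the remaining point that guarantees $\mathcal{D}$ is induced by the closed-form operator $\mathcal{B}$ rather than by $\mathcal{B}$ up to lower-filtration noise.
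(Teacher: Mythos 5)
Your proposal is correct and takes essentially the same route as the paper: the paper's proof filters $\Tot\bigl(\BC(\ov{X},\ov{d},\ov{D})\bigr)$ by exactly your $\mathcal{G}^i$ (there called $\mathcal{F}^i$, with $\mathcal{F}^l(\ov{X}_m)=\bigoplus_{0\le r\le l}\ov{X}_{r,m-r}$), identifies $\delta^0$ with $\ov{d}^1$, $\mathcal{E}^1_{rs}=\bigoplus_{j\ge 0}\Ho_s(H,\mathcal{M}_{r-2j})u^j$ with $\delta^1=\partial+\mathcal{D}$, and concludes by canonical boundedness together with Theorem~\ref{th3.2}. The bidegree bookkeeping, the vanishing of $H\ov{J}_{n+1}$ (since $\wt{f}\langle h_1,\dots,h_i\rangle\sub K$ dies in $\ov{A}=A/K$), and the verification of the mixed-complex axioms that you spell out are precisely the details the paper compresses into ``an straightforward computation shows'' and ``it is easy to see.''
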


\begin{proof} Consider the spectral sequence $(\mathcal{E}^v_{rs},\delta^v_{rs})_{v\ge 0}$,
associated with the filtration
$$
\mathcal{F}^0\bigl(\Tot(\BC(\ov{X},\ov{d},\ov{D}))\bigr) \subseteq
\mathcal{F}^1\bigl(\Tot(\BC(\ov{X},\ov{d},\ov{D}))\bigr) \subseteq
\mathcal{F}^2\bigl(\Tot(\BC(\ov{X},\ov{d},\ov{D}))\bigr) \subseteq \cdots
$$
of the complex $\Tot(\BC(\ov{X},\ov{d},\ov{D}))$, given by
$$
\mathcal{F}^i\bigl(\Tot(\BC(\ov{X},\ov{d},\ov{D}))_n\bigr) = \bigoplus_{j\ge 0}
\mathcal{F}^{i-2j}(\ov{X}_{n-2j}) u^j,
$$
where $\mathcal{F}^l(\ov{X}_m) = \bigoplus_{0\le r\le l} \ov{X}_{r,m-r}$. An straightforward
computation shows that

\begin{itemize}

\smallskip

\item $\mathcal{E}^0_{rs} = \displaystyle{\bigoplus_{j\ge 0}} \Bigl(\mathcal{M}_{r-2j}\ot_k
\ov{H}^{\ot^s}\Bigr)u^j$ and $\delta^0_{rs}$ is  $\displaystyle{\bigoplus_{j\ge 0}}
\ov{d}^1_{r-2j,s}u^j$,

\smallskip

\item $\mathcal{E}^1_{rs} = \displaystyle{\bigoplus_{j\ge 0}} \Ho_s(H,\mathcal{M}_{r-2j})u^j$
and $\delta^1_{rs}$ is $\partial+\mathcal{D}$.

\end{itemize}
From this it is easy to see that $\wt{\Ho^K_s(H,E)}$ is a mixed complex and
$$
\mathcal{E}^2_{rs} = \HC_r\bigl(\wt{\Ho^K_s(H,E)}\bigr).
$$
In order to finish the proof it suffices to note that the filtration of
$\Tot(\BC(\ov{X},\ov{d},\ov{D}))$ introduced above is canonically bounded, and so, by
Theorem~\ref{th3.2}, the spectral sequence $(\mathcal{E}^v_{rs},\delta^v_{rs})_{v\ge 0}$
converges to the cyclic homology of the $K$-algebra $E$.
\end{proof}

\begin{corollary}\label{cor3.9} If $H$ is separable, then $\HC^K_n(E) = \HC_n\bigl(\wt{\Ho^K_0
(H,E)}\bigr)$.
\end{corollary}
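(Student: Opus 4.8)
The plan is to derive this from the second spectral sequence, in complete analogy with the way Corollary~\ref{cor3.5} is obtained from Proposition~\ref{prop3.4}. Recall that Proposition~\ref{prop3.8} provides a convergent spectral sequence $\mathcal{E}^2_{rs}=\HC_r\bigl(\wt{\Ho^K_s(H,E)}\bigr)\Rightarrow\HC^K_{r+s}(E)$, whose mixed complex $\wt{\Ho^K_s(H,E)}=\bigl(\Ho_s(H,\mathcal{M}_*),\partial_*,\mathcal{D}_*\bigr)$ has underlying groups the homologies $\Ho_s(H,\mathcal{M}_r)$ of the modules $\mathcal{M}_r=E\ot\ov{A}^{\ot^r}\ot$. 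Thus it suffices to show that separability of $H$ forces these groups to vanish for $s>0$, after which the spectral sequence will collapse onto the line $s=0$.

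First I would bring in the normalized integral: as already exploited in Proposition~\ref{prop3.6}, a separable $H$ carries an integral $t$ with $\epsilon(t)=1$. The $k$-linear map $k\to H$ sending $1$ to $t$ is then a section of the counit $\epsilon\colon H\to k$ in the category of $H$-modules, since $h\cdot t=\epsilon(h)t$ and $\epsilon(t)=1$. Hence the trivial module $k$ is a direct summand of the free module $H$, so $k$ is $H$-projective, and therefore $\Ho_s(H,M)=0$ for every coefficient module $M$ and every $s>0$ (these groups being the derived functors $\mathsf{Tor}^H_s(k,M)$ of the coinvariants). In particular $\Ho_s(H,\mathcal{M}_r)=0$ for all $r\ge 0$ and all $s>0$.

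Consequently $\wt{\Ho^K_s(H,E)}$ is the zero mixed complex whenever $s>0$, so $\mathcal{E}^1_{rs}=\mathcal{E}^2_{rs}=0$ for $s>0$ and the second page is concentrated on the line $s=0$. Because the filtration is canonically bounded and, for $v\ge 2$, every differential $\delta^v$ shifts the index $s$ by $v-1\ge 1$, no nonzero differential can touch the line $s=0$, either entering from $s<0$ or leaving towards $s\ge 1$; the sequence therefore degenerates at $\mathcal{E}^2$, giving $\mathcal{E}^2_{n,0}=\mathcal{E}^\infty_{n,0}$ and hence $\HC^K_n(E)=\HC_n\bigl(\wt{\Ho^K_0(H,E)}\bigr)$, which is the asserted identity. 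The only point requiring care is the identification of $\Ho_s(H,-)$ with $\mathsf{Tor}^H_s(k,-)$ together with the verification that $H$-projectivity of $k$ really annihilates the positive-degree homology of the specific modules $\mathcal{M}_r$; once this is granted the rest is the formal degeneration of a bounded spectral sequence, so I expect no computational obstacle and a correspondingly short proof.
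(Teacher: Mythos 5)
Your proposal is correct and is precisely the argument the paper intends: Corollary~\ref{cor3.9} is stated without proof immediately after Proposition~\ref{prop3.8}, in exact parallel with Corollary~\ref{cor3.5} after Proposition~\ref{prop3.4}, and the intended proof is the one you give — separability yields the normalized integral $t$ (as in Proposition~\ref{prop3.6}), hence $k$ is a direct summand of $H$ as an $H$-module and $\Ho_s(H,\mathcal{M}_r)=0$ for $s>0$, so the spectral sequence of Proposition~\ref{prop3.8} is concentrated on the line $s=0$ and degenerates at $\mathcal{E}^2$, giving $\HC^K_n(E)=\HC_n\bigl(\wt{\Ho^K_0(H,E)}\bigr)$.
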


\section{Some decompositions of the mixed complexes}
Let $[H,H]$ be the $k$-submodule of $H$ spanned by the set of all elements $hl-lh$ with $h,l\in
H$. It is easy to see that $[H,H]$ is a coideal in $H$. Let $\breve H$ be the quotient
coalgebra $H/[H,H]$. In this section we study decompositions of the mixed complexes
$\bigl(E\ot\ov{E}^{\ot^*}\ot,b,B\bigr)$, $\bigl(\wh{X},\wh{d},\wh{D}\bigr)$ and
$\bigl(\ov{X},\ov{d},\ov{D}\bigr)$ induced by decompositions of $\breve{H}$.

For $h\in H$, we let $\ov{h}$ denote the class of $h$ in $\breve H$. Given a subcoalgebra $C$
of $\breve H$ and a right $\breve H$-comodule $(N,\rho)$, we set $N^C = \{n\in N\:\rho(n)\in
N\ot C\}$. It is well known that if $\breve H$ decomposes as a direct sum of a family
$(C_i)_{i\in I}$ of subcoalgebras, then $N= \bigoplus_{i\in I} N^{C_i}$.

\smallskip

For each $n$, the module $E\ot\ov{E}^{\ot^n}\ot$ is an $\breve H$-comodule via
$$
\rho_n\bigl(\bigl[a_0\gamma(h_0)\ot\cdots\ot a_n\gamma(h_n)\bigr]\bigr) = \Bigl[a_0
\gamma(h_0^{(1)})\ot\cdots\ot a_n\gamma(h_n^{(1)})\Bigr]\ot_k \ov{h_0^{(2)}\cdots h_n^{(2)}},
$$
and the map $\rho_*\colon E\ot\ov{E}^{\ot^*}\ot\to \bigl(E\ot\ov{E}^{\ot^*}\ot\bigr)\ot_k
\breve H$ is a morphism of mixed complexes. This last fact implies that if $C$ is a
subcoalgebra of $\breve H$, then
$$
b\bigl(E\ot\ov{E}^{\ot^n}\ot^C\bigr) \subseteq E\ot\ov{E}^{\ot^{n-1}}\ot^C\quad\text{and}\quad
B\bigl(E\ot\ov{E}^{\ot^n}\ot^C\bigr)\subseteq E\ot\ov{E}^{\ot^{n+1}}\ot^C.
$$
Let $\bigl(E\ot\ov{E}^{\ot^*}\ot^C,b^C,B^C\bigr)$ be the mixed subcomplex of
$\bigl(E\ot\ov{E}^{\ot^*}\ot,b,B\bigr)$, with modules $E\ot\ov{E}^{\ot^n}\ot^C$. We let
$\HH_*^{K,C}(E)$, $\HC_*^{K,C}(E)$, $\HP_*^{K,C}(E)$ and $\HN_*^{K,C}(E)$ denote its
Hochschild, cyclic, periodic and negative homology, respectively.

\smallskip

Similarly, for each $n\ge 0$, the module $\wh{X}_n$ is an $\breve H$-comodule via
$$
\rho_n\bigl(\bigl[a_0\gamma(h_0)\ot_A \ov{\gamma(\bh_{1s})}\ot\ba_{1,n-s}\bigr]\bigr)\! = \!
\Bigl[a_0\gamma(h_0^{(1)})\ot_A \ov{\gamma(\bh_{1s}^{(1)} )} \ot \ba_{1,n-s}\Bigr]\! \ot \!
\ov{h_0^{(2)}\dots h_s^{(2)}},
$$
and the map $\rho_*\colon \wh{X}_* \to \wh{X}_*\ot \breve H$ is a morphism of mixed complexes.
Consequently, if $C$ is a subcoalgebra of $\breve H$, then
$$
\wh{d}_n(\wh{X}_n^C) \subseteq\wh{X}_{n-1}^C\quad\text{and}\quad \wh{D}_n(\wh{X}_n^C) \subseteq
\wh{X}_{n+1}^C.
$$
Let $\bigl(\wh{X}^C,\wh{d}^C,\wh{D}^C\bigr)$ be the mixed subcomplex of
$\bigl(\wh{X},\wh{d},\wh{D}\bigr)$ with modules $\wh{X}_n^C$. The homotopy equivalent data
introduced in Theorem~2.4 induces by restriction a homotopy equivalent data between
$\bigl(\wh{X}^C,\wh{d}^C,\wh{D}^C\bigr)$ and $\bigl(E\ot\ov{E}^{\ot^*}\ot^C,b^C,B^C\bigr)$. So,
$\HH_*^{K,C}(E)$, $\HC_*^{K,C}(E)$, $\HP_*^{K,C}(E)$ and $\HN_*^{K,C}(E)$ are the Hochschild,
cyclic, periodic and negative homology of $\bigl(\wh{X}^C,\wh{d}^C,\wh{D}^C\bigr)$,
respectively.

\smallskip

Suppose now the cocycle $f$ is invertible. A direct computation shows that the $\breve
H$-coaction of $(\ov{X},\ov{d},\ov{D})$, obtained by transporting the one of
$(\wh{X},\wh{d},\wh{D})$ through $\theta\colon(\wh{X},\wh{d},\wh{D})\to
(\ov{X},\ov{d},\ov{D})$, is given by
$$
[a_0\gamma(h_0)\ot\ba_{1r}]\!\ot_k\! \bh_{1s} \mapsto \bigl[a_0h_0^{(1)}\ot\ba_{1r}\bigr]
\!\ot_k\!\bh_{1s}^{(2)} \ot h_0^{(2)}S(h_1^{(1)}\cdots h_s^{(1)})h_1^{(3)}\cdots h_s^{(3)}.
$$
This implies that if if $\breve H$ is cocommutative, then
$$
\ov{X}_n^C = \bigoplus_{r+s = n} \ov{X}_{rs}^C = \bigoplus_{r+s = n} E^C\ot \ov{A}^{\ot^r} \ot
\ov{H}^{\ot^s}.
$$
For each subcoalgebra $C$ of $\breve H$, we consider the mixed subcomplex $\bigl(\ov{X}^C,
\ov{d}^C,\ov{D}^C\bigr)$ of $\bigl(\ov{X},\ov{d},\ov{D}\bigr)$ with modules $\ov{X}^C_n$. It is
clear that $\theta$ induces an isomorphism
$$
\theta^C\colon \bigl(\wh{X}^C,\wh{d}^C,\wh{D}^C\bigr)\to \bigl(\ov{X}^C,\ov{d}^C,\ov{D}^C).
$$
So, $\HH_*^{K,C}(E)$, $\HC_*^{K,C}(E)$, $\HP_*^{K,C}(E)$ and $\HN_*^{K,C}(E)$ are the
Hochschild, cyclic, periodic and negative homology of $\bigl(\ov{X}^C,\ov{d}^C,\ov{D}^C\bigr)$,
respectively.

\smallskip

By the discussion at the beginning of this subsection, if $\breve H$ decomposes as a direct sum
of a family $(C_i)_{i\in I}$ of subcoalgebras, then
\begin{align*}
& \bigl(E\ot\ov{E}^{\ot^*}\ot,b,B\bigr) = \bigoplus_{i\in I}
\bigl(E\ot\ov{E}^{\ot^*}\ot^{C_i},b^{C_i},B^{C_i}\bigr)\\
& \bigl(\wh{X},\wh{d},\wh{D}\bigr) = \bigoplus_{i\in I} \bigl(\wh{X}^{C_i},\wh{d}^{C_i},
\wh{D}^{C_i}\bigr)
\intertext{and}
& \bigl(\ov{X},\ov{d},\ov{D}\bigr) = \bigoplus_{i\in I} \bigl(\ov{X}^{C_i},\ov{d}^{C_i},
\ov{D}^{C_i}\bigr).
\end{align*}
In particular $\HH^K_*(E) = \bigoplus_{i\in I} \HH_*^{K,C_i}(E)$, etcetera.

\smallskip

In the sequel we use the notations introduced in Subsection~3.1 and 3.2.

\begin{lemma} Assume that $\breve{H}$ is cocommutative and $\ov{H}$ is a flat
$k$-module. If $C$ is a subcoalgebra of $\breve{H}$, then for each $r,s\ge 0$,
\begin{align*}
&\wt{d}\bigl(\Ho^K_r(A,E^C)\ot_k \ov{H}^{\ot^n}\bigr)\subseteq \Ho^K_r(A,E^C)\ot_k
\ov{H}^{\ot^{n-1}}
\intertext{and}
&\wt{D}\bigl(\Ho^K_r(A,E^C)\ot_k \ov{H}^{\ot^n}\bigr)\subseteq \Ho^K_r(A,E^C)\ot_k
\ov{H}^{\ot^{n+1}}.
\end{align*}
\end{lemma}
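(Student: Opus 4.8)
The plan is to show that the two operators $\wt{d}$ and $\wt{D}$, which are induced on the $E^1$-page $\Ho^K_r(A,E)\ot_k\ov{H}^{\ot^*}$ of the first spectral sequence, respect the decomposition coming from the $\breve{H}$-coaction. The key observation is that the claimed inclusions are statements about a single summand $\Ho^K_r(A,E^C)$ of the decomposition $\Ho^K_r(A,E)=\bigoplus_{i\in I}\Ho^K_r(A,E^{C_i})$, so it suffices to check that $\wt{d}$ and $\wt{D}$ are morphisms of $\breve{H}$-comodules with respect to the natural coaction on $\Ho^K_r(A,E)\ot_k\ov{H}^{\ot^*}$. Since subcomodules split off along subcoalgebras, comodule-compatibility of an operator immediately yields that it preserves each $\Ho^K_r(A,E^C)\ot_k\ov{H}^{\ot^*}$, which is precisely the assertion.

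First I would make explicit the relevant coaction. Because $\breve{H}$ is assumed cocommutative, the coaction on $\ov{X}$ transported through $\theta$ (displayed just before this lemma) restricts on the $r=0$ part to the coaction $[a_0\gamma(h_0)\ot\ba_{1r}]\mapsto[a_0\gamma(h_0^{(1)})\ot\ba_{1r}]\ot\ov{h_0^{(2)}}$ on $E\ot\ov{A}^{\ot^r}\ot=\mathcal{M}_r$, and this descends to a $\breve{H}$-coaction on the homology $\Ho^K_r(A,E)$ (the Hochschild boundary $\ov{d}^0$ is a comodule map, as it does not touch the $H$-degrees contributing to $\breve{H}$). Equipping $\Ho^K_r(A,E)\ot_k\ov{H}^{\ot^s}$ with the tensor-product coaction, one has $\Ho^K_r(A,E^C)\ot_k\ov{H}^{\ot^s}=\bigl(\Ho^K_r(A,E)\ot_k\ov{H}^{\ot^s}\bigr)^{C}$ precisely because the $\ov{H}$-factors are invisible to the $\breve{H}$-coaction under cocommutativity (the transported coaction collapses on the mixed-degree variables). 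The flatness of $\ov{H}$ is used here so that taking the $C$-isotypic part commutes with tensoring by $\ov{H}^{\ot^s}$.

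Next I would verify directly that $\wt{d}$ and $\wt{D}$ are comodule maps. The operator $\wt{d}$ is induced by $\ov{d}^1_{rs}$, and $\wt{D}$ by $\sum_{j}(-1)^{js+r}\ov{t}_H^j\xcirc\eta_{r+s}$; both are already morphisms of mixed complexes commuting with $\rho_*$ at the chain level (this was established in Section~4 when showing $\wh{d}$ and $\wh{D}$ preserve $\wh{X}^C_n$, and transported through $\theta$). Passing to $E^1$-homology, comodule-compatibility survives because $\rho_*$ is a morphism of complexes, so the induced maps on $\Ho^K_r(A,E)\ot_k\ov{H}^{\ot^*}$ are $\breve{H}$-comodule morphisms. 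Applying the splitting $N=\bigoplus_i N^{C_i}$ for comodules to these maps yields the two desired inclusions.

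\textbf{The main obstacle} I expect is the bookkeeping in the second step: one must confirm that the transported coaction really does leave the $\ov{H}^{\ot^s}$ tensor factors untouched, so that $(\,-\,)^{C}$ and $-\ot_k\ov{H}^{\ot^s}$ genuinely commute and give $\Ho^K_r(A,E^C)\ot_k\ov{H}^{\ot^s}$ rather than some twisted diagonal subobject. This is exactly where cocommutativity of $\breve{H}$ enters — without it the transported coaction (displayed above) mixes $h_0$ with the $h_1,\dots,h_s$ through the antipode factor $h_0^{(2)}S(h_1^{(1)}\cdots h_s^{(1)})h_1^{(3)}\cdots h_s^{(3)}$, and the $\ov{H}$-factors would no longer be coaction-transparent. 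Once this compatibility of the coaction with the decomposition is nailed down, the remainder is a formal consequence of $\wt{d}$ and $\wt{D}$ being comodule morphisms.
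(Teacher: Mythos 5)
Your formal skeleton is the natural one (and surely the intended one): the transported coaction displayed just before the lemma preserves the bidegree $(r,s)$, the maps $\ov{d}^0$, $\ov{d}^1$ and the $s$-raising component of $\ov{D}$ are therefore morphisms of $\breve H$-comodules, and one then wants to pass to homology and take $C$-parts. One correction to the formal part: you neither need nor have that ``subcomodules split off along subcoalgebras'' for a general subcoalgebra $C$; the fact you actually use is that any morphism of comodules $\phi\colon N\to M$ satisfies $\phi(N^C)\subseteq M^C$, which is immediate from $\rho_M\circ\phi=(\phi\ot\ide)\circ\rho_N$ and requires no splitting.

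The genuine gap is the step you yourself flag as the main obstacle and then assert rather than prove: that cocommutativity of $\breve H$ makes the transported coaction ``collapse'', leaving the $\ov{H}^{\ot s}$-factors untouched, so that $\bigl(\Ho^K_r(A,E)\ot_k\ov{H}^{\ot s}\bigr)^C=\Ho^K_r(A,E^C)\ot_k\ov{H}^{\ot s}$. Written out, the collapse is the identity
\begin{equation*}
h^{(2)}\ot \ov{\,y\,S(h^{(1)})\,h^{(3)}\,}\;=\;h\ot\ov{y}\qquad\text{in }H\ot_k\breve H ,
\end{equation*}
and cocommutativity of $\breve H$ does not yield it: it only gives $\ov{h^{(1)}}\ot\ov{h^{(2)}}=\ov{h^{(2)}}\ot\ov{h^{(1)}}$ after \emph{both} legs are projected to $\breve H$, whereas here the leg $h^{(2)}$ stays in $H$ (it sits in an $\ov{H}$-factor), so it cannot be exchanged with $h^{(3)}$, and the trace property $\ov{ab}=\ov{ba}$ only helps when both factors lie inside a single class. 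In fact the identity fails under the stated hypotheses. Take $H$ to be Sweedler's four-dimensional Hopf algebra over a field with $\cha k\neq 2$ ($g^2=1$, $x^2=0$, $xg=-gx$, $\De(g)=g\ot g$, $\De(x)=x\ot 1+g\ot x$, $S(g)=g$, $S(x)=-gx$). Then $[H,H]=\mathrm{span}(x,gx)$, so $\breve H=k\ov{1}\oplus k\ov{g}$ is cocommutative; but from $\De^2(x)=x\ot 1\ot 1+g\ot x\ot 1+g\ot g\ot x$ one gets $x^{(2)}\ot\ov{S(x^{(1)})x^{(3)}}=x\ot\ov{g}\neq x\ot\ov{1}$, since $\ov{S(x)}=\ov{gx}=0$. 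This is not a repairable omission: taking $A=K=k$ with trivial action and cocycle (so $E=H$, $\Ho^K_0(A,E)=H$, higher homology zero, and $\wt{d}_1=\ov{d}^1_{01}$), and $C=k\ov{1}$, one has $E^C=\mathrm{span}(1,x)$, while
\begin{equation*}
\wt{d}\bigl(1\ot x\bigr)=\ep(x)1-x^{(2)}S(x^{(1)})=2gx\notin E^C ,
\end{equation*}
so the very inclusion to be proved fails here. Consequently your argument cannot be completed from $\breve H$-cocommutativity alone: the collapse (and with it your proof) does go through by a two-line Sweedler computation when $H$ itself is cocommutative (swap $h^{(2)}$ with $h^{(3)}$ and use $S(h^{(1)})h^{(2)}=\ep(h)1$), and alternatively the statement becomes formally true if one reads the modules in the lemma as the $C$-parts of $\Ho^K_r(A,E)\ot_k\ov{H}^{\ot n}$ with respect to the transported coaction rather than as $\Ho^K_r(A,E^C)\ot_k\ov{H}^{\ot n}$.
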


\begin{proof} Left to the reader.
\end{proof}

\begin{proposition} Assume that $\breve{H}$ is cocommutative and $\ov{H}$ is a flat
$k$-module. Let $C$ be a subcoalgebra of $\breve{H}$ and let
$$
\wt{\Ho^K_r(A,E^C)} = \bigl(\Ho^K_r(A,E^C)\ot_k \ov{H}^{\ot^*},\wt{d}_*^C,\wt{D}_*^C\bigr)
$$
be the submixed complex of $\wt{\Ho^K_r(A,E)}$ with modules $\Ho^K_r(A,E^C)\ot_k
\ov{H}^{\ot^n}$. There is a convergent spectral sequence
$$
E^2_{sr} = \HC_s\bigl(\wt{\Ho^K_r(A,E^C)}\bigr) \Rightarrow \HC^{K,C}_{r+s}(E).
$$
\end{proposition}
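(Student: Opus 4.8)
The plan is to run the spectral-sequence argument of Proposition~\ref{prop3.4} essentially verbatim, but with the mixed complex $\bigl(\ov{X},\ov{d},\ov{D}\bigr)$ replaced by its mixed subcomplex $\bigl(\ov{X}^C,\ov{d}^C,\ov{D}^C\bigr)$. By the discussion in Section~4 this subcomplex computes $\HH^{K,C}_*(E)$, $\HC^{K,C}_*(E)$, and so forth, so its cyclic homology is exactly the abutment we want. Cocommutativity of $\breve{H}$ supplies the bigraded identification $\ov{X}^C_{rs} = E^C\ot\ov{A}^{\ot^r}\ot\ov{H}^{\ot^s}$ recorded in Section~4, and a short check using that $E$ is an $H$-comodule algebra (so that $\nu(ax)=(a\ot 1_H)\nu(x)$ and $\nu(xa)=\nu(x)(a\ot 1_H)$ for $a\in A$) shows that $E^C$ is an $A$-subbimodule of $E$; hence $\Ho^K_r(A,E^C)$ is defined. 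The Lemma preceding this Proposition then guarantees that $\wt{\Ho^K_r(A,E^C)}$ is a mixed subcomplex of $\wt{\Ho^K_r(A,E)}$, so all the objects in the statement make sense.

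First I would filter $\Tot\bigl(\BC(\ov{X}^C,\ov{d}^C,\ov{D}^C)\bigr)$ exactly as in Proposition~\ref{prop3.4}, by setting
$$
F^i\bigl(\Tot(\BC(\ov{X}^C,\ov{d}^C,\ov{D}^C))_n\bigr) = \bigoplus_{j\ge 0} F^{i-2j}(\ov{X}^C_{n-2j})u^j,\qquad F^l(\ov{X}^C_m)=\bigoplus_{0\le s\le l}\ov{X}^C_{m-s,s}.
$$
This is simply the restriction to the $C$-component of the filtration used there, which is legitimate because $\ov{d}^C$ and $\ov{D}^C$ are the restrictions of $\ov{d}$ and $\ov{D}$ and hence preserve it. The components $\ov{d}^l$ with $l\ge 1$, together with the Connes operator $\ov{D}$, all strictly decrease the filtration index, so only $\ov{d}^0$ contributes to the associated graded differential $d^0$; using the identification $\ov{X}^C_{rs}=E^C\ot\ov{A}^{\ot^r}\ot\ov{H}^{\ot^s}$ this is the normalized Hochschild differential of the $K$-algebra $A$ with coefficients in the $A$-bimodule $E^C$, tensored with $\ide_{\ov{H}^{\ot^s}}$. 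Therefore I would obtain, paralleling Proposition~\ref{prop3.4},
$$
E^0_{sr}=\bigoplus_{j\ge 0}\bigl(E^C\ot\ov{A}^{\ot^r}\ot\ov{H}^{\ot^{s-2j}}\bigr)u^j,\qquad E^1_{sr}=\bigoplus_{j\ge 0}\bigl(\Ho^K_r(A,E^C)\ot_k\ov{H}^{\ot^{s-2j}}\bigr)u^j,
$$
with $d^1$ identified with $\wt{d}^C+\wt{D}^C$ by the same computation as there. Passing to homology in the $u$-direction yields $E^2_{sr}=\HC_s\bigl(\wt{\Ho^K_r(A,E^C)}\bigr)$.

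Finally I would note that the filtration is canonically bounded in each total degree: one has $F^l(\ov{X}^C_m)=0$ for $l<0$ and $F^l(\ov{X}^C_m)=\ov{X}^C_m$ for $l\ge m$, whence $F^{-1}=0$ and $F^i=\Tot(\BC(\ov{X}^C,\ov{d}^C,\ov{D}^C))_n$ for $i\ge n$. Convergence follows, and the abutment is the cyclic homology of $\Tot\bigl(\BC(\ov{X}^C,\ov{d}^C,\ov{D}^C)\bigr)$, which is $\HC^{K,C}_{r+s}(E)$ by Section~4. The step I expect to require the most care is not the spectral-sequence bookkeeping, which is identical to the unrestricted case, but rather the verification that passage to the $C$-component is compatible with every ingredient—that $E^C$ is genuinely an $A$-subbimodule and that the cocommutativity decomposition $\ov{X}^C_{rs}=E^C\ot\ov{A}^{\ot^r}\ot\ov{H}^{\ot^s}$ makes the restricted $\ov{d}^0$ compute $\Ho^K_r(A,E^C)$; once these compatibilities are in place, the remainder is a direct transcription of the proof of Proposition~\ref{prop3.4}.
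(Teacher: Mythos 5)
Your proposal is correct and is precisely the argument the paper intends: the paper's own ``proof'' is left to the reader, and the expected solution is exactly what you do—restrict the filtration and spectral-sequence computation of Proposition~\ref{prop3.4} to the $C$-components $\bigl(\ov{X}^C,\ov{d}^C,\ov{D}^C\bigr)$, using the Section~4 identifications $\ov{X}^C_{rs}=E^C\ot\ov{A}^{\ot^r}\ot\ov{H}^{\ot^s}$ and the fact that this subcomplex computes $\HC^{K,C}_*(E)$. Your added checks (that $E^C$ is an $A$-subbimodule so $\Ho^K_r(A,E^C)$ makes sense, and that the filtration remains canonically bounded) are exactly the compatibilities needed, so nothing is missing.
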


\begin{proof} Left to the reader.
\end{proof}

\begin{lemma} Assume that $\breve{H}$ is cocommutative. If $C$ is a subcoalgebra of
$\breve{H}$, then $\mathcal{M}_n^C = E^C\ot\ov{A}^{\ot^n}\ot$ is an $H$-submodule of
$\mathcal{M}_n$ for each $n\ge 0$. Moreover
$$
\partial\bigl(\Ho_s(H,\mathcal{M}_n)\bigr)\subseteq \Ho_s(H,\mathcal{M}_{n-1}) \quad \text{and}\quad
\mathcal{D}\bigl(\Ho_s(H,\mathcal{M}_n)\bigr)\subseteq \Ho_s(H,\mathcal{M}_{n+1}).
$$
\end{lemma}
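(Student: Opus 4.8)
The plan is to prove the two assertions in turn, deducing the statement about $\partial$ and $\mathcal{D}$ (read with the superscript $C$, exactly as in the analogous lemma above) formally from the fact that $\mathcal{M}_n^C$ is $\blacktriangleright$-stable.

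First I would reduce the module statement to a grade–preservation property. The $\breve{H}$–coaction on $\mathcal{M}_n = \ov{X}_{n0}$ is carried entirely by the $E$–factor: the entries of $\ba_{1n}$ lie in $A$ and hence are coinvariant, so $\mathcal{M}_n^C = E^C\ot\ov{A}^{\ot^n}\ot$ is precisely the $C$–homogeneous part of $\mathcal{M}_n$. The action $\blacktriangleright$ operates on the $\ba$–factor through the weak action, which keeps its entries inside $A$ (hence coinvariant), and on the $E$–factor by the conjugation $x\mapsto \gamma(h^{(3)})x\gamma^{-1}(h^{(1)})$ with the middle leg $h^{(2)}$ spent on the $\ba$'s. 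Thus it suffices to show that $E^C$ is stable under $c_h\colon x\mapsto \gamma(h^{(2)})x\gamma^{-1}(h^{(1)})$.

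Next I would compute how $c_h$ moves the $\breve{H}$–grade. Since $E$ is a right $H$–comodule algebra under $\nu = \ide\ot\Delta$, with $\nu(\gamma(h)) = \gamma(h^{(1)})\ot h^{(2)}$ and $\nu(\gamma^{-1}(h)) = \gamma^{-1}(h^{(2)})\ot S(h^{(1)})$, multiplicativity of $\nu$ gives that the $H$–degree of $c_h(x)$ is the adjoint twist $h^{(4)}\,x^{(1)}\,S(h^{(1)})$ of the degree $x^{(1)}$ of $x$, where $\Delta^{(3)}(h) = h^{(1)}\ot h^{(2)}\ot h^{(3)}\ot h^{(4)}$ and the inner legs $h^{(2)},h^{(3)}$ are spent on the $E$–factor $\gamma(h^{(3)})x^{(0)}\gamma^{-1}(h^{(2)})$ (for $\mathcal{M}_n$ there is one further inner leg, spent on the $\ba$'s). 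Projecting to $\breve{H}$ and using the trace identity $\ov{ab}=\ov{ba}$ in $\breve{H}=H/[H,H]$, the antipode axiom $S(h^{(1)})h^{(2)}=\epsilon(h)$, and cocommutativity of $\breve{H}$, this twist collapses to $\epsilon(h)\,\ov{x^{(1)}}$; that is, $c_h$ preserves the $\breve{H}$–grade and so carries the $C$–homogeneous part $E^C$ into itself. The delicate point — and the main obstacle — is precisely this collapse: because the two legs $S(h^{(1)})$ and $h^{(4)}$ carrying the twist are separated by the inner legs that sit on the other tensor factors, the antipode cannot be applied directly, and it is only after passing to $\breve{H}$ and invoking cocommutativity that the antipode pair can be brought together and annihilated. (This grade–preservation is the same phenomenon underlying the already–recorded identity $\ov{X}^C_{rs}=E^C\ot\ov{A}^{\ot^r}\ot\ov{H}^{\ot^s}$.)

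Finally, the statement about $\partial$ and $\mathcal{D}$ follows formally. Once $\mathcal{M}_n^C$ is an $H$–submodule, the bar complex computing $\Ho_*(H,\mathcal{M}_n^C)$ is a subcomplex of the one computing $\Ho_*(H,\mathcal{M}_n)$, so $\Ho_s(H,\mathcal{M}_n^C)$ maps to $\Ho_s(H,\mathcal{M}_n)$. The maps $\ov{d}^0_{ns}$ and $\mathcal{B}_{ns}$ inducing $\partial$ and $\mathcal{D}$ are morphisms of $\breve{H}$–comodules — this is the content of $\rho$ being a morphism of mixed complexes on $\ov{X}$ — so they preserve $C$–homogeneous components, and by Proposition~\ref{prop3.8} they induce $\partial$ and $\mathcal{D}$ on $H$–homology. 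Hence they restrict to $\mathcal{M}_*^C$ and give $\partial\bigl(\Ho_s(H,\mathcal{M}_n^C)\bigr)\subseteq \Ho_s(H,\mathcal{M}_{n-1}^C)$ and $\mathcal{D}\bigl(\Ho_s(H,\mathcal{M}_n^C)\bigr)\subseteq \Ho_s(H,\mathcal{M}_{n+1}^C)$, as required.
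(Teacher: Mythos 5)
Your reduction of the first assertion to grade--preservation, and your computation that the $\breve{H}$--degree of $c_h(y)$ is $\ov{h^{(4)}y^{(1)}S(h^{(1)})}$ with the inner legs $h^{(2)},h^{(3)}$ (and the weak--action leg) spent on the other tensor factors, are both correct. The gap is exactly the step you flag as the delicate point: the claim that this twist collapses to $\epsilon(h)\,\ov{y^{(1)}}$ ``after passing to $\breve{H}$ and invoking cocommutativity.'' Cocommutativity of $\breve{H}$ is a symmetry of $\Delta_{\breve{H}}=(\pi\ot\pi)\circ\Delta_H$, where $\pi\colon H\to\breve{H}$ is the projection; it permits permuting legs of a comultiplication only when \emph{all} the legs involved have been projected into $\breve{H}$. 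Here the legs $h^{(2)},h^{(3)}$ remain in $H$, attached to the $E$--factor and to the $\ba$'s, so neither the trace identity $\ov{ab}=\ov{ba}$ (which only rearranges products inside the single $\breve{H}$--slot) nor cocommutativity of $\breve{H}$ lets you bring $S(h^{(1)})$ next to $h^{(4)}$ and annihilate the pair. That manipulation is legitimate precisely when $H$ itself is cocommutative, which is a strictly stronger hypothesis.

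The step is not merely unjustified; it is false, and the grade--preservation fails with it. Take $H$ to be Sweedler's four--dimensional Hopf algebra over a field with $\cha(k)\neq 2$: generators $g,x$ with $g^2=1$, $x^2=0$, $xg=-gx$, $\Delta(g)=g\ot g$, $\Delta(x)=x\ot 1+g\ot x$, $S(g)=g$, $S(x)=-gx$; take $A=K=k$ with the (forced) trivial weak action and trivial cocycle, so $E=H$, $\gamma=\ide$, $\gamma^{-1}=S$, $\mathcal{M}_0=E$, and $h\blacktriangleright y=h^{(2)}yS(h^{(1)})$. Then $[H,H]=kx+kgx$, so $\breve{H}=k\ov{1}\oplus k\ov{g}$ is cocommutative, $C=k\ov{1}$ is a subcoalgebra, and $\mathcal{M}_0^C=E^C=k1\oplus kx$. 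But
$$
x\blacktriangleright 1=x^{(2)}S(x^{(1)})=S(x)+xS(g)=-gx+xg=-2gx,
$$
and $\rho(gx)=gx\ot\ov{g}$, so $x\blacktriangleright 1$ lies in $E^{k\ov{g}}$ and not in $\mathcal{M}_0^C$: conjugation does not preserve the $\breve{H}$--grade, and $\mathcal{M}_0^C$ is not an $H$--submodule. So your argument cannot be repaired locally: with only $\breve{H}$ cocommutative the first assertion itself fails, and a correct proof is possible only under the stronger hypothesis that $H$ is cocommutative (where your leg--permutation is performed in $H$ before projecting, and does go through). Note also that the identity $\ov{X}^C_{rs}=E^C\ot\ov{A}^{\ot^r}\ot\ov{H}^{\ot^s}$ you cite as a precedent is asserted in the paper without proof and is defeated by the same example (e.g.\ $\rho([1]\ot x)=[1]\ot x\ot\ov{g}$ in $\ov{X}_{01}$), so it cannot serve as corroboration; and since the paper leaves the proof of the present lemma to the reader, there is no argument there against which your proposal could be salvaged by comparison.
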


\begin{proof} Left to the reader.
\end{proof}

\begin{proposition} Assume that $\breve{H}$ is cocommutative. Let $C$ be a subcoalgebra of
$\breve{H}$ and let
$$
\wt{\Ho^K_s(H,E^C)} = \bigl(\Ho_s(H,\mathcal{M}^C_*),\partial_*,\mathcal{D}_*\bigr)
$$
be the submixed complex of $\wt{\Ho^K_s(H,E)}$ with modules $\Ho_s(H,\mathcal{M}^C_n)$. There
is a convergent spectral sequence
$$
\mathcal{E}^2_{rs} = \HC_r\bigl(\wt{\Ho^K_s(H,E^C)}\bigr) \Rightarrow \HC^{K,C}_{r+s}(E).
$$
\end{proposition}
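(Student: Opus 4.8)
The plan is to run, component by component, the very argument that proves Proposition~\ref{prop3.8}. By the discussion of Section~4 the mixed complex $\bigl(\ov{X}^C,\ov{d}^C,\ov{D}^C\bigr)$ is a direct summand of $\bigl(\ov{X},\ov{d},\ov{D}\bigr)$, and its cyclic homology is $\HC^{K,C}_*(E)$; moreover, since $\breve{H}$ is cocommutative, Section~4 provides the explicit description $\ov{X}^C_{rs} = E^C\ot\ov{A}^{\ot^r}\ot\ov{H}^{\ot^s}$, so that $\mathcal{M}^C_r = E^C\ot\ov{A}^{\ot^r}\ot$ and the $C$-component carries the same $(r,s)$-bigrading as $\ov{X}$. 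First I would consider $\Tot\bigl(\BC(\ov{X}^C,\ov{d}^C,\ov{D}^C)\bigr)$ and filter it exactly as in Proposition~\ref{prop3.8}, by
$$
\mathcal{F}^i\bigl(\Tot(\BC(\ov{X}^C,\ov{d}^C,\ov{D}^C))_n\bigr) = \bigoplus_{j\ge 0}\mathcal{F}^{i-2j}(\ov{X}^C_{n-2j})u^j,\qquad \mathcal{F}^l(\ov{X}^C_m) = \bigoplus_{0\le r\le l}\ov{X}^C_{r,m-r}.
$$
The preceding Lemma, asserting that $\mathcal{M}^C_n$ is an $H$-submodule of $\mathcal{M}_n$ and that $\partial$ and $\mathcal{D}$ restrict to it, is precisely what guarantees that this filtration is well defined on the $C$-component.

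Next I would compute the low pages of the associated spectral sequence $(\mathcal{E}^v_{rs},\delta^v_{rs})_{v\ge 0}$. Because the restriction to the $C$-component respects the bigrading, the same verification as in Proposition~\ref{prop3.8} yields
$$
\mathcal{E}^0_{rs} = \bigoplus_{j\ge 0}\bigl(\mathcal{M}^C_{r-2j}\ot_k\ov{H}^{\ot^s}\bigr)u^j,\qquad \delta^0_{rs} = \bigoplus_{j\ge 0}\ov{d}^1_{r-2j,s}u^j,
$$
whence $\mathcal{E}^1_{rs} = \bigoplus_{j\ge 0}\Ho_s(H,\mathcal{M}^C_{r-2j})u^j$ with $\delta^1_{rs} = \partial+\mathcal{D}$. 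This exhibits $\wt{\Ho^K_s(H,E^C)}$ as a mixed complex and identifies $\mathcal{E}^2_{rs} = \HC_r\bigl(\wt{\Ho^K_s(H,E^C)}\bigr)$. Finally, the filtration is canonically bounded, so the spectral sequence converges to the homology of $\Tot\bigl(\BC(\ov{X}^C,\ov{d}^C,\ov{D}^C)\bigr)$, which by Section~4 is $\HC^{K,C}_{r+s}(E)$.

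Since the whole proof is nothing but the $C$-graded restriction of Proposition~\ref{prop3.8}, there is no genuinely new computation; the only point that needs care, and the one I expect to be the main (though routine) obstacle, is that passing to the $C$-component commutes with forming the spectral sequence. This compatibility rests entirely on the cocommutativity of $\breve{H}$, through which the decomposition $\ov{X} = \bigoplus_i \ov{X}^{C_i}$ splits simultaneously the bigrading (giving $\ov{X}^C_{rs} = E^C\ot\ov{A}^{\ot^r}\ot\ov{H}^{\ot^s}$), the differentials $\ov{d}^0$ and $\ov{d}^1$, and the Connes operator $\ov{D}$. Once that splitting is recorded, all the estimates of Proposition~\ref{prop3.8} transfer verbatim, and the result follows.
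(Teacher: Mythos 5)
Your proof is correct and follows exactly the argument the paper intends (its own proof is ``left to the reader''): restrict the filtration, page computations, and convergence argument of Proposition~\ref{prop3.8} to the $C$-component, using the preceding Lemma and the cocommutativity of $\breve{H}$ to ensure everything respects the bigrading, and the Section~4 identification of the cyclic homology of $\bigl(\ov{X}^C,\ov{d}^C,\ov{D}^C\bigr)$ with $\HC^{K,C}_*(E)$. One cosmetic remark: for a single subcoalgebra $C$ of $\breve{H}$ the complex $\bigl(\ov{X}^C,\ov{d}^C,\ov{D}^C\bigr)$ is only a mixed subcomplex, not necessarily a direct summand (that requires $C$ to be part of a decomposition of $\breve{H}$), but your argument never actually uses the summand property---only the identification of its cyclic homology, which Section~4 provides via the restricted homotopy equivalence data.
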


\begin{proof} Left to the reader.
\end{proof}

\appendix
\section{}
This appendix is devoted to prove Propositions~\ref{prop2.1}, \ref{prop2.2} and~\ref{prop2.5}.

\begin{lemma}\label{leA.1} We have
$$
\ov{\sigma}_{n+1} = -  \sigma_{0,n+1}^0 \xcirc \sigma_{n+1}^{-1} \xcirc \mu_n + \sum_{r=0}^n
\sum_{l=0}^{n-r} \sigma_{r+l+1,n-r-l}^l,
$$
\end{lemma}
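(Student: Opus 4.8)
The plan is to compare the asserted identity with the expression for $\ov{\sigma}_{n+1}$ furnished by Proposition~\ref{cont nuestra}. The two formulas carry the double sum $\sum_{r=0}^n\sum_{l=0}^{n-r}\sigma^l_{r+l+1,n-r-l}$ verbatim, so the whole content of the lemma is that, in the first sum, every term with $l\ge 1$ dies against $\sigma^{-1}_{n+1}\xcirc\mu_n$. Thus I would fix $\bx\in X_{0n}$, set $w_0:=\sigma^{-1}_{n+1}(\mu_n(\bx))\in Y_{n+1}$, and reduce the lemma to proving
\[
\sigma^l_{l,n-l+1}(w_0)=0\qquad\text{for all }1\le l\le n+1 .
\]

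First I would record two structural facts about the row homotopy. From the explicit formula for $\sigma^0_{r+1,s}$ one checks at once the special-deformation-retract identity $\sigma^0\xcirc\sigma^0=0$: the output of $\sigma^0$ always ends in a tensor factor of the form $\gamma(h)$, whose $A$-component is $1\in K$, so a second application of $\sigma^0$ inserts $[1]=0$ in $\ov{A}$. Secondly, inspecting the recursive definition of the maps $d^l$ shows that, \emph{on reduced tensors} (those ending in $\ot 1$), every $d^m$ with $m\ge 1$ has the shape $\sigma^0\xcirc(\cdots)$, so its image lies in $\ima(\sigma^0)$; combined with the first fact this yields $\sigma^0\xcirc d^m(w)=0$ for every reduced $w$ and every $m\ge 1$.

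The crucial point is then that $w:=\sigma^0_{0,n+1}(w_0)$ is reduced. Indeed $\mu_n$ turns the rightmost $\ot_K$ into $\ot_A$, $\sigma^{-1}_{n+1}$ appends $\ot_A 1_E$, and $\sigma^0_{0,n+1}$ moves that $1_E$ from its $\ot_A$-slot into the $\ot_K$-slot, so $w$ ends in $\ot 1_E$. With this in hand I would prove by induction on $i$ that $\sigma^i_{i,n+1-i}(w_0)=0$ for $1\le i\le n+1$. Writing $\sigma^i_{i,n+1-i}(w_0)=-\sum_{j=0}^{i-1}\sigma^0\xcirc d^{i-j}\xcirc\sigma^j(w_0)$, the summand $j=0$ equals $\sigma^0\bigl(d^{i}(w)\bigr)$ and vanishes because $w$ is reduced and $i\ge 1$, while the summands $j\ge 1$ vanish by the inductive hypothesis. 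Taking $l=i$ completes the reduction, and hence the lemma.

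The step I expect to demand the most care is the interplay between reducedness and the $E$-bilinearity of the $d^l$. The factorization $d^m=\sigma^0\xcirc(\cdots)$ holds only on reduced tensors, because $d^m$ is extended to all of $X_{rs}$ by $E$-bilinearity whereas $\sigma^0$ is merely $(E,K)$-linear; consequently $\sigma^0\xcirc d^m$ does \emph{not} vanish in general, and the higher homotopies $\sigma^l$ are genuinely nonzero (which is why the companion double sum cannot be simplified the same way). The argument survives precisely because the particular element $\sigma^0_{0,n+1}(w_0)$ is reduced and this reducedness is fed back into the induction exactly where it is needed. Verifying carefully that $\sigma^0_{0,n+1}(w_0)$ is reduced for an arbitrary $\bx\in X_{0n}$, and not merely on the standard generators, is therefore the heart of the matter.
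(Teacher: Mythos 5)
Your proof is correct and follows essentially the same route as the paper's: both reduce the lemma to showing that $\sigma^l$ ($l\ge 1$) annihilates the image of $\sigma^{-1}_{n+1}\xcirc\mu_n$ (equivalently, the elements of $Y_{n+1}$ ending in $A$), and both run an induction on $l$ in which only the $j=0$ summand survives and dies because $d^l$ applied to elements ending in $K$ lands in $\ima(\sigma^0)$ while $\sigma^0\xcirc\sigma^0=0$. The paper phrases the induction as a minimal-counterexample argument and leaves the two structural facts (nilpotency of $\sigma^0$ and the factorization of $d^l$ through $\ima(\sigma^0)$ on reduced tensors) implicit, but the substance is identical.
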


\begin{proof} By the definition of $\mu$, $\sigma^{-1}$ and $\ov{\sigma}$ it suffices to prove
that
$$
\sigma^l(E\ot_A (E/A)^{\ot_{\!A}^{n+1}} \ot_A A) = 0\quad\text{for all $l\ge 1$.}
$$
Assume the result is false and let $l\ge 1$ be the minimal upper index for which the above
equality is wrong. Let $\bx\in E\ot_A (E/A)^{\ot_{\!A}^{n+1}} \ot_A A$. Then
$$
\sigma^l(\bx) = - \sum_{i=0}^{l-1} \sigma^0 \xcirc d^{l-i} \xcirc\sigma^i(\bx) = - \sigma^0
\xcirc d^l \xcirc\sigma^0(\bx).
$$
But, because $\sigma^0(\bx)\in E\ot_A (E/A)^{\ot_{\!A}^{n+1}}\ot K$, from the definition of
$d^l$ it follows that $d^l \xcirc\sigma^0(\bx)\in \ima(\si_0)$. Since $\sigma^0\xcirc\sigma^0 =
0$, this implies that $\sigma^l(\bx) = 0$, which contradicts the assumption.
\end{proof}

\begin{lemma}\label{leA.2} The contracting homotopy $\ov{\sigma}$ satisfies $\ov{\sigma}\xcirc
\ov{\sigma} = 0$.
\end{lemma}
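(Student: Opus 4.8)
The plan is to use the simplified expression for $\ov{\sigma}$ furnished by Lemma~\ref{leA.1},
$$
\ov{\sigma}_{n+1} = -\sigma^0_{0,n+1}\xcirc\sigma^{-1}_{n+1}\xcirc\mu_n + \sum_{r=0}^n\sum_{l=0}^{n-r}\sigma^l_{r+l+1,n-r-l},
$$
and to abbreviate its two summands as $S$ and $T$, so that $\ov{\sigma} = S+T$ and $\ov{\sigma}\xcirc\ov{\sigma} = S\xcirc S + S\xcirc T + T\xcirc S + T\xcirc T$. First I would record the elementary facts on which the argument rests. From the explicit formulas for $\sigma^{-1}$ and $\sigma^0$ one sees, by the usual normalization mechanism (a factor $1$ is pushed into a slot of $E/A$, respectively of $\ov{A}$, and therefore dies), that $\sigma^{-1}\xcirc\sigma^{-1} = 0$ and $\sigma^0\xcirc\sigma^0 = 0$. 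Moreover $\mu_n$ is the projection of $X_n$ onto $X_{0n}$ followed by the canonical map $X_{0n}\to Y_n$, so it vanishes on every $X_{rs}$ with $r\ge 1$ and satisfies $\mu_s\xcirc\sigma^0_{0s} = \ide_{Y_s}$; in particular $\mu\xcirc\sigma^l = 0$ for all $l\ge 1$, since $\ima(\sigma^l_{r+l+1,s-l})$ sits in a summand with first index $\ge 1$.

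The crux is the identity
$$
\sigma^l\xcirc\sigma^0 = 0\qquad\text{for all }l\ge 0,
$$
which I will call $(\ast)$, valid for every composable pair (both the $Y\to X$ and the $X\to X$ instances of $\sigma^0$). I would prove $(\ast)$ by induction on $l$: the case $l=0$ is $\sigma^0\xcirc\sigma^0 = 0$, and for the inductive step the recursive definition of $\sigma^l$ gives
$$
\sigma^l\xcirc\sigma^0 = -\sum_{i=0}^{l-1}\sigma^0\xcirc d^{l-i}\xcirc\bigl(\sigma^i\xcirc\sigma^0\bigr),
$$
where each inner factor $\sigma^i\xcirc\sigma^0$ vanishes by the inductive hypothesis. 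This is the same mechanism that drives Lemma~\ref{leA.1}, and it is the step where the bookkeeping---keeping track of exactly which instance of $\sigma^0$ is being precomposed, and of the bidegrees---needs the most care; once the indices are aligned the induction is immediate. I expect this to be the only genuine obstacle.

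With $(\ast)$ available the four pieces are killed directly. Both $S$ and every summand of $T$ factor on the left through $\sigma^0$ (for $T$ this is precisely the recursion defining $\sigma^l$, and for $S$ it is visible in its expression), while the image of $S$ lies in $X_{0,\bullet}$ and the image of $T$ lies in the summands of first index $\ge 1$. Hence $S\xcirc T = 0$, because $S$ begins on the right with $\mu$, which annihilates $\ima(T)$; and, writing $T = \sum_{l'}\sigma^{l'}$, both $T\xcirc S$ and $T\xcirc T$ vanish termwise by $(\ast)$, since in each case an outer $\sigma^{l'}$ is precomposed with a map factoring on the left through $\sigma^0$. Finally
$$
S\xcirc S = \sigma^0\xcirc\sigma^{-1}\xcirc\bigl(\mu\xcirc\sigma^0\bigr)\xcirc\sigma^{-1}\xcirc\mu = \sigma^0\xcirc\bigl(\sigma^{-1}\xcirc\sigma^{-1}\bigr)\xcirc\mu = 0,
$$
using $\mu\xcirc\sigma^0 = \ide$ and $\sigma^{-1}\xcirc\sigma^{-1} = 0$. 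Adding the four vanishing contributions gives $\ov{\sigma}\xcirc\ov{\sigma} = 0$, as claimed.
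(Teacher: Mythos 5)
Your proposal is correct and follows essentially the same route as the paper: both reduce the problem via Lemma~\ref{leA.1}, dispose of the $\mu$-term using $\mu\xcirc\sigma^0=\ide$ and $\sigma^{-1}\xcirc\sigma^{-1}=0$, and kill all compositions $\sigma^{l'}\xcirc\sigma^l$ by an induction on the upper index built on the recursive definition of $\sigma^l$ together with $\sigma^0\xcirc\sigma^0=0$. The only cosmetic difference is in packaging: the paper phrases the induction as a two-sided factorization $\sigma^l=\sigma^0\xcirc\gamma^l\xcirc\sigma^0$ for $l\ge 1$, whereas you prove the vanishing $\sigma^l\xcirc\sigma^0=0$ directly; given the recursion, the two statements are interchangeable.
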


\begin{proof} By Lemma~\ref{leA.1} it will be sufficient to see that $\sigma^0 \xcirc
\sigma^{-1}\xcirc \mu\xcirc \sigma^0\xcirc \sigma^{-1}\xcirc \mu = 0$ and $\sigma^l \xcirc
\sigma^{l'} = 0$ for all $l,l'\ge 0$. The first equality follows from the fact that $\mu\xcirc
\sigma^0 = \ide$ and $\sigma^{-1}\xcirc\sigma^{-1} = 0$. We now prove the last one. An
inductive argument shows that there exists a map $\gamma^l$ such that $\sigma^l =
\sigma^0\xcirc \gamma^l \xcirc\sigma^0$ for all $l\ge 1$. So $\sigma^{l'}\xcirc \sigma^l = 0$,
since clearly $\sigma^0\xcirc\sigma^0 = 0$.
\end{proof}

\begin{remark}\label{reA.3} The previous lemma implies that $\psi_n(\byy\ot 1) =
(-1)^n\ov{\sigma} \xcirc\psi(\byy)$ for all $n\ge 1$.
\end{remark}

Let $L_{rs}\subseteq U_{rs}$ be the $k$-submodules of $E\ot_A (E/A)^{\ot_{\!A}^s} \ot
\ov{A}^{\ot^r}\ot E$ generated by the simple tensors of the form
$$
1\ot_A \ov{\gamma(\bh_{1s})}\ot\ba_{1r}\ot 1\quad \text{and}\quad 1\ot_A \ov{\gamma(\bh_{1s})}
\ot \ba_{1r}\ot \gamma(h),
$$
respectively.

\smallskip

Note that under the identification $X_{rs}\simeq E\ot_k\ov{H}^{\ot_k^s}\ot\ov{A}^{\ot^r}\ot E$,
the subspaces and $L_{rs}$ and $U_{rs}$ of $X_{rs}$ correspond to $k\ot_k\ov{H}^{\ot_k^s}\ot
\ov{A}^{\ot^r}\ot k$ and $k\ot_k\ov{H}^{\ot_k^s}\ot \ov{A}^{\ot^r}\ot \mathcal{H}$,
respectively

\begin{lemma}\label{leA.4} It is true that $d^l(L_{rs}) \subseteq U_{r+l-1,s-l}$, for each
$l\ge 2$. Moreover
\begin{align*}
d^1(L_{rs}) & \subseteq EL_{r,s-1} + U_{r,s-1}
\end{align*}
\end{lemma}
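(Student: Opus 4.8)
The plan is to treat the two assertions separately: the $l=1$ statement I would read off directly from the explicit formula for $d^1$ in Theorem~\ref{Formula d^1}, while the statements for $l\ge 2$ I would obtain by a double induction, on $l$ and (inside) on $r$, built on the recursive definition $d^l=-\sum_j\sigma^0\xcirc d^{l-j}\xcirc d^j$ together with the $E$-bimodule linearity of the $d^l$.

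For $l=1$, evaluate $d^1$ on a generator $1\ot_A\ov{\gamma(\bh_{1s})}\ot\ba_{1r}\ot 1$ of $L_{rs}$ (so $h_0=1$, $\gamma(h_0)=1$). In the last summand the trailing factor is $\gamma(h_s^{(2)})\in\mathcal{H}$ and the leading factor is $\gamma(h_0)=1$, so it lies in $U_{r,s-1}$. In each term of the first sum two adjacent factors are merged into $\gamma(h_i)\gamma(h_{i+1})=f(h_i^{(1)},h_{i+1}^{(1)})\gamma(h_i^{(2)}h_{i+1}^{(2)})$; since the middle tensor factors are taken over $A$, the resulting scalar in $A$ (or, when $i=0$, the whole product) can be slid leftwards across the $\ot_A$'s, using $\gamma(h)a=a^{h^{(1)}}\gamma(h^{(2)})$, until it is absorbed into the leftmost slot, leaving a clean $\ov{H}$-word in the middle and trailing factor $1$. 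Hence every such term lands in $EL_{r,s-1}$, and altogether $d^1(L_{rs})\sub EL_{r,s-1}+U_{r,s-1}$.

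For $l\ge 2$ the induction rests on two properties of $\sigma^0$ read off from its definition: it never touches the leading factor, and, because the freshly created $\ov{A}$-entry is precisely the $A$-component of the old trailing factor, it annihilates every element whose trailing factor has all its $A$-coefficients in $K$ (in particular trailing factor in $K$, or a clean $\gamma(h)\in\mathcal{H}$), whereas on a trailing factor $a\gamma(h)$ with $a\notin K$ it moves $a$ into $\ov{A}$ and leaves the leading factor unchanged. Since the outermost operation in $d^l$ is a $\sigma^0$, every value of $d^l$ automatically has trailing factor in $\mathcal{H}$ and, after the sliding normalization, a clean middle. To control the leading factor I would reduce everything to inputs in $L$ by bilinearity: write $U_{ab}=L_{ab}\gamma(h)$ and $EL_{ab}=E\,L_{ab}$, and note $d^0(L_{rs})\sub EL_{r-1,s}+L_{r-1,s}+L_{r-1,s}\cdot A$ (the outer, interior and inner terms of the bar differential of $K\sub A$). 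In each summand $\sigma^0\xcirc d^{l-j}\xcirc d^j(\bx)$ the inner composite is then evaluated by the inductive hypotheses ($d^j(L)\sub U$ for $2\le j<l$, $d^1(L)\sub EL+U$, and $d^l(L_{r-1,s})\sub U$ by the inner induction on $r$ for the $j=0$ term) and right- or left-multiplied by the appropriate $\gamma(h)$ or $a$. One checks that every piece with leading factor outside $k$ carries a trailing factor in $K$ or a clean $\mathcal{H}$, and is killed by the outer $\sigma^0$, while the surviving pieces come from $U\gamma(h)$ or $U\cdot a$: their trailing factors acquire a genuine $A$-coefficient $f(\cdots)$ or $a^{h^{(1)}}\notin K$, and they land in $U_{r+l-1,s-l}$ with leading factor $1$. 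This closes the induction, the base $l=2$ following either from the explicit formula for $d^2$ or from the case $j=1$, $r=0$ of the same computation.

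The main obstacle is exactly the leading factor, which is \emph{not} preserved term by term: the first summand of $d^1$ and the outer term of $d^0$ each push an element of $A$ into the leftmost slot, so a naive count produces spurious contributions with leading factor in $E\setminus k$. The essential point is that such contributions always travel with a trailing factor whose $A$-coefficients lie in $K$, and are therefore annihilated by the outermost $\sigma^0$; the real work is keeping the three trailing types ($K$, $\mathcal{H}$, and $A\cdot\mathcal{H}$) straight through the double recursion and verifying that no leading-in-$E$ piece ever acquires a non-$K$ trailing coefficient before it meets a $\sigma^0$ — which is guaranteed because, after pulling out the left $E$-factor, $d^0$ is only ever applied to genuine $L$-type inputs.
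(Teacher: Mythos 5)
Your proposal is correct and follows essentially the same route as the paper's own proof: the $l=1$ inclusion read off from Theorem~\ref{Formula d^1}, then a double induction on $l$ and $r$ through the recursive definition $d^l=-\sum_j\sigma^0\xcirc d^{l-j}\xcirc d^j$, reduction to $L$-type inputs by $E$-bimodule linearity (writing $U_{ab}=L_{ab}\mathcal{H}$, $d^0(L_{rs})\sub AL_{r-1,s}+L_{r-1,s}A$), and the two key properties of $\sigma^0$ (it is left $E$-linear and it annihilates elements whose trailing factor has $A$-coefficients in $K$, while sending $U\gamma(h)$ and $U\cdot a$ into $U$). The only cosmetic difference is that the paper kills the term $\sigma^0\xcirc d^{l-1}(EL_{0,s-1})$ via $d^{l-1}(EL_{0,s-1})\sub\ima(\sigma^0)\sub\ker(\sigma^0)$, whereas you kill it by the same clean-trailing-factor observation you use everywhere else; both are valid.
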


\begin{proof} We proceed by induction on $l$ and $r$. For $l=1$ and $r\ge 0$, the result
follows immediately from Theorem~\ref{Formula d^1}. Assume that $s\ge l>1$, $r=0$ and that the
result for $l\ge 2$ is true for every $d^j_{r's'}$'s with arbitrary $r',s'$ and $j<l$. Let $\bx
= 1\ot_A \ov{\gamma(\bh_{1s})}\ot 1$. By the very definition of $d^l$, the above inclusion of
$d^1(L_{rs})$, and the inductive hypothesis \allowdisplaybreaks
\begin{align*}
d^l(\bx) & = - \sum_{j=1}^{l-1} \sigma^0\xcirc d^{l-j}\xcirc d^j(\bx)\\
&\in\sigma^0\xcirc d^{l-1}(E L_{0,s-1})+ \sum_{j=1}^{l-1}\sigma^0\xcirc d^{l-j} (U_{j-1,s-j})\\
& = \sum_{j=1}^{l-1} \sigma^0\xcirc d^{l-j}(U_{j-1,s-j}),
\end{align*}
where the last equality follows from the fact that
$$
\ima(\sigma^0)\subseteq \ker(\sigma^0)\quad\text{and}\quad d^{l-1}(EL_{0,s-1})\subseteq
\ima(\sigma^0),
$$
by the definition of $d^{l-1}$. Now, by the inductive hypothesis,
\begin{align*}
& d^{l-j}(U_{j-1,s-j})\subseteq L_{l-2,s-l}E\quad\text{for $l-j>1$}\\
\intertext{and}
& d^1(U_{l-2,s-l+1})\subseteq E U_{l-2,s-l} + L_{l-2,s-l}E.
\end{align*}
Thus, by the definition of $\sigma^0$, we have $d^l(\bx)\in U_{l-1,s-l}$. Suppose now that
$r>0$ and the result is true for all the $d^j_{r's'}$'s with arbitrary $r',s'$ and $j<l$, and
for all the $d^l_{r's'}$'s with arbitrary $s'$ and $r'<r$. Let $\bx = 1\ot_A
\ov{\gamma(\bh_{1s})}\ot \ba_{1r}\ot 1$. Arguing as above we see that
$$
d^l(\bx) \equiv - \sigma^0\xcirc d^l\xcirc d^0(\bx) \pmod{U_{r+l-1,s-l}}.
$$
Finally, by the definition of $d^0$ and the inductive hypothesis,
\begin{align*}
\sigma^0\xcirc d^l\xcirc d^0(\bx) &\in \sigma^0\xcirc d^l(AL_{r-1,s} + L_{r-1,s}A)\\
& \subseteq \sigma^0(AU_{r+l-2,s-l} + U_{r+l-2,s-l}A)\\
& \subseteq U_{r+l-1,s-l},
\end{align*}
which finishes the proof.
\end{proof}

\smallskip

We recursively define $\gamma(\bh_{1s}) * \ba_{1r}$ by
\begin{itemize}

\smallskip

\item $\gamma(\bh_{1s}) * \ba_{1r} = \ba_{1r}$ if $s=0$ and $\gamma(\bh_{1s}) * \ba_{1r} =
\gamma(\bh_{1s})$ if $r=0$,

\smallskip

\item If $r,s\ge 1$, then $\gamma(\bh_{1s}) * \ba_{1r} = \sum_{i=0}^r (-1)^i
\gamma(\bh_{1,s-1}) * \ba_{1i}^{h_s^{(1)}}\ot \gamma(h_s^{(2)})\ot \ba_{i+1,r}$.

\smallskip

\end{itemize}

Let $V_n$ be the $k$-submodule of $B_n(E)$ generated by the simple tensors $1\ot \bx_{1n}\ot 1$
such that $x_i\in A\cup \mathcal{H}$ for $1\le i\le n$.

\smallskip

Recall that $H\!\cdot\!\ima(f)$ denotes the minimal $k$-submodule of $A$ that includes
$\ima(f)$ and is closed under the weak action of $H$. We will denote by $C_n$ the
$E$-subbimodule of $E\ot \ov{E}^{\ot^n}\ot E$ generated by all the simple tensors $1\ot
x_1\ot\cdots\ot x_n\ot 1$ with some $x_i$ in $H\!\cdot\!\ima(f)$.

\begin{proposition}\label{propA.5} The map $\phi$ satisfies
$$
\phi(1\ot_A\ov{\gamma(\bh_{1i})}\ot\ba_{1,n-i}\ot 1) \equiv 1\ot\gamma(\bh_{1i})\!*
\ba_{1,n-i}\ot 1
$$
module $F^{i-1}(B_n(E))\cap V_n\cap C_n$.

\end{proposition}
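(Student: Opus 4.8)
The plan is to prove the statement by induction on $n$, exploiting the recursion $\phi_n = \xi_n\xcirc\phi_{n-1}\xcirc d_n$ from Proposition~\ref{homotopia} together with the explicit description of $d_n$ in Theorem~\ref{Formula d^1}. The base cases $n\le 1$ are immediate since $\phi_0=\ide$. The driving observation is that $\xi_n(\byy)=(-1)^n\byy\ot 1$ turns the right–hand $E$-factor of $\byy$ into a new bar entry; as bar entries live in $\ov E = E/K$ and $1\in K$, every summand of $\phi_{n-1}\xcirc d_n\bigl(1\ot_A\ov{\gamma(\bh_{1i})}\ot\ba_{1,n-i}\ot 1\bigr)$ whose right–hand $E$-factor equals $1$ is killed by $\xi_n$. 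This is what makes the computation tractable, and the first step is to isolate the few summands of $d_n$ that retain a nontrivial right–hand factor.

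Applying Theorem~\ref{Formula d^1} with $h_0=1$ (so $\gamma(h_0)=1$), $s=i$ and $r=n-i$, I would check that exactly two ``good'' summands survive: the last summand of $d^1$, namely $(-1)^i\,1\ot_A\ov{\gamma(\bh_{1,i-1})}\ot\ba_{1,n-i}^{h_i^{(1)}}\ot\gamma(h_i^{(2)})$, with right factor $\gamma(h_i^{(2)})$; and the top face of $d^0$, which absorbs $a_{n-i}$ into the right–hand factor. Every other summand of $d^0$ (the internal faces and the face merging $a_1$ into the last $\gamma$-slot) and every internal summand of $d^1$ (those carrying a product $\gamma(h_j)\gamma(h_{j+1})$) keeps the trailing $1$ and hence dies under $\xi_n$; the maps $d^l$ with $l\ge 2$ are handled as error terms. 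Note that when $r=0$ there is no $d^0$ and the single survivor $d^1$-last, fed through the induction, already rebuilds the pure tensor of $\gamma$'s.

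For the main term I would invoke the inductive hypothesis on the two survivors. Moving $\gamma(h_i^{(2)})$ through $\phi_{n-1}$ as a right $E$-action and applying the case $(n-1,i-1)$ yields, after $\xi_n$ and the sign identity $(-1)^{n+i}=(-1)^{n-i}$, exactly the $j=n-i$ term of $\gamma(\bh_{1i})*\ba_{1,n-i}$, i.e. $\gamma(h_i^{(2)})$ placed to the right of all the acted $a$'s. Applying the case $(n-1,i)$ to the top face of $d^0$ gives, after $\xi_n$, the element $1\ot\bigl(\gamma(\bh_{1i})*\ba_{1,n-i-1}\bigr)\ot a_{n-i}\ot 1$; appending $a_{n-i}$ reproduces precisely the terms $j=0,\dots,n-i-1$. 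Comparing with the recursive definition of $*$, these two contributions assemble into $1\ot\gamma(\bh_{1i})*\ba_{1,n-i}\ot 1$. The only real labor here is the face-by-face sign bookkeeping, which I would record explicitly.

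Finally, the error analysis. By Theorem~\ref{Formula d^1} and Lemma~\ref{leA.4}, each $d^l(1\ot_A\ov{\gamma(\bh_{1i})}\ot\ba_{1,n-i}\ot 1)$ with $l\ge 2$ lands in filtration $\le i-2$ inside $U$-type tensors (all entries in $A\cup\mathcal H$) carrying a factor in $f\langle h_1,\dots,h_i\rangle$. Since $\phi_{n-1}$ and $\xi_n$ preserve the filtration (the improvement of \cite[Proposition 1.2.2]{G-G} proved in this appendix), the cleanness condition $V$ and the $f$-condition $C$, these summands together with the error already carried by the inductive hypothesis for the two survivors all lie in $F^{i-1}(\B_n(E))\cap V_n\cap C_n$. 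I expect the main obstacle to be exactly this last bookkeeping: showing that $\phi$ and $\xi$ respect the three subspaces \emph{simultaneously}, not merely the filtration, which is what upgrades a plain ``modulo $F^{i-1}$'' statement to the sharp triple-intersection congruence needed in Lemma~\ref{le2.3}; the sign matching of the main term is the second, more routine, source of friction.
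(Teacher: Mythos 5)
Your proposal is correct and takes essentially the same route as the paper's own proof: induction on $n$ through the recursion $\phi_n=\xi_n\circ\phi_{n-1}\circ d_n$, identification of the same two surviving summands (the last face of $d^0$ and the last term of $d^1$, all trailing-$K$ summands dying under $\xi$ because $\phi_{n-1}$ keeps the last factor in $K$), disposal of the $d^l$, $l\ge 2$, contributions via Lemma~\ref{leA.4} and item~(2) of Theorem~\ref{Formula d^1} together with the inductive hypothesis, and reassembly through the recursive definition of $*$. The one loose phrase---that $\phi_{n-1}$ ``preserves'' $V$ and $C$ outright, rather than deducing this for the relevant $U$-type inputs from the inductive hypothesis and the $E$-bimodule property---is precisely the bookkeeping you flag, and it closes exactly as in the paper.
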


\begin{proof} We proceed by induction on $n$. Let $\bx = 1\ot_A \ov{\gamma(\bh_{1i})}\ot
\ba_{1,n-i}\ot 1$. By item~(2) of Theorem~\ref{Formula d^1}, the fact that $d^l(\bx) \in
U_{n-i+l-1,i-l}$ (by Lemma~\ref{leA.4}), and the inductive hypothesis
$$
\xi\xcirc\phi\xcirc d^l(\bx)\in F^{i-l+1}(B_n(E))\cap V_n\cap C_n \qquad\text{for all $l>1$,}
$$
So,
$$
\phi(\bx) \equiv \xi\xcirc\phi\xcirc d^0(\bx) + \xi\xcirc\phi\xcirc d^1(\bx)
\pmod{F^{i-1}(B_n(E))\cap V_n\cap C_n}.
$$
Moreover, by the definition of $d^0$ and Theorem~\ref{Formula d^1}
\begin{align*}
&\xi\xcirc\phi\xcirc d^0(\bx) = (-1)^n \xi\xcirc\phi(1\ot_A\ov{\gamma(\bh_{1i})}\ot
\ba_{1,n-i}),
\intertext{and}
&\xi\xcirc\phi\xcirc d^1(\bx) = (-1)^i \xi\xcirc \phi(1\ot_A\ov{\gamma(\bh_{1,i-1})}\ot
\ba_{1,n-i}^{h_i^{(1)}}\ot \gamma(h_i^{(2)})),
\end{align*}
since $\phi(EL_{n-s-1,s})\subseteq E\ot \ov{E}^{\ot^{n-1}}\ot K\subseteq \ker(\xi)$. The proof
can be now easily finished using the inductive hypothesis.
\end{proof}

In the sequel we let $J_n$ denote the $E$-subbimodule of $X_n$ generated by all the simple
tensors
$$
1\ot_A x_1\ot_A\cdots\ot_A x_s\ot a_1\ot\cdots\ot a_r \ot 1\qquad \text{($r+s = n$),}
$$
with some $a_i$ in the image of the cocycle $f$.

\begin{lemma}\label{leA.6} We have:

\begin{enumerate}

\smallskip

\item Let $\bx = 1\ot_A \ov{\gamma(\bh_{1i})}\ot \ba_{i+1,n}$. If $i<n$, then
$$
\ov{\sigma}(\bx) = \sigma^0(\bx) = (-1)^n \ot_A \ov{\gamma(\bh_{1i})}\ot \ba_{i+1,n}\ot 1.
$$

\smallskip

\item If $\bz = 1\ot_A \ov{\gamma(\bh_{1,i-1})}\ot\ba_{i,n-1}\ot a_n\gamma(h_n)$, then
$\sigma^l(\bz) \in U_{n-i+l+1,i-1-l}$ for $l\ge 0$ and $\sigma^l(\bz) \in J_n$ for $l\ge 1$.

\smallskip

\item If $\bz = 1\ot_A \ov{\gamma(\bh_{1,i-1})}\ot\ba_{i,n-1}\ot\gamma(h_n)$, then
$\sigma^l(\bz) = 0$ for $l\ge 0$.

\smallskip

\item If $\bz = 1\ot_A  \ov{\gamma(\bh_{1,i-1})}\ot\ba_{i,n-1}\ot a_n\gamma(h_n)$ and $i<n$,
then $\ov{\sigma}(\bz) \equiv \sigma^0(\bz)$, module $\bigoplus_{l=0}^{i-2} (U_{n-l,l}\cap
J_n)$.

\smallskip

\item If $\byy = 1\ot_A  \ov{\gamma(\bh_{1,n-1})}\ot a_n\gamma(h_n)$, then $\ov{\sigma}(\byy)
\equiv - \sigma^0\xcirc\sigma^{-1}\xcirc\mu(\byy) + \sigma^0(\byy)$, module
$\bigoplus_{l=0}^{n-2} (U_{n-l,l}\cap J_n)$.

\smallskip

\item If $\bz = 1\ot_A  \ov{\gamma(\bh_{1,n-1})}\ot \gamma(h_n)$, then $\ov{\sigma}(\bz) = -
\sigma^0\xcirc\sigma^{-1}\xcirc\mu(\bz)$.

\smallskip

\item If $\bz = 1\ot_A  \ov{\gamma(\bh_{1,i-1})}\ot\ba_{i,n-1}\ot \gamma(h_n)$ and $i<n$, then
$\ov{\sigma}(\bz) = 0$.

\end{enumerate}

\end{lemma}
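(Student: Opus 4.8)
The plan is to derive all seven items from the single expression for $\ov{\sigma}$ furnished by Lemma~\ref{leA.1}, namely $\ov{\sigma}_{n+1} = -\sigma^0_{0,n+1}\xcirc\sigma^{-1}_{n+1}\xcirc\mu_n + \sum_{r=0}^n\sum_{l=0}^{n-r}\sigma^l_{r+l+1,n-r-l}$, combined with the recursion $\sigma^l = -\sum_{i=0}^{l-1}\sigma^0\xcirc d^{l-i}\xcirc\sigma^i$, the explicit formulas for $\sigma^0$, $\sigma^{-1}$ and $\mu$, and the type inclusions of Lemma~\ref{leA.4}. Two structural facts drive the whole argument. First, $\sigma^0$ annihilates every element whose final $E$-factor is a pure $\gamma(h)$ (equivalently, whose ``$a$-part'' is $1$), since it would deposit the class of $1$ in an $\ov{A}$-slot. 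Second, exactly as in the proof of Lemma~\ref{leA.1}, whenever an argument has its last tensor factor in $K$ one has $d^l(\text{that argument})\in\ima(\sigma^0)$ for every $l\ge 1$, so that the outer $\sigma^0$ kills it because $\sigma^0\xcirc\sigma^0 = 0$.

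I would treat items (1) and (3) first, since they are pure vanishing statements. For (3), $\bz$ ends in $\gamma(h_n)$, so $\sigma^0(\bz)=0$ by the first remark; feeding this into the recursion and inducting on $l$ gives $\sigma^l(\bz)=0$ for all $l\ge 0$. For (1), the element $\sigma^0(\bx)$ ends in $1_E=\gamma(1)$, whose last factor lies in $K$; the second remark and an induction on $l$ (at each stage only the term $\sigma^0\xcirc d^l\xcirc\sigma^0$ survives, the others vanishing by the inductive hypothesis) yield $\sigma^l(\bx)=0$ for $l\ge 1$. It then remains to see that the $-\sigma^0\xcirc\sigma^{-1}\xcirc\mu$ contribution vanishes on $\bx$: if the number of $\ov{A}$-slots is positive then $\mu(\bx)=0$ because $\bx\notin X_{0,n-1}$, while in the remaining case the final factor $a_n$ lies in $A$ and so becomes zero in the $(E/A)$-slot created by $\sigma^{-1}$. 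Hence $\ov{\sigma}(\bx)=\sigma^0(\bx)$, with the sign $(-1)^n$ read off from the formula for $\sigma^0$.

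The core of the work, and where I expect the main difficulty, is item (2). The inclusion $\sigma^l(\bz)\in U_{n-i+l+1,i-1-l}$ I would prove by induction on $l$, substituting $\sigma^j(\bz)$ into the recursion and using Lemma~\ref{leA.4} (which, as its proof shows, also controls $d^l$ on the $U_{rs}$) to track the bidegree and the fact that the surviving terms end in $\mathcal{H}$. The delicate point is $\sigma^l(\bz)\in J_n$ for $l\ge 1$, i.e.\ that a genuine cocycle value appears in an $\ov{A}$-slot. The mechanism is the following: writing $\sigma^0(\bz)=m\cdot\gamma(h_n)$ with $m$ ending in $1_E$, the bimodule map $d^1$ produces, via Theorem~\ref{Formula d^1}, terms in which adjacent factors fuse as $\gamma(h_p)\gamma(h_{p+1}) = f(h_p^{(1)},h_{p+1}^{(1)})\gamma(h_p^{(2)}h_{p+1}^{(2)})$; those $f$-values sit in $(E/A)$-slots, but every such term has a pure $\gamma$ as final factor and is therefore killed by the outer $\sigma^0$. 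The only surviving contribution comes from the last summand of $d^1(m)$, whose final factor, after right multiplication by $\gamma(h_n)$, becomes $f(\cdot,\cdot)\gamma(\cdot)$; now $\sigma^0$ transports this cocycle value into an $\ov{A}$-slot, placing the result in $U\cap J_n$. For $l\ge 2$ the same bookkeeping, together with the explicit $f$ occurring in $d^2$, propagates $J_n$-membership through the recursion.

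Finally, items (4)--(7) I would obtain by substituting (1)--(3) into the formula of Lemma~\ref{leA.1} and matching indices. In each case the term $-\sigma^0\xcirc\sigma^{-1}\xcirc\mu$ is nonzero precisely when there are no $\ov{A}$-slots, so that $\mu$ does not vanish; this happens for the elements $\byy$ and $\bz$ of (5) and (6) and is absent in (4) and (7), where a positive number of $\ov{A}$-slots forces $\mu=0$. By (2), the sum $\sum_{l\ge 1}\sigma^l$ contributes only terms in $U_{n-\lambda,\lambda}\cap J_n$ with $\lambda=i-1-l$ running through $\{0,\dots,i-2\}$ (respectively $\{0,\dots,n-2\}$ when $i=n$), which is exactly the error term allowed in (4) and (5); in (6) and (7) the element ends in $\gamma(h_n)$, so (3) makes every $\sigma^l$ vanish and the stated exact equalities follow. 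The whole proof is thus a synthesis of the two structural remarks with the type-tracking of Lemma~\ref{leA.4}, the only genuine subtlety being the cocycle-into-$\ov{A}$-slot phenomenon of item~(2).
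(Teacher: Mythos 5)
Your proposal is correct and follows essentially the same route as the paper's own proof: induction on $l$ through the recursion for $\sigma^l$, the two vanishing mechanisms ($\sigma^0$ annihilating pure-$\gamma$ tails, and $d^l(\text{argument ending in } K)\in\ima(\sigma^0)$ combined with $\sigma^0\xcirc\sigma^0=0$), Lemma~\ref{leA.4} for the bidegree tracking in item~(2), the cocycle-value-into-$\ov{A}$-slot mechanism from Theorem~\ref{Formula d^1} for the $J_n$-membership, and substitution of items~(1)--(3) into the formula of Lemma~\ref{leA.1} (with the $\mu$-term surviving exactly when there are no $\ov{A}$-slots) for items~(4)--(7). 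The only cosmetic difference is the order in which the items are treated.
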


\begin{proof} The first assertion improves item~(b) of the proof of \cite[Proposition
1.2.2]{G-G}. We first claim that if $l\ge 1$, then $\sigma^l(\bx) = 0$. We proceed by induction
on $l$. By the recursive definition of $\sigma^l$ and the inductive hypothesis
$$
\sigma^l(\bx) = -\sum_{i=0}^{l-1} \sigma^0\xcirc d^{l-i}\xcirc \sigma^i(\bx)  = -
\sigma^0\xcirc d^l\xcirc \sigma^0(\bx) = (-1)^{n-1} \sigma^0\xcirc d^l (\bx\ot 1).
$$
In order to finish the proof of the claim it suffices to note that $\sigma^0\xcirc \sigma^0 =
0$ and that, by the very definition, $d^l(\bx\ot 1)\in \ima(\sigma^0)$. When $i<n-1$ item~(1)
follows clearly from the claim. When $i=n-1$ it is necessary to see also that $\sigma^l\xcirc
\sigma^{-1}\xcirc \mu(\bx) = 0$, which is immediate, since $\sigma^{-1}\xcirc \mu(\bx) = 0$ by
the definitions of $\mu$ and $\sigma^{-1}$. We next prove the first part of item~(2). By
definition this is clear for $\sigma^0$. Assume the result is valid for $\sigma^i$ with $i<l$.
Then, by Lemma~\ref{leA.4},
\begin{align*}
\sigma^l(\bz) & = - \sum_{j=0}^{l-1} \sigma^0 \xcirc d^{l-j}\xcirc \sigma^j(\bz)\\
& \subseteq \sum_{j=0}^{l-1} \sigma^0 \xcirc d^{l-j}(U_{n-i+j+1,i-1-j})\\
& \subseteq \sigma^0(EU_{n-i+l,i-1-l})+ \sigma^0(U_{n-i+l,i-1-l}E)\\
& = U_{n-i+l+1,i-1-l},
\end{align*}
as desired. We now prove the second part. By Theorem~\ref{Formula d^1}, the recursive
definition of $\sigma^l$ and the definition of $\sigma^0$, we know that
$$
\sigma^l(\bz) = - \sum_{j=0}^{l-1} \sigma^0 \xcirc d^{l-j}\xcirc \sigma^j(\bz)\equiv  -
\sigma^0 \xcirc d^1\xcirc \sigma^{l-1}(\bz)\pmod{J_n}.
$$
Since $\sigma^0 \xcirc d^1\xcirc \sigma^{l-1}(\bz)\in \sigma^0 \xcirc d^1(U_{n-i+l,i-l})$, in
order to finish the proof it suffices to see that $\sigma^0 \xcirc d^1(U_{n-i+l,i-l})\subseteq
J_n$, which is a direct consequence of Theorem~\ref{Formula d^1} and the definition of
$\sigma^0$. Item~(3) follows immediately by induction on $l$. Items~(4) and~(5) follow easily
from the definition of $\ov{\sigma}$, item~(2) and Lemma~\ref{leA.1}. Finally, items~(6)
and~(7) follow from the definition of $\ov{\sigma}$, item~(3) and Lemma~\ref{leA.1}.
\end{proof}

Let $V'_n$ be the $k$-submodule of $E\ot \ov{E}^{\ot^n}\ot E$ generated by the simple tensors
$1\ot\bx_{1n}\ot 1$ such that $\#(\{j:x_j\notin A\cup \mathcal{H}\})\le 1$ (Note that $V_n
\subseteq V'_n$).

\begin{proposition}\label{propA.7} Let $R_i = F^i(B_n(E))\setminus F^{i-1}(B_n(E))$. The
following equalities hold:

\begin{enumerate}

\smallskip

\item $\psi(1\ot\gamma(\bh_{1i})\ot\ba_{i+1,n}\ot 1) = 1\ot_A\ov{\gamma(\bh_{1i})}\ot
\ba_{i+1,n} \ot 1$.

\smallskip

\item If $\bx = 1\ot \bx_{1n}\ot 1\in R_i\cap V_n$ and there exists $1\le j\le i$ such that
$x_j\in A$, then $\psi(\bx) = 0$.

\smallskip

\item If $\bx = 1\ot \gamma(\bh_{1,i-1})\ot a_i\gamma(h_i)\ot\ba_{i+1,n} \ot 1$, then
\begin{align*}
\qquad\qquad \psi(\bx) & \equiv 1\ot_A \ov{\gamma(\bh_{1,i-1})}\ot_A a_i\gamma(h_i)
\ot\ba_{i+1,n} \ot 1\\
&+ 1\ot_A \ov{\gamma(\bh_{1,i-1})}\ot a_i\ot \ba_{i+1,n}^{h_i^{(1)}}\ot \gamma(h_i^{(2)}),
\end{align*}
module $\bigoplus_{l=0}^{i-2} (U_{n-l,l}\cap J_n)$.

\smallskip

\item If $\bx = 1\ot \gamma(\bh_{1,j-1})\ot a_j\gamma(h_j)\ot\gamma(\bh_{j+1,i})\ot\ba_{i+1,n}
\ot 1$ with $j<i$, then
$$
\qquad\qquad \psi(\bx)\equiv 1\ot_A \ov{\gamma(\bh_{1,j-1})}\ot_A a_j\gamma(h_j)\ot_A
\ov{\gamma(\bh_{j+1,i})} \ot \ba_{i+1,n} \ot 1,
$$
module $\bigoplus_{l=0}^{i-2} (U_{n-l,l}\cap J_n)$.

\smallskip

\item If $\bx = 1\ot \gamma(\bh_{1,i-1}) \ot \ba_{i,j-1}\ot a_j\gamma(h_j) \ot\ba_{j+1,n} \ot 1$
with $j>i$, then
$$
\qquad\qquad\psi(\bx)\equiv 1\ot_A \ov{\gamma(\bh_{1,i-1})}\ot \ba_{ij}\ot
\ba_{j+1,n}^{h_j^{(1)}} \ot\gamma(h_j^{(2)}),
$$
module $\bigoplus_{l=0}^{i-2} (U_{n-l,l}\cap J_n)$.

\smallskip

\item If $\bx = 1\ot \bx_{1n}\ot 1\in R_i\cap V'_n$ and there exists $1\le j_1 < j_2\le i$
such that $x_{j_1}\in A$ and $x_{j_2}\in \mathcal{H}$, then $\psi(\bx) = 0$.

\end{enumerate}

\end{proposition}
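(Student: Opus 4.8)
The plan is to establish the six assertions simultaneously by induction on $n$, the engine being Remark~\ref{reA.3}. Writing the generator $1\ot\bx_{1n}\ot 1$ as $\byy\ot 1$ with $\byy=1\ot\bx_{1n}$ and using that $\psi$ is a map of $E$-bimodules, that remark gives
$$
\psi(1\ot\bx_{1n}\ot 1)=(-1)^n\,\ov{\sigma}\bigl(\psi(1\ot\bx_{1,n-1}\ot 1)\cdot x_n\bigr),
$$
so $\psi$ in degree $n$ is obtained from $\psi$ in degree $n-1$ by right-multiplying the last $E$-slot by $x_n$ and then applying the contracting homotopy $\ov{\sigma}$. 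Thus the whole argument is a bookkeeping of how $\ov{\sigma}$ acts, and Lemma~\ref{leA.6} is exactly the list of values of $\ov{\sigma}$ (and of the intermediate $\sigma^l$) that we need. The base cases ($\psi_0=\ide$, small $n$) are immediate.

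For the inductive step I would split into cases according to the nature of the last entry $x_n$. If $x_n\in A$, then after right-multiplication the element ends in a block of $\ov{A}$-factors, item~(1) of Lemma~\ref{leA.6} applies, and $\ov{\sigma}=\sigma^0$ simply slides the last factor into the $\ov{A}$-block; together with the sign $(-1)^n$ this yields assertion~(1) and the first summands of assertions~(3)--(5). If $x_n=a_n\gamma(h_n)$ or $x_n=\gamma(h_n)$, I would first use the commutation rule $\gamma(h)a=a^{h^{(1)}}\gamma(h^{(2)})$ to bring the newly multiplied rightmost slot into the standard form $a\gamma(h)$, respectively $\gamma(h)$, and then invoke item~(2), (4) or~(5), respectively item~(3), (6) or~(7), of Lemma~\ref{leA.6}. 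The ``splitting'' of $\ov{\sigma}$ on an element ending in $a_n\gamma(h_n)$ recorded in Lemma~\ref{leA.6}(4)/(5) --- one branch keeping $\gamma(h_n)$ attached, the other peeling $\gamma(h_n^{(2)})$ off to the far right while twisting the remaining $A$-entries by $h_n^{(1)}$ --- is precisely what produces the two summands of assertions~(3) and~(5).

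The vanishing statements~(2) and~(6) are handled in the same induction, but drawing on the vanishing clauses: items~(3) and~(7) of Lemma~\ref{leA.6} give $\sigma^l=0$ and $\ov{\sigma}=0$ on an element whose rightmost slot is a bare $\gamma(h)$ sitting to the right of a nonempty $\ov{A}$-block, while $\sigma^0\xcirc\sigma^0=0$ (Lemma~\ref{leA.2}) kills the surviving $\sigma^0$-terms. Concretely, an $A$-factor interleaved among the first $i$ positions (assertion~(2)), or an $A$-factor followed by an $\mathcal{H}$-factor (assertion~(6)), forces, after the reduction above and an application of the formula assertions~(4)/(5) at degree $n-1$, an input on which the relevant homotopy vanishes.

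Finally, the error terms. For assertions~(3)--(5) the congruence is modulo $\bigoplus_{l=0}^{i-2}(U_{n-l,l}\cap J_n)$, and these corrections are exactly the contributions of the higher homotopies $\sigma^l$ with $l\ge 1$ in the expansion of $\ov{\sigma}$ from Lemma~\ref{leA.1}: by Lemma~\ref{leA.6}(2) each such $\sigma^l$ lands in $U_{n-i+l+1,\,i-1-l}\cap J_n$, whose second index runs through $i-2,i-3,\dots,0$ as $l\ge 1$, so the total correction lies in $\bigoplus_{l=0}^{i-2}(U_{n-l,l}\cap J_n)$ as claimed. The main obstacle is not any single computation but the discipline of the simultaneous induction: one must organize the six assertions so that the terms produced in expanding one of them are governed by the inductive hypothesis of the appropriate other assertion (typically a term in~(3) feeds into~(1), (4) or~(5), and the vanishing in~(2)/(6) feeds on the formulas~(4)/(5)) with no circularity, all while keeping the lengthy Sweedler and sign bookkeeping under control and confining every stray term to the advertised submodule.
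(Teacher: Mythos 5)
You follow the paper's own route: a simultaneous induction on $n$ driven by Remark~\ref{reA.3} (peel off the last slot, right-multiply, apply $\ov{\sigma}$), with Lemma~\ref{leA.6} supplying the values of $\ov{\sigma}$ and of the intermediate $\sigma^l$, and with the error submodule $\bigoplus_{l=0}^{i-2}(U_{n-l,l}\cap J_n)$ produced exactly as you describe: from the expansion of $\ov{\sigma}$ in Lemma~\ref{leA.1} together with Lemma~\ref{leA.6}(2), since $U_{n-i+l+1,i-1-l}=U_{n-l',l'}$ with $l'=i-1-l$ running through $i-2,\dots,0$. Your treatment of assertions (1) and (3)--(5), including the two-summand splitting coming from Lemma~\ref{leA.6}(4)/(5) and the case split on the nature of $x_n$, is the paper's argument.

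The one wire you have wrong is the vanishing pair (2)/(6): you propose to obtain it from ``the formula assertions (4)/(5) at degree $n-1$,'' and that step would fail. The inputs arising in the induction for assertion (2) lie in $V_{n-1}$ (every slot in $A\cup\mathcal{H}$), and those for assertion (6) lie in $V'_{n-1}$ with an $A$-slot preceding an $\mathcal{H}$-slot; the hypotheses of (4)/(5) instead require a slot $a_j\gamma(h_j)$ genuinely outside $A\cup\mathcal{H}$, so they are not applicable, and in any case (4)/(5) are only congruences modulo $\bigoplus_{l=0}^{i-2}(U_{n-l,l}\cap J_n)$, which can never yield the exact vanishing $\psi(\bx)=0$ claimed in (2)/(6). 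The paper's wiring is different: for (2), if the offending $A$-before-$\mathcal{H}$ pattern already occurs among $x_1,\dots,x_{n-1}$ one invokes the inductive hypothesis of (2) itself; otherwise $\bx_{1n}=\gamma(\bh_{1,i-1})\ot\ba_{i,n-1}\ot\gamma(h_n)$, so assertion (1) at degree $n-1$ gives $\psi(1\ot\bx_{1n})=1\ot_A\ov{\gamma(\bh_{1,i-1})}\ot\ba_{i,n-1}\ot\gamma(h_n)$, which Lemma~\ref{leA.6}(7) sends to zero under $\ov{\sigma}$; and (6) then follows from assertion (2) or from the inductive hypothesis of (6), according as $x_n\notin A\cup\mathcal{H}$ or $x_n\in A\cup\mathcal{H}$. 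With this local correction of the dependency graph, your plan coincides with the paper's proof.
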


\begin{proof} 1) We proceed by induction on $n$. The case $n=0$ is trivial. Suppose $n>0$ and
the result is valid for $n-1$. Assume first that $i<n$. By Remark~\ref{reA.3} and the inductive
hypothesis,
\begin{align*}
\psi(1\ot\gamma(\bh_{1i})\ot\ba_{i+1,n}\ot 1) & = (-1)^n \ov{\sigma}\xcirc \psi(1\ot
\gamma(\bh_{1i}) \ot\ba_{i+1,n})\\
& = (-1)^n \ov{\sigma}(1\ot_A\ov{\gamma(\bh_{1i})}\ot\ba_{i+1,n}),
\end{align*}
and the result follows from item~(1) of Lemma~\ref{leA.6}. Assume now that $i=n$. By
Remark~\ref{reA.3}, the inductive hypothesis and item~(6) of Lemma~\ref{leA.6},
\begin{align*}
\psi(1\ot\gamma(\bh_{1n})\ot 1) &= (-1)^n \ov{\sigma}\xcirc \psi(1\ot\gamma(\bh_{1n}))\\
& = (-1)^{n+1} \sigma^0\xcirc \sigma^{-1}\xcirc \mu(1\ot_A\ov{\gamma(\bh_{1,n-1})}\ot
\gamma(h_n)).
\end{align*}
The result follows now immediately from the definitions of $\mu$, $\sigma^{-1}$ and $\sigma^0$.

\smallskip

\noindent 2) We proceed by induction on $n$. Assume first that there exist $j_1<j_2<n$ such
that $x_{j_1}\in A$ and $x_{j_2}\in \mathcal{H}$. By Remark~\ref{reA.3} and the inductive
hypothesis,
$$
\psi(\bx)= (-1)^n \ov{\sigma}\xcirc \psi(1\ot\bx_{1n}) = (-1)^n \ov{\sigma}(0) = 0.
$$
Assume now that $\bx_{1n} = \gamma(\bh_{1,i-1})\ot \ba_{i,n-1}\ot \gamma(h_n)$. By
Remark~\ref{reA.3} and item~(1),
$$
\psi(\bx) = (-1)^n \ov{\sigma}\xcirc \psi(1\ot\bx_{1n}) = (-1)^n \ov{\sigma}(1\ot_A
\ov{\gamma(\bh_{1,i-1})}\ot \ba_{i,n-1}\ot \gamma(h_n)),
$$
and the result follows from item~(7) of Lemma~\ref{leA.6}.

\smallskip

\noindent 3) We proceed by induction on $n$. Assume first that $i<n$. Let
\begin{align*}
\byy & = 1\ot_A \ov{\gamma(\bh_{1,i-1})}\ot_A a_i\gamma(h_i)\ot\ba_{i+1,n},\\
\bz & = 1\ot_A \ov{\gamma(\bh_{1,i-1})}\ot a_i\ot \ba_{i+1,n-1}^{h_i^{(1)}} \ot
\gamma(h_i^{(2)})a_n.
\end{align*}
By Remark~\ref{reA.3} and the inductive hypothesis,
$$
\psi(\bx) = (-1)^n \ov{\sigma}\xcirc \psi(1\ot \gamma(\bh_{1,i-1})\ot a_i\gamma(h_i)
\ot\ba_{i+1,n}) \equiv (-1)^n \ov{\sigma}(\byy+\bz),
$$
module $\ov{\sigma}\bigl(\bigoplus_{l=0}^{i-2} (U_{n-1-l,l}\cap J_{n-1})A\bigr)$. So, by
items~(1) and~(4) of Lemma~\ref{leA.6},
$$
\psi(\bx) \equiv (-1)^n \sigma^0(\byy+\bz),
$$
module $\bigoplus_{l=0}^{i-2} (U_{n-l,l}\cap J_n)+\sigma^0 \bigl(\bigoplus_{l=0}^{i-2}
(U_{n-1-l,l}\cap J_{n-1})A\bigr)$. Using the definition of $\sigma^0$ we obtain immediately the
desired expression for $\psi(\bx)$. Assume now that $i=n$. Let
$$
\byy   = 1\ot\gamma(\bh_{1,n-1})\ot a_n\gamma(h_n)\quad\text{and}\quad \bz  = 1\ot_A
\ov{\gamma(\bh_{1,n-1})}\ot a_n\gamma(h_n).
$$
By Remark~\ref{reA.3}, item~(1) of the present proposition and item~(5) of Lemma~\ref{leA.6},
$$
\psi(\bx)= (-1)^n \ov{\sigma}\xcirc \psi(\byy) = (-1)^n \ov{\sigma}(\bz)\equiv (-1)^{n+1}
\sigma^0\xcirc \sigma^{-1}\xcirc \mu(\bz) + (-1)^n \sigma^0(\bz) ,
$$
module $\bigoplus_{l=0}^{n-2} (U_{n-l,l}\cap J_n)$. The established formula for $\psi(\bx)$
follows now easily from the definitions of $\mu$, $\sigma^{-1}$ and $\sigma^0$.

\smallskip

\noindent 4) We proceed by induction on $n$. When $i< n$ the same argument that in item~(3)
works. Assume now that $j<i-1$ and $i=n$. Let
\begin{align*}
& \byy  = 1\ot \gamma(\bh_{1,j-1})\ot a_j\gamma(h_j)\ot\gamma(\bh_{j+1,n}),\\
&\bz  = 1\ot_A \ov{\gamma(\bh_{1,j-1})}\ot_A a_j\gamma(h_j)\ot_A\ov{\gamma(\bh_{j+1,n-1})}\ot
\gamma(h_n).
\end{align*}
By Remark~\ref{reA.3} and the inductive hypothesis,
$$
\psi(\bx)= (-1)^n \ov{\sigma}\xcirc \psi(\byy) \equiv (-1)^n \ov{\sigma}(\bz) ,
$$
module $\ov{\sigma}\bigl(\bigoplus_{l=0}^{n-3} (U_{n-1-l,l}\cap J_{n-1})E\bigr)$. So, by
items~(4) and~(6) of Lemma~\ref{leA.6},
$$
\psi(\bx)\equiv (-1)^{n+1} \sigma^0\xcirc \sigma^{-1}\xcirc \mu(\bz),
$$
module $\bigoplus_{l=0}^{n-4} (U_{n-l,l}\cap J_n)+\sigma^0\bigl(\bigoplus_{l=0}^{n-3}
(U_{n-1-l,l}\cap J_{n-1})E\bigr)$. The formula for $\psi(\bx)$ follows now easily from the
definitions of $\mu$, $\sigma^{-1}$ and $\sigma^0$. Assume finally that $j = i-1$ and $i=n$.
Let
\begin{align*}
& \byy  = 1\ot_A \ov{\gamma(\bh_{1,n-2})}\ot_A a_{n-1}\gamma(h_{n-1})\ot\gamma(h_n),\\
&\bz = 1\ot_A \ov{\gamma(\bh_{1,n-2})}\ot a_{n-1}\ot\gamma(h_{n-1}) \gamma(h_n).
\end{align*}
By Remark~\ref{reA.3} and item~(3),
$$
\psi(\bx)= (-1)^n \ov{\sigma}\xcirc \psi(1\ot \gamma(\bh_{1,n-2})\ot a_{n-1}\gamma(h_{n-1})
\ot\gamma(h_n)) \equiv (-1)^n \ov{\sigma}(\byy+\bz) ,
$$
module $\ov{\sigma}\bigl(\bigoplus_{l=0}^{n-3} (U_{n-1-l,l}\cap J_{n-1})E\bigr)$. So, by the
fact that $\sigma^0(\bz)\in U_{2,n-2}\cap J_n$, and items~(4) and~(6) of Lemma~\ref{leA.6},
$$
\psi(\bx)\equiv (-1)^{n+1} \sigma^0\xcirc \sigma^{-1}\xcirc \mu(\byy),
$$
module $\bigoplus_{l=0}^{n-2} (U_{n-l,l}\cap J_n)+\sigma^0\bigl(\bigoplus_{l=0}^{n-3}
(U_{n-1-l,l}\cap J_{n-1})E\bigr)$. The formula for $\psi(\bx)$ follows now easily from the
definitions of $\mu$, $\sigma^{-1}$ and $\sigma^0$.

\smallskip

\noindent 5) We proceed by induction on $n$. Let
\begin{align*}
& \byy = 1\ot \gamma(\bh_{1,i-1}) \ot \ba_{i,j-1}\ot a_j\gamma(h_j) \ot \ba_{j+1,n},\\
& \bz = 1\ot_A \ov{\gamma(\bh_{1,i-1})}\ot\ba_{ij}\ot \ba_{j+1,n-1}^{h_j^{(1)}}
\ot\gamma(h_j^{(2)})a_n.
\end{align*}
By Remark~\ref{reA.3} and item~(1) or the inductive hypothesis (depending on $j=n$ or $j<n$),
$$
\psi(\bx)= (-1)^n \ov{\sigma}\xcirc \psi(\byy) \equiv (-1)^n \ov{\sigma}(\bz),
$$
module $\ov{\sigma}\bigl(\bigoplus_{l=0}^{i-2} (U_{n-l-1,l}\cap J_{n-1})A\bigr)$. Thus, by
item~(4) of Lemma~\ref{leA.6},
$$
\psi(\bx)= (-1)^n \sigma^0\xcirc \psi(\byy) \equiv (-1)^n\sigma^0(\bz),
$$
module $\bigoplus_{l=0}^{i-2} (U_{n-l,l}\cap J_n) + \sigma^0\bigl(\bigoplus_{l=0}^{i-2}
(U_{n-l-1,l}\cap J_{n-1})A\bigr)$. The result is obtained now immediately using the definition
of $\sigma^0$.

\smallskip

\noindent 6) We proceed by induction on $n$. By Remark~\ref{reA.3} and item~(2) or the
inductive hypothesis (depending on $x_n\notin A\cup \mathcal{H}$ or $x_n\in A\cup
\mathcal{H}$),
$$
\psi(\bx)= (-1)^n \ov{\sigma}\xcirc \psi(1\ot\bx_{1n}) = (-1)^n \ov{\sigma}(0) = 0,
$$
as desired.
\end{proof}

\begin{lemma}\label{leA.8} Let $R_i = F^i(B_n(E)) \setminus F^{i-1}(B_n(E))$. The following
equalities hold:

\begin{enumerate}

\smallskip

\item $\phi\xcirc \psi\bigl(1\ot\gamma(\bh_{1i})\ot \ba_{1,n-i}\ot 1\bigr) \equiv
1\ot\gamma(\bh_{1i})*\ba_{1,n-i}\ot 1$ module $F^{i-1}(B_n(E))\cap V_n$.

\smallskip

\item If $\bx = 1\ot\bx_{1n}\ot 1\in R_i\cap V_n$ and there exists $1\le j\le i$ such that
$x_i\in A$, then $\phi\xcirc \psi(\bx) = 0$.

\smallskip

\item If $\bx = 1\ot \gamma(\bh_{1,i-1})\ot a_i\gamma(h_i)\ot\ba_{i+1,n} \ot 1$, then
\begin{align*}
\qquad\qquad \phi\xcirc\psi(\bx) & \equiv a_i^{\bh_{1,i-1}^{(1)}}\ot \Bigl(
\gamma(\bh_{1,i-1}^{(2)}) \ot\gamma(h_i)\Bigr)*\ba_{i+1,n} \ot 1 \\
&+ 1\ot\gamma(\bh_{1,i-1})*\Bigl(a_i\ot \ba_{i+1,n}^{h_i^{(1)}}\Bigr)\ot \gamma(h_i^{(2)}),
\end{align*}
module $F^{i-1}(B_n(E))\cap AV_n + F^{i-2}(B_n(E))\cap V_n\mathcal{H}$.

\smallskip

\item If $\bx = 1\ot \gamma(\bh_{1,j-1})\ot a_j\gamma(h_j)\ot\gamma(\bh_{j+1,i})\ot\ba_{i+1,n}
\ot 1$ with $j<i$, then
$$
\qquad\qquad \phi\xcirc\psi(\bx)\equiv a_j^{\bh_{1,j-1}^{(1)}}\ot
\Bigl(\gamma(\bh_{1,j-1}^{(2)}) \ot \gamma(\bh_{ji})\Bigr)* \ba_{i+1,n}\ot 1,
$$
module $F^{i-1}(B_n(E))\cap AV_n + F^{i-2}(B_n(E))\cap V_n\mathcal{H}$.

\smallskip

\item If $\bx = 1\ot \bx_{1n}\ot 1\in R_i\cap V'_n$ and there exists $1\le j\le i$ such that
$x_j\in A$, then $\phi\xcirc \psi(\bx)\in F^{i-1}(B_n(E))\cap V_n\mathcal{H}$.

\end{enumerate}

\end{lemma}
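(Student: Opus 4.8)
The plan is to obtain each of the five items by substituting the formulas for $\psi$ of Proposition~\ref{propA.7} into the formula for $\phi$ of Proposition~\ref{propA.5}, using throughout that $\phi$ is a morphism of $E$-bimodule complexes. Items~(1) and~(2) are immediate. For~(1), Proposition~\ref{propA.7}(1) gives $\psi(1\ot\gamma(\bh_{1i})\ot\ba_{1,n-i}\ot 1)=1\ot_A\ov{\gamma(\bh_{1i})}\ot\ba_{1,n-i}\ot 1$, and then Proposition~\ref{propA.5} yields the claimed expression modulo $F^{i-1}(B_n(E))\cap V_n\cap C_n$, which is contained in $F^{i-1}(B_n(E))\cap V_n$. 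For~(2), Proposition~\ref{propA.7}(2) gives $\psi(\bx)=0$, hence $\phi\circ\psi(\bx)=0$.

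For items~(3) and~(4) the terms produced by $\psi$ are not yet in the shape $1\ot_A\ov{\gamma(\bg_{1i})}\ot\bc\ot 1$ required by Proposition~\ref{propA.5}: an interior factor $a_j\gamma(h_j)$ carries an element of $A$, and in the second summand of~(3) there is a factor $\gamma(h_i^{(2)})\in\mathcal{H}$ on the right. First I would slide each such $a_j$ into the leftmost $E$-slot by iterating the commutation rule $\gamma(h)a=a^{h^{(1)}}\gamma(h^{(2)})$ inside the balanced tensor product over $A$; this rewrites, for instance, the first summand of Proposition~\ref{propA.7}(3) as $a_i^{\bh_{1,i-1}^{(1)}}\ot_A\ov{\gamma(\bh_{1,i-1}^{(2)})}\ot_A\gamma(h_i)\ot\ba_{i+1,n}\ot 1$, and is exactly the source of the leading factor $a_j^{\bh^{(1)}}$ in the stated formulas. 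Because $\phi$ is $E$-linear on both sides, I would then pull the resulting left $A$-factor and the right factor $\gamma(h_i^{(2)})$ out of $\phi$, apply Proposition~\ref{propA.5} to the remaining pure-$\gamma$ tensor, and reassemble to obtain the displayed main terms in terms of the $*$-product.

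The delicate part, and the main obstacle, is controlling the residues. One must show that the residue $\bigoplus_{l=0}^{i-2}(U_{n-l,l}\cap J_n)$ inherited from Proposition~\ref{propA.7} together with the residues $F^{\bullet}(B_n(E))\cap V_n\cap C_n$ inherited from Proposition~\ref{propA.5} all land in $F^{i-1}(B_n(E))\cap AV_n+F^{i-2}(B_n(E))\cap V_n\mathcal{H}$. Here I would use that $\phi$ preserves the filtration by \cite[Proposition 1.2.2]{G-G}; that every generator of $U_{rs}$ ends in a factor $\gamma(h)\in\mathcal{H}$, so that by right $E$-linearity and Proposition~\ref{propA.5} its image under $\phi$ lies in $V_n\mathcal{H}$; and that left multiplication of a class of $V_n$ by $a_j^{\bh^{(1)}}\in A$ produces an element of $AV_n$, while right multiplication by $\gamma(h_i^{(2)})\in\mathcal{H}$ produces an element of $V_n\mathcal{H}$. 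Matching the filtration degrees (namely $l\le i-2$ for the inherited residue, and $i-1$ respectively $i-2$ for the two main terms) with these membership statements is the bookkeeping that occupies the bulk of the argument, and the interplay of the families $U$, $J$, $C$, $V$, $V'$, $AV_n$ and $V_n\mathcal{H}$ is where all the care is required.

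Finally, item~(5) is a case analysis on the position of the distinguished $A$-factor among the first $i$ slots. If $\bx\in V_n$, it is already covered by item~(2), so $\phi\circ\psi(\bx)=0$. Otherwise $\bx\in V'_n\setminus V_n$ has a unique genuinely mixed factor; whenever an $A$-factor precedes an $\mathcal{H}$-factor within the first $i$ positions, Proposition~\ref{propA.7}(6) gives $\psi(\bx)=0$, and in the remaining (degenerate) configurations I would run the same $\psi$-then-$\phi$ computation as above, again via $E$-bilinearity and the relevant formulas of Proposition~\ref{propA.7} and Proposition~\ref{propA.5}. In every case the presence of an $A$-factor among the first $i$ slots lowers the filtration degree to $i-1$ and leaves a factor of $\mathcal{H}$ on the right, so that $\phi\circ\psi(\bx)\in F^{i-1}(B_n(E))\cap V_n\mathcal{H}$, as required.
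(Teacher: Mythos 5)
Your proposal is correct and takes essentially the same approach as the paper: items (1) and (2) are read off from Propositions~\ref{propA.7} and~\ref{propA.5}, and item (3) (likewise (4)) is obtained exactly as you describe --- rewriting the first term of Proposition~\ref{propA.7}(3) with $a_i$ slid to the leftmost slot through $\ot_A$ as $a_i^{\bh_{1,i-1}^{(1)}}\ot_A\ov{\gamma(\bh_{1,i-1}^{(2)})}\ot_A\gamma(h_i)\ot\cdots$, pulling the left $A$-factor and the right $\mathcal{H}$-factor out of $\phi$ by $E$-bilinearity, applying Proposition~\ref{propA.5}, and absorbing the inherited residues $\phi\bigl(\bigoplus_{l=0}^{i-2}U_{n-l,l}\bigr)$ into $F^{i-1}(B_n(E))\cap AV_n+F^{i-2}(B_n(E))\cap V_n\mathcal{H}$. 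The paper writes this out only for items (1)--(3) and leaves (4) and (5) to the reader, so your case analysis for (5) (via Proposition~\ref{propA.7}, items (2), (5) and (6)) supplies detail at least as complete as the paper's own proof.
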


\begin{proof} Item~(1) follows from item~(1) of Proposition~\ref{propA.7} and
Proposition~\ref{propA.5}, and item~(2) follows from item~(2) of Proposition~\ref{propA.7}. We
next prove item~(3). By item~(3) of Proposition~\ref{propA.7},
\begin{align*}
\phi\xcirc \psi(\bx) & \equiv \phi\bigl(a_i^{\bh_{1,i-1}^{(1)}}\ot_A
\ov{\gamma(\bh_{1,i-1}^{(2)})}\ot_A \gamma(h_i)\ot\ba_{i+1,n} \ot 1\bigr)\\
& + \phi\bigl(1\ot_A \ov{\gamma(\bh_{1,i-1})}\ot a_i\ot \ba_{i+1,n}^{h_i^{(1)}}\ot
\gamma(h_i^{(2)}) \bigr),
\end{align*}
module $\phi\bigl(\bigoplus_{l=0}^{i-2} U_{n-l,l}\bigr)$. So, by Proposition~\ref{propA.5}
\begin{align*}
\phi\xcirc \psi(\bx) & \equiv a_i^{\bh_{1,i-1}^{(1)}}\ot \Bigl(\gamma(\bh_{1,i-1}^{(2)})\ot
\gamma(h_i)\Bigr)*\ba_{i+1,n} \ot 1 \\
& + 1\ot\gamma(\bh_{1,i-1})*\Bigl(a_i\ot \ba_{i+1,n}^{h_i^{(1)}}\Bigr)\ot \gamma(h_i^{(2)}),
\end{align*}
module $F^{i-1}(B_n(E))\cap AV_n + F^{i-2}(B_n(E))\cap V_n\mathcal{H}$. We leave the task to
prove items~(4) and~(5) to the reader.
\end{proof}

\begin{proposition}\label{propA.9} Let $R_i = F^i(B_n(E)) \setminus F^{i-1}(B_n(E))$.  If $\bx
= 1\ot\bx_{1n}\ot 1\in R_i\cap V'_n$, then $\omega(\bx) \in F^i(B_{n+1}(E))\cap V_{n+1}$.
\end{proposition}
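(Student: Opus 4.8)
The plan is to argue by induction on $n$, using the recursion for $\omega$ from Proposition~\ref{homotopia} in the sharpened form supplied by the Remark after it, namely
$$
\omega(\bx) = \xi\xcirc\phi\xcirc\psi(\bx) - (-1)^n\,\xi\xcirc\omega(\bx_{0n}),
$$
for $\bx = \bx_{0n}\ot 1 = 1\ot\bx_{1n}\ot 1$, where $\bx_{0n} = 1\ot\bx_{1,n-1}\ot x_n$ has right $E$-factor $x_n$. The base case $n=0$ is immediate since $\omega_1 = 0$. The first thing I would record is how $\xi$ interacts with the relevant submodules: because $\xi_{n+1}(\bz) = (-1)^n\bz\ot 1$ promotes the right $E$-factor of $\bz$ to a new bar factor and inserts $1$, it preserves the filtration degree, it annihilates every term whose right $E$-factor lies in $K$ (in particular the $\ide$-term has already been absorbed into the identity above, and $\xi$ kills the submodule $AV_n$), and it sends $V_n\mathcal{H}$ into $V_{n+1}$. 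Thus the whole problem reduces to controlling the two contributions $\xi\xcirc\phi\xcirc\psi(\bx)$ and $\xi\xcirc\omega(\bx_{0n})$.

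Since the filtration half of the claim is already known (\cite[Proposition~1.2.2]{G-G}), the entire content is membership in $V_{n+1}$. For the first contribution I would feed $\bx\in V'_n$ into Lemma~\ref{leA.8} together with Proposition~\ref{propA.7}: in each case these express $\phi\xcirc\psi(\bx)$, modulo pieces of strictly smaller filtration degree of the same shape, as a sum of terms lying in $V_n$ or $AV_n$ (right $E$-factor $1$, hence killed by $\xi$) and terms lying in $V_n\mathcal{H}$ (right $E$-factor in $\mathcal{H}$, hence carried by $\xi$ into $V_{n+1}$); and whenever an $A$-factor precedes an $\mathcal{H}$-factor among the first slots, Proposition~\ref{propA.7}(6) makes $\psi$, and therefore this whole contribution, vanish. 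Hence $\xi\xcirc\phi\xcirc\psi(\bx)\in F^i(\B_{n+1}(E))\cap V_{n+1}$.

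The heart of the argument is the second contribution. Here the sharpened recursion is decisive: of all the faces of $b'(\bx)$, every one except the last ends in the right $E$-factor $1\in K$, and on such elements $\omega$ takes values ending in $K$, which $\xi$ annihilates. This is exactly why the recursion collapses to the single term $\xi\xcirc\omega(\bx_{0n})$, and it is the only place where the bad factor of $\bx$ can persist. When $x_n\in A\cup\mathcal{H}$ the situation is benign: the underlying configuration $1\ot\bx_{1,n-1}\ot 1$ still lies in $V'_{n-1}$, the inductive hypothesis applies, and the extra good factor $x_n$ passes through $\xi$ landing in $V_{n+1}$.

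The main obstacle, where I expect essentially all the work to lie, is the case in which the unique factor of $\bx$ outside $A\cup\mathcal{H}$ sits in the last slot $x_n$. Then $\bx_{0n}$ carries a non-trivial, and in fact bad, right $E$-factor, so that $\xi\xcirc\omega(\bx_{0n})$ threatens to create a bad bar factor of its own, and the inductive hypothesis (phrased for elements with outer factor $1$) does not apply verbatim. The plan is to show that this dangerous term cancels exactly against the $V_n\mathcal{H}$-part of $\xi\xcirc\phi\xcirc\psi(\bx)$ isolated in the second paragraph, the matching being precisely the one made explicit by the formulas of Lemma~\ref{leA.8}(3)--(5). Carrying out this cancellation while keeping the filtration degree under control is the technical core; once it is established, assembling the two contributions yields $\omega(\bx)\in F^i(\B_{n+1}(E))\cap V_{n+1}$ and closes the induction.
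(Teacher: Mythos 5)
Your skeleton coincides with the paper's proof: induction on $n$ via the Remark's recursion, the split of $\omega(\bx)$ into $\xi\xcirc\phi\xcirc\psi(\bx)$ and $\pm\,\xi\xcirc\omega(1\ot\bx_{1n})$, control of the first term by items (3)--(5) of Lemma~\ref{leA.8}, and a case analysis on $x_n$ for the second; your handling of the first term and of the benign cases $x_n\in A\cup\mathcal{H}$ (inductive hypothesis on $1\ot\bx_{1,n-1}\ot 1\in V'_{n-1}$, right $E$-linearity of $\omega$, then $\xi$) is correct. The gap is your final paragraph. The idea you are missing is a preliminary claim, proved first by its own short induction: if \emph{all} the factors of $1\ot\bx_{1m}\ot 1$ lie in $A\cup\mathcal{H}$, i.e.\ the element lies in $R_j\cap V_m$, then $\omega(1\ot\bx_{1m}\ot 1)=0$. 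Indeed, on such elements the recursion collapses to $\omega=\xi\xcirc\phi\xcirc\psi$ (the $\omega$-term vanishes by the claim's own inductive hypothesis and right $E$-linearity), and items (1) and (2) of Lemma~\ref{leA.8} place $\phi\xcirc\psi$ of such an element inside $V_m$, which is killed by $\xi$ since its right $E$-factor is $1\in K$.

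With this claim the dangerous case is immediate, and no cancellation occurs or is needed. If the unique factor of $\bx$ outside $A\cup\mathcal{H}$ is $x_n$, then $1\ot\bx_{1,n-1}\ot 1\in V_{n-1}$, so by right $E$-linearity
$$
\omega(1\ot\bx_{1n}) = \omega\bigl((1\ot\bx_{1,n-1}\ot 1)\,x_n\bigr) = \omega(1\ot\bx_{1,n-1}\ot 1)\,x_n = 0,
$$
whence $\omega(\bx)=\xi\xcirc\phi\xcirc\psi(\bx)$, which your second paragraph already controls. Your proposed mechanism would in fact fail: the $V_n\mathcal{H}$-part of $\xi\xcirc\phi\xcirc\psi(\bx)$ genuinely survives into $\omega(\bx)$ --- it lands in $V_{n+1}$, which is precisely what the statement permits --- so there is nothing for the (actually zero) second term to cancel against. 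Since you left exactly this case as an unexecuted plan, and the plan points at the wrong mechanism, the proposal as it stands does not prove the proposition; adding the vanishing claim above closes it.
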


\begin{proof} We first claim that if $\bx = 1\ot\bx_{1n}\ot 1\in R_i\cap V_n$, then
$\omega(\bx)=0$. For $n=1$ this is immediate, since $\omega_1 = 0$ by definition. Assume that
$n>1$ and the claim holds for $n-1$. Then,
$$
\omega(\bx) = \xi\bigl(\phi\xcirc \psi(\bx) - (-1)^n \omega(1\ot\bx_{1n})\bigr) = \xi\xcirc
\phi\xcirc \psi(\bx) = 0,
$$
where the last equality follows from the facts that $\phi\xcirc \psi(\bx)\in V_n$ (by items~(1)
and~(2) of Lemma~\ref{leA.8}) and $V_n\subseteq \ker(\xi)$. We now prove the proposition by
induction on $n$. This is trivial for $n=1$ since $w_1 = 0$. Assume that $n>1$ and the
proposition is true for $n-1$. Let $\bx = 1\ot\bx_{1n}\ot 1\in R_i\cap V'_n$. Since
$$
\omega(\bx) = \xi\bigl(\phi\xcirc \psi(\bx) - (-1)^n \omega(1\ot \bx_{1n})\bigr),
$$
and, by items~(3), (4) and~(5) of Lemma~\ref{leA.8},
$$
\xi\xcirc \phi\xcirc \psi(\bx) \in F^i(B_{n+1}(E))\cap V_{n+1},
$$
in order to finish the proof it suffices to check that
\begin{equation*}
\xi\xcirc\omega(1\ot \bx_{1n})\in F^i(B_{n+1}(E))\cap V_{n+1}.
\end{equation*}
By the inductive hypothesis and the claim,

\begin{itemize}

\smallskip

\item If $x_n\in A$, then $\omega(1\ot \bx_{1n})\in F^i(B_n(E))\cap V_nA$,

\smallskip

\item If $x_n\in \mathcal{H}$, then $\omega(1\ot \bx_{1n})\in F^{i-1}(B_n(E))\cap V_n
\mathcal{H}$,

\smallskip

\item If $x_n\notin A\cup \mathcal{H}$, then $\omega(1\ot \bx_{1n})=0$.

\smallskip

\end{itemize}
In all these cases the required inclusion is true.
\end{proof}

\noindent{\bf Proofs of Propositions~\ref{prop2.1}, \ref{prop2.2} and~\ref{prop2.5}.}\enspace
They follow immediately from Propositions~\ref{propA.5}, \ref{propA.9} and~\ref{propA.7},
respectively.\qed

\end{document}